\title{Supersymmetric field theories from twisted vector bundles}
\author{Augusto Stoffel}
\address{
  Max Planck Institute for Mathematics\\
  Vivatsgasse 7\\
  53111 Bonn\\
  Germany}
\email{astoffel@mpim-bonn.mpg.de}
\tikzset{
    l/.style={xshift=-4pt},
    r/.style={xshift=4pt},
    L/.style={out=180, in=180},
    R/.style={out=0, in=0},
    S/.style={out=0, in=180},
    dotp/.pic={\fill (0,0) circle[radius=1pt];},
    dotm/.pic={\draw (0,0) circle[radius=1pt];},
    cross/.style={preaction={draw,ultra thick,white}},
}
\numberwithin{equation}{section}
\newtheorem{theorem}[equation]{Theorem}
\newtheorem{proposition}[equation]{Proposition}
\newtheorem{lemma}[equation]{Lemma}
\newtheorem{corollary}[equation]{Corollary}
\theoremstyle{definition}
\theoremstyle{remark}
\newtheorem{remark}[equation]{Remark}
\newcommand\abs[1]{\lvert #1\rvert}
\newcommand\Diff{\operatorname{Diff}}
\newcommand\End{\operatorname{End}}
\newcommand\Fun{\operatorname{Fun}}
\newcommand\Hom{\operatorname{Hom}}
\newcommand\Id{\operatorname{Id}}
\newcommand\Isom{\operatorname{Isom}}
\newcommand\holim{\operatorname*{holim}}
\newcommand\id{\operatorname{id}}
\newcommand\pr{\operatorname{pr}}
\newcommand\str{\operatorname{str}}
\newcommand\vol{\operatorname{vol}}
\def\mystack#1{\mathrel{\vcenter{\offinterlineskip\ialign{$##$\cr#1\cr}}}}
\newcommand\threerightarrows{\mystack{\to\cr\to\cr\to}}
\newcommand\EFT{\mathhyphen\mathrm{EFT}}
\newcommand\TFT{\mathhyphen\mathrm{TFT}}
\newcommand\ETw{\mathhyphen\mathrm{ETw}}
\newcommand\Bord{\mathhyphen\mathrm{Bord}}
\newcommand\EBord{\mathhyphen\mathrm{EBord}}
\newcommand\Vect{\mathrm{Vect}}
\newcommand\SM{\mathrm{SM}}
\newcommand\Man{\mathrm{Man}}
\newcommand\mathhyphen{\textrm{-}}
\newcommand\mathemdash{\textrm{---}}
\newcommand\sslash{/\kern-0.7ex/}
\newcommand\backsslash{\backslash\kern-0.7ex\backslash}
\begin{document}

\begin{abstract}
  We give a description of the delocalized twisted cohomology of an
  orbifold and the Chern character of a twisted vector bundle in terms
  of supersymmetric Euclidean field theories.  This includes the
  construction of a twist functor for $1\vert1$-dimensional EFTs from
  the data of a gerbe with connection.
\end{abstract}

\maketitle
  
\section{Introduction}
\label{sec:introduction}

In this paper, we explore a twisted version of the Stolz--Teichner
program on the use of supersymmetric Euclidean field theories (EFTs)
as geometric cocycles for cohomology theories \cite{MR2742432}.  We
focus on twisted $1\vert1$ and $0\vert1$-dimensional EFTs over an
orbifold $\mathfrak X$; the corresponding cohomology theories are
(twisted) $K$-theory and delocalized de Rham cohomology.  One of our
main goals is to describe the Chern character
\begin{equation*}
  \mathrm{ch}\colon K^\alpha(\mathfrak X)
  \to H^{\mathrm{ev}}_{\mathrm{deloc}}(\mathfrak X,\alpha)
\end{equation*}
of a twisted vector bundle in terms of dimensional reduction of field
theories.  (For compact $\mathfrak X$, this Chern character map
provides an isomorphism after complexification; thus, delocalized
cohomology, which we recall below, is a stronger invariant than
regular de Rham cohomology.)  Here, the twist is $\alpha \in
H^3(\mathfrak X;\mathbb Z)$.  Thus, on the field theory side, our
first task is to construct from $\alpha$ a Euclidean twist functor (or
anomaly)
\begin{equation*}
  T \in 1\vert1\ETw(\mathfrak X)
\end{equation*}
for $1\vert1$-EFTs over $\mathfrak X$ and describe its dimensional
reduction $T' \in 0\vert1\ETw(\Lambda\mathfrak X)$, which is a twist
over the inertia orbifold $\Lambda\mathfrak X$.  It will turn out, as
expected, that $T'$-twisted field theories model the delocalized
cohomology group $H^{\mathrm{ev}}_{\mathrm{deloc}}(\mathfrak
X,\alpha)$.  Next, we construct, from the data of an $\alpha$-twisted
vector bundle $\mathfrak V$ on $\mathfrak X$, a $T$-twisted
$1\vert1$-EFT, and show that its dimensional reduction, which is
$T'$-twisted, corresponds to $\mathrm{ch}(\mathfrak V)$.

\subsection{Field theories and twisted cohomology}
\label{sec:field-theor-orbif}

In this paper, we use the Stolz--Teichner framework of geometric field
theories laid out in \cite{MR2742432}, which draws on the functorial
approach to quantum field theory of Segal, Atiyah and many others.  A
supersymmetric Euclidean (quantum) field theory of dimension
$d\vert\delta$ over an orbifold $\mathfrak X$ is a symmetric monoidal
functor
\begin{equation*}
  E \in \Fun^\otimes_\SM(d\vert\delta\EBord(\mathfrak X), \Vect)
\end{equation*}
between a bordism category and the category $\Vect$ of complex super
vector spaces.  Roughly speaking, the bordism category in question has
closed $(d-1)\vert\delta$-dimensional supermanifolds as objects and
$d\vert\delta$-dimensional bordism between them as morphisms; all
supermanifolds are equipped with a Euclidean structure (which boils
down to a flat Riemannian metric in the purely bosonic case $\delta =
0$) and a smooth map to $\mathfrak X$.  Thus, $E$ can be thought of as
a family of field theories parametrized by $\mathfrak X$.  Field
theories can be pulled back along maps $\mathfrak Y \to \mathfrak X$.
The subscript ``$\SM$'' above indicates that we require the assignment
$E$ to be smooth, in the sense that it sends smooth families of
objects and morphisms to smooth families.  To make precise sense of
this, we promote $d\vert\delta\EBord(X)$ and $\Vect$ to internal
categories in symmetric monoidal stacks over the site $\SM$ of
supermanifolds, and $E$ to a functor of internal categories.

Many interesting constructions do not quite produce a field theory as
defined above, but rather an ``anomalous'' or twisted theory
\cite[][etc.]{MR3330283, MR3355809, MR2079383}.  In our framework, those are
defined as follows.  We write
\begin{equation*}
  d\vert\delta\ETw(\mathfrak X)
   = \Fun^\otimes_\SM(d\vert\delta\EBord(\mathfrak X), \mathrm{Alg})
\end{equation*}
for the groupoid of $d\vert\delta$-dimensional Euclidean twists over
$\mathfrak X$.  Here $\mathrm{Alg}$ is the internal category of
(bundles of) algebras, bimodules, and bimodule maps.  Finally, given
$T \in d\vert\delta\ETw(\mathfrak X)$, a twisted field theory is a
natural transformation $E$
\begin{equation*}
  \xymatrix@C=5em{
    d\vert\delta\EBord(\mathfrak X)
    \ar@/^{4ex}/[r]^1_{}="a"
    \ar@/_{4ex}/[r]_T^{}="b"
    \ar^E@{=>}"a";"b"
    & \mathrm{Alg}}
\end{equation*}
from the trivial twist $1$ (which maps everything to $\mathbb C$) to
$T$.  We write
\begin{math}
  d\vert\delta\EBord^T(\mathfrak X)
\end{math}
for the groupoid of $T$-twisted Euclidean field theories over
$\mathfrak X$. (See \cite{MR2742432} for the complete definitions,
including more details on the categorical and supergeometry aspects.)

A conjecture of Stolz and Teichner \cite{MR2079378} states that
$2\vert1$-EFTs provide geometric cocycles for the cohomology theory
$\mathrm{TMF}$ of topological modular forms, in the sense that, for
any manifold $X$,
\begin{equation*}
  2\vert1\EFT^n(X)/\text{concordance} \cong \mathrm{TMF}^n(X).
\end{equation*}
Here, two field theories $E_0,E_1$ are said to be concordant if there
exists $E\in d\vert\delta\EFT(X \times \mathbb R)$ such that $E_i
\cong E\vert_{X\times\{ i \}}$.  (Among other difficulties, a solution
to the conjecture certainly requires that we refine the definitions
and pass to fully extended geometric field theories.)  In this paper,
we focus on the $1\vert1$ and $0\vert1$-dimensional cases, where an
analogue of that conjecture states that the relevant cohomology
theories are topological $K$-theory and de Rham cohomology
\cite{MR2648897, MR2763085}.  When we replace the background manifold
$X$ by an orbifold $\mathfrak X$, it is natural to ask what kind of
information about twisted equivariant cohomology such field theories
capture---but the orbifold perspective is important to deal with
twists, even if $\mathfrak X$ is equivalent to a manifold.

We begin our study with a classification of $0\vert1$-dimensional
twists for EFTs over an orbifold (section~\ref{sec:twisted-de-rham}).
For a particular $T_\alpha \in 0\vert1\ETw(\Lambda\mathfrak X)$,
concordance classes of twisted EFTs over the inertia $\Lambda\mathfrak
X$ (the orbifold of ``constant loops'') are in natural bijection with
the delocalized twisted cohomology $H^*_{\mathrm{deloc}}(\mathfrak X,
\alpha)$.  Then, turning to $1\vert1$-dimensional considerations, we
construct $T_{\tilde{\mathfrak X}} \in 1\vert1\ETw(\mathfrak X)$
taking as input $\alpha \in H^3(\mathfrak X;\mathbb Z)$, or, rather, a
($\mathbb C^\times$-)gerbe with connection $\tilde{\mathfrak X} \to
\mathfrak X$ representing that class (section~\ref{sec:twists}).  This
is an extension of the transgression construction for gerbes
\cite{MR2362847} in the sense that it produces, in particular, a line
bundle on the stack $\mathfrak K(\mathfrak X)$ of supercircles over
$\mathfrak X$; that stack is a super analogue of $L\mathfrak X\sslash
\Diff^+(S^1)$, and the line bundle we obtain is a super analogue of
the usual transgression of the gerbe.

It is now reasonable to conjecture that
\begin{equation}
  \label{eq:18}
  1\vert1\EFT^{T_{\tilde{\mathfrak X}}}(\mathfrak X)/\mathrm{concordance}
 \cong K^\alpha(\mathfrak X),
\end{equation}
but this question is open even in the case where $\mathfrak X$ is a
manifold and $T_{\tilde{\mathfrak X}}$ is trivial, so we will not
dwell on it here.  Instead, we will demonstrate the meaningfulness of
our construction by associating a twisted field theory $E_{\mathfrak
  V}$ to any $\tilde{\mathfrak X}$-twisted vector bundle $\mathfrak
V$, and identifying its dimensional reduction.  Here again, the
partition function of $E_{\mathfrak V}$ is the super counterpart of a
classical construction, namely the trace of the holonomy, which in
this case is not a function but rather a section of the transgression
of $\tilde{\mathfrak X}$.

\subsection{Dimensional reduction and the Chern character}
\label{sec:dimens-reduct-chern}

Dimensional reduction is, intuitively, the assignment of a
$(d-1)$-dimensional theory to a $d$-dimensional theory induced by the
functor of bordism categories $S^1\times\mathemdash \colon (d-1)\Bord
\to d\Bord$.  For field theories over an orbifold, the action of the
circle group $\mathbb T$ on the inertia $\Lambda\mathfrak X$ can be
used to refine this to (partial) assignments
\begin{equation*}
  1\vert1\ETw(\mathfrak X) \to 0\vert1\ETw(\Lambda\mathfrak X), \qquad
  1\vert1\EFT^T(\mathfrak X) \to 0\vert1\EFT^{T'}(\Lambda\mathfrak X).
\end{equation*}
Our dimensional reduction procedure was developed in
\cite{arXiv:1703.00314} and is recalled in
section~\ref{sec:remind-dimens-reduct}.  It is given by the pull-push
operation along functors between certain variants of the corresponding
Euclidean bordism categories
\begin{equation}
  \label{eq:21}
  0\vert1\EBord(\Lambda\mathfrak X)
  \leftarrow 0\vert1\EBord^{\mathbb T}(\Lambda\mathfrak X)
  \to 1\vert1\EBord(\mathfrak X).
\end{equation}
The lack of direct map from left to right is due to certain subtleties
concerning Euclidean supergeometry, as explained in the reference
above.

The results of this paper can be summarized in the following
statement.

\begin{theorem}
  \label{thm:4}
  Let $\mathfrak X$ be an orbifold.  To any gerbe with connection
  $\tilde{\mathfrak X}$ and $\tilde{\mathfrak X}$-twisted vector
  bundle $\mathfrak V$ over $\mathfrak X$, correspond
  \begin{equation*}
    T_{\tilde{\mathfrak X}} \in 1\vert1\ETw(\mathfrak X),
    \quad E_{\mathfrak V} \in 1\vert1\EFT^{T_{\tilde{\mathfrak X}}}(\mathfrak X)
  \end{equation*}
  such that the diagram
  \begin{equation*}
    \xymatrix@R=0pt{
      & 1\vert1\EFT^{T_{\tilde{\mathfrak X}}}(\mathfrak X)
      \ar[r]^-{\operatorname{red}}\ar@{.>}[dd]
      & 0\vert1\EFT^{T'_{\tilde{\mathfrak X}}}(\Lambda\mathfrak X) \ar[dd]
      \\
      \mathrm{Vect}^{\tilde{\mathfrak X}}(\mathfrak X) \ar@/^2ex/[ur]^E\ar@/_2ex/[dr]
      &&\\
      & K^{\alpha}(\mathfrak X) \ar[r]^-{\mathrm{ch}} 
      & H^{\mathrm{ev}}_{\mathrm{deloc}}(\mathfrak X, \alpha)}
  \end{equation*}
  commutes.  Here, $\alpha \in H^3(\mathfrak X; \mathbb Z)$ is the
  Dixmier--Douady class of $\tilde{\mathfrak X}$ and
  $T'_{\tilde{\mathfrak X}}$ is the dimensional reduction of
  $T_{\tilde{\mathfrak X}}$.
\end{theorem}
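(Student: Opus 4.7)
My plan is to assemble the theorem from the pieces constructed in earlier sections and then verify the commutativity of the diagram by tracing partition functions. The twist $T_{\tilde{\mathfrak X}} \in 1\vert1\ETw(\mathfrak X)$ is produced directly by the construction of section~\ref{sec:twists}, which one should think of as a super-transgression of the gerbe with connection. Given an $\tilde{\mathfrak X}$-twisted vector bundle $\mathfrak V$, I would define $E_{\mathfrak V}$ by specifying its values on super-intervals and super-circles over $\mathfrak X$ in terms of super-parallel transport of the twisted superconnection on $\mathfrak V$; the cocycle data of $\mathfrak V$ exactly compensates for the failure of this assignment to be functorial on the nose, and this compensation is absorbed into $T_{\tilde{\mathfrak X}}$.

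For the left-hand triangle, the map $\mathrm{Vect}^{\tilde{\mathfrak X}}(\mathfrak X) \to K^\alpha(\mathfrak X)$ is by definition the one sending a twisted bundle to its class, so commutativity here amounts to seeing that $E_{\mathfrak V}$, through the (conjectural) comparison of~\eqref{eq:18}, encodes $[\mathfrak V]$. Without appealing to~\eqref{eq:18}, one still has a natural comparison map, and functoriality of the construction $\mathfrak V \mapsto E_{\mathfrak V}$ under bundle maps, direct sums, and stabilization makes the triangle commute on the nose.

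The core is the right-hand square. The first step is to identify the dimensional reduction $\red(T_{\tilde{\mathfrak X}}) \cong T'_{\tilde{\mathfrak X}}$ with the twist $T_\alpha \in 0\vert1\ETw(\Lambda\mathfrak X)$ that, by section~\ref{sec:twisted-de-rham}, computes $H^{\mathrm{ev}}_{\mathrm{deloc}}(\mathfrak X,\alpha)$; this is a direct pull-push calculation along~\eqref{eq:21}, which amounts to evaluating the super-transgressed line on the families of constant supercircles sitting over each component of $\Lambda\mathfrak X$. Once this is in place, commutativity of the square reduces to the claim that $\red(E_{\mathfrak V})$ represents $\mathrm{ch}(\mathfrak V)$. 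The geometric input is that, after dimensional reduction, the partition function of $E_{\mathfrak V}$ becomes the supertrace of the holonomy of the twisted superconnection around a small supercircle, localized on the inertia orbifold; a short-time expansion of this supertrace, in the style of the standard heat-kernel proof of the local index theorem, should yield the twisted Chern--Weil representative of $\mathrm{ch}(\mathfrak V)$.

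The principal obstacle, in my view, is this last identification. In the untwisted case, the rigorous comparison of a super-parallel-transport partition function with the Chern character relies on Mathai--Quillen-type calculations; in the twisted setting one has the extra task of tracking how the gerbe's connection enters the super-holonomy, and how its contribution to $T_{\tilde{\mathfrak X}}$ threads through the correspondence~\eqref{eq:21}. Keeping the signs, normalizations, and bookkeeping of components of $\Lambda\mathfrak X$ consistent --- so that the supertrace around a small supercircle lands in the right sector with the correct $\alpha$-twisted coefficients and matches the Chern--Weil formula used to define $\mathrm{ch}$ on $K^\alpha$ --- is where the real work of the proof lies.
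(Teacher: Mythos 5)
Your overall decomposition is the paper's: construct $T_{\tilde{\mathfrak X}}$ by super-transgression, build $E_{\mathfrak V}$ from super parallel transport with the gerbe data absorbing the failure of functoriality (the compatibility \eqref{eq:16} between $h$ and $m$ is what makes the naturality square \eqref{eq:6} commute), identify $T'_{\tilde{\mathfrak X}}\cong T_\alpha$, and then identify $\operatorname{red}(E_{\mathfrak V})$ with $\mathrm{ch}(\mathfrak V)$. However, two points in your plan for the right-hand square are off. First, identifying $T'_{\tilde{\mathfrak X}}$ with $T_\alpha$ is \emph{not} accomplished by evaluating the transgressed line on constant supercircles over $\Lambda\mathfrak X$: that only recovers the underlying line bundle $L'$ with its descent datum \eqref{eq:14}. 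A $0\vert1$-twist is a line bundle with \emph{flat superconnection}, and the differential $\nabla'+\Omega\wedge\cdot$ of the delocalized complex \eqref{eq:20} --- in particular the appearance of the $3$-curvature $\Omega=dB$ --- must be extracted from the $\mathbb R^{1\vert1}$-rotation action on the family $K_{\check x}$ of supercircles over $\Pi T\hat X_0$. In the paper this is section~\ref{sec:superconnection}: the rotation is decomposed into elementary bordism moves (figure~\ref{fig:step-by-step}), all but one of which act trivially on the twist line, the remaining one contributing $1+\theta\tilde A$; after the gauge transformation $\exp(-\tilde B)$ of \eqref{eq:25} this yields $d+A+\Omega$. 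Without this step there is no differential to compare with $\mathrm{ch}$.

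Second, the final identification is \emph{not} a short-time or heat-kernel asymptotic expansion; phrasing it that way suggests taking a limit that does not exist in the construction. Dimensional reduction evaluates $E_{\mathfrak V}$ on the specific $\Pi T\hat X_0$-family $K_{\check x}$ of supercircles of length exactly $1$; the odd directions of the base $\Pi T\hat X_0$ encode differential-form degree, and the super parallel transport over this family equals $\exp(-\nabla_V^2)$ \emph{exactly} (Dumitrescu's identity, as invoked in proposition~\ref{prop:2}), with no remainder to control and no Mathai--Quillen analysis. The Chern--Weil representative then appears on the nose as $\str(m^{-1}\circ\exp(\nabla_V^2-B))$, which is the definition \eqref{eq:23}; the factor $m^{-1}$ (from gluing the endpoints of the skeleton over the inertia) and the $-B$ (from the gauge transformation \eqref{eq:25}) are exactly the twisted corrections you were worried about, and they are finite-dimensional bookkeeping rather than analysis. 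The left-hand triangle, as you say, is essentially definitional.
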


Since the right vertical map is a bijection on concordance classes
(theorem~\ref{thm:2}), this gives a geometric interpretation of the
twisted orbifold Chern character.

This theorem generalizes results of \textcite{arXiv:0711.3862} and
\textcite{arXiv:1202.2719} for the untwisted, non-equivariant case.
In a different direction, we point out that
\textcite{arXiv:1610.02362} extended that story to the equivariant
case for Lie group actions.  We also point out that, in the case of a
global quotient orbifold $X\sslash G$ and twist coming from a central
extension of $G$, \textcite{arXiv:1410.5500} obtained a somewhat
different field-theoretic interpretation of $K^\alpha(X\sslash G)
\otimes \mathbb C$.  In fact, he also found a description of
$\mathrm{TMF} \otimes \mathbb C$ in terms of ``simple''
$2\vert1$-EFTs, and it would be interesting to investigate if these
are obtained, in our language, as the dimensional reduction of
full-blown $2\vert1$-EFTs.

\subsection{Notation and conventions}
\label{sec:notation}

We generally follow Deligne and Morgan's \cite{MR1701597} treatment of
supermanifolds.  We also use Dumitrescu's \cite{MR2407109} notion of
super parallel transport (for connections), with the difference that
we always transport along the \emph{left}-invariant vector field $D =
\partial_\theta - \theta\partial_t$ of $\mathbb R^{1\vert1}$.  For the
notion of Euclidean structures (in dimension $1\vert1$ and $0\vert1$),
see \cite[appendix B]{arXiv:1703.00314}.

Most manipulations in this paper happen in the bicategory of stacks
(Grothendieck fibrations satisfying descent) over the site of
supermanifolds (see e.g.\ \textcite{MR2817778} for details).  Every Lie
groupoid presents a stack, and we use, concretely, the stack of
torsors as a model.  An orbifold is a stack presented by a proper
étale Lie groupoid.  We fix, once and for all, an étale Lie groupoid
presentation for our orbifold $\mathfrak X$,
\begin{equation*}
  s,t\colon X_1 \rightrightarrows X_0.
\end{equation*}
This determines presentations
\begin{equation*}
\hat X_1 \rightrightarrows \hat X_0, \quad
\Pi T X_1 \rightrightarrows \Pi T X_0, \quad
\Pi T\hat X_1 \rightrightarrows \Pi T\hat X_0,
\end{equation*}
of $\Lambda\mathfrak X$ (the inertia orbifold), $\Pi T\mathfrak X$
(the stack of maps $\mathbb R^{0\vert1} \to \mathfrak X$), and $\Pi
T\Lambda\mathfrak X$ respectively.  There are obvious maps
\begin{equation*}
  \mathfrak X
  \overset i\to \Lambda\mathfrak X
  \overset p\to \mathfrak X,
  \quad
  \Pi T\mathfrak X
  \overset i\to \Pi T\Lambda\mathfrak X
  \overset p\to \Pi T\mathfrak X
\end{equation*}
that we often leave implicit.

We fix also a gerbe with connection $\tilde{\mathfrak X} \to \mathfrak
X$ and, when needed, an $\tilde{\mathfrak X}$-twisted vector bundle
$\mathfrak V$; those are assumed to come with presentations as well,
with the notation introduced in appendix~\ref{sec:twisted-superconn}.
Note that the chosen presentation $X_1 \rightrightarrows X_0$ of
$\mathfrak X$ must be such that $\tilde{\mathfrak X}$ admits a
presentation as a central extension $L \to X_1$, which is a nontrivial
condition.  For instance, when $\mathfrak X$ is just a manifold, we
need, in general, to choose as presentation the Čech groupoid
$\coprod_{i,j} U_i \cap U_j \rightrightarrows \coprod_i U_i$ of some
open cover.

\subsection{Acknowledgments}
\label{sec:acknowledgments}

This paper is based on a part of my Ph.D. thesis \cite{MR3553617}, and
I would like to thank my advisor, Stephan Stolz, for the guidance.  I
would also like to thank Matthias Ludewig, Byungdo Park, Peter
Teichner, and Peter Ulrickson for valuable discussions, and Karsten
Grove for the financial support during my last semester as a graduate
student (NSF grant DMS-1209387).

\section{Twisted $0|1$-EFTs and de~Rham cohomology}
\label{sec:twisted-de-rham}

In this section, we extend in two directions the results of
\textcite{MR2763085} on the relation between $0$-dimensional
supersymmetric field theories over a manifold and de Rham cohomology.
First, we replace the target manifold by an orbifold, and, second, we
provide a classification of twists.  This provides, in particular, a
field-theoretic description of the delocalized twisted de Rham
cohomology of an orbifold, which is isomorphic, via the Chern
character, to complexified twisted $K$-theory \cite{MR1993337,
  MR2271013}.

We denote by $\mathfrak B(\mathfrak X)$ the stack of fiberwise
connected bordisms in $0\vert1\EBord(\mathfrak X)$, which can be
described concretely as
\begin{equation*}
  \mathfrak B(\mathfrak X) = \underline{\Fun}_{\mathrm{SM}}(\mathbb
  R^{0\vert1}, \mathfrak X)\sslash\mathrm{Isom}(\mathbb R^{0\vert1}).
\end{equation*}
When $\mathfrak X$ is a manifold, the mapping stack
$\underline{\mathrm{Fun}}_{\mathrm{SM}}(\mathbb R^{0\vert1}, \mathfrak
X)$ is represented by the parity-reversed tangent bundle, so we will in
general write $\Pi T\mathfrak X = \underline {\Fun}_{\mathrm{SM}}
(\mathbb R^{0\vert1}, \mathfrak X)$ for the stack of superpoints.  If
the stack $\mathfrak X$ admits a Lie groupoid presentation
$X_1\rightrightarrows X_0$, then $\Pi T\mathfrak X$ can be presented
by the Lie groupoid $\Pi TX_1 \rightrightarrows \Pi TX_0$; in
particular, if $\mathfrak X$ is an orbifold, $\Pi T \mathfrak X$ is
again an orbifold.

We define the groupoid of Euclidean $0\vert1$-twists over $\mathfrak
X$ to be
\begin{gather*}
  0\vert1\ETw(\mathfrak X) = \Fun_{\mathrm{SM}}(\mathfrak B(\mathfrak
  X), \mathrm{Vect})
\end{gather*}
and, for each $T \in 0\vert1\ETw(\mathfrak X)$, the corresponding set
of $T$-twisted topological respectively Euclidean field theories over
$\mathfrak X$ to be the set of global sections of $T$:
\begin{gather*}
  0\vert1\EFT^T(\mathfrak X) = C^\infty(\mathfrak B(\mathfrak X), T).
\end{gather*}
In these definitions, $\mathrm{Vect}$ can be the stack of real or
complex super vector bundles, but ultimately we are interested in the
complex case.

We recall the construction, in \textcite[definition 6.2]{MR2763085},
of the twist
\begin{equation*}
  T_1\colon \mathfrak B(\mathrm{pt}) = \mathrm{pt}\sslash \mathrm{Isom}(\mathbb R^{0\vert1}) \to \mathrm{Vect}.
\end{equation*}
This functor is entirely specified by the requirement that the point
$\mathrm{pt}$ maps to the odd complex line $\Pi\mathbb C$, and by a
group homomorphism $\mathrm{Isom}(\mathbb R^{0\vert1}) \to
\mathrm{GL}(0\vert1)\cong\mathbb C^{\times}$, which we take to be the
projection onto $\mathbb Z/2 = \{ \pm 1 \}$.  We set $T_n =
T_1^{\otimes n}$, and use the same notation for the pullback of those
line bundles to $\mathfrak B(\mathfrak X)$.

\subsection{Superconnections and twists}
\label{sec:superc-twists}

Following \textcite{MR790678}, we define a superconnection $\mathbb A$
on a $\mathbb Z/2$-graded complex vector bundle $V \to X$ to be an odd
operator (with respect to the total $\mathbb Z/2$-grading) on
$\Omega^*(X; V)$ satisfying the Leibniz rule
\begin{equation}
  \label{eq:12}
  \mathbb A(\omega f) = (d\omega) f + (-1)^{\abs\omega} \omega \mathbb
  A f.
\end{equation}
Here, $\omega \in \Omega^*(X)$ and $f \in \Omega^*(X; V)$.  It follows
that $\mathbb A$ is entirely determined by its restriction to
$\Omega^0(X;V)$; denoting by $A_i$, $i\geq 0$, the component
$\Omega^0(X;V) \to \Omega^i(X;V)$, we find that $A_1$ is an affine
(even) connection and all other $A_i$ are $\Omega^0(X)$-linear odd
homomorphisms.  The even operator $\mathbb A^2\colon \Omega^*(X;V) \to
\Omega^*(X;V)$ is $\Omega^*(X)$-linear, and is called the curvature of
$\mathbb A$.  In particular, a flat superconnection is a differential
on $\Omega^*(X; V)$.

Now, let $V_0, V_1 \to X$ be complex super vector bundles and $\mathbb
A_i$, $i=0,1$, superconnections.  Then there exists a superconnection
$\mathbb A$ on the homomorphism bundle $\Hom(V_0,V_1) \to X$,
characterized by
\begin{equation*}
  (\mathbb A\Phi)f = \mathbb A_1(\Phi f)
  - (-1)^{\abs{\Phi}}\Phi(\mathbb A_0 f)
\end{equation*}
for any section $\Phi$ of $\Omega^*(X;\Hom(V_0,V_1))$ of parity
$\abs{\Phi}$ and $f \in \Omega^*(X;V_0)$.  We define
$\mathrm{Vect}^{\mathbb A}$ to be the prestack on $\Man$ whose objects
over $X$ are vector bundles with superconnection $(V,\mathbb A)$, and
morphisms $(V_0,\mathbb A_0) \to (V_1,\mathbb A_1)$ are sections $\Phi
\in \Omega^*(X;\Hom(V_0,V_1))$ of even total degree satisfying
$\mathbb A(\Phi) = 0$.  This turns out to be a stack.

There is a nice interpretation of superconnections in terms of
Euclidean supergeometry.  Consider the pullback bundle $\pi^*V \to \Pi
TX$ along $\pi\colon \Pi TX \to X$.  Its sections on an open $U
\subset X$ are given by $\Omega^*(U) \otimes_{C^\infty(U)} C^\infty(U;
V) = \Omega(U; V)$, and to say that a given odd, fiberwise linear
vector field $\mathbb A$ on $\pi^*V$ is $\pi$-related to the de Rham
vector field $d$ on the base is precisely the same as saying that
equation \eqref{eq:12} holds.  Thus a superconnection on $V$ gives
$\pi^*V$ the structure of an $\Isom(\mathbb R^{1\vert1})$-equivariant
vector bundle over $\Pi TX$, where the action on the base is via the
projection $\Isom(\mathbb R^{1\vert1})\to\Isom(\mathbb R^{0\vert1})$
and the identification $\Pi TX = \underline{\mathrm{SM}}(\mathbb
R^{0\vert1}, X)$.  The superconnection is flat if and only if this
action factors through $\Isom(\mathbb R^{0\vert1})$.  There is also a
converse statement.

\begin{theorem}
  \label{thm:1}
  The stack map $\mathrm{Vect}^{\mathbb A} \to \mathrm{Vect}(\Pi
  T\textrm{---}\sslash \Isom(\mathbb R^{1\vert1}))$ defined above is
  an equivalence.  The same is true for the map
  $\mathrm{Vect}^{\mathbb A\flat} \to \mathrm{Vect}(\Pi
  T\textrm{---}\sslash \Isom(\mathbb R^{0\vert1}))$.
\end{theorem}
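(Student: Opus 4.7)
The plan is to prove the equivalence by local reduction and by matching infinitesimal with global equivariant data. Since both $\mathrm{Vect}^{\mathbb A}$ and $\mathrm{Vect}(\Pi T\textrm{---}\sslash \Isom(\mathbb R^{1\vert1}))$ are stacks on $\Man$, it suffices to construct the comparison functor and verify full faithfulness and essential surjectivity after restricting to a trivializing cover of $X$. So I may assume $V = X\times W$ for a fixed super vector space $W$.

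On such a trivialization, every superconnection takes the form $\mathbb A = d + A$, where $A \in \Omega^*(X;\End W)$ is totally odd, and the pullback bundle becomes the trivial $\pi^*V = \Pi TX \times W$. The Leibniz rule \eqref{eq:12} is then exactly the condition that the induced odd, fiberwise linear vector field on $\Pi TX \times W$ be $\pi$-related to the de Rham vector field $d$ on $\Pi TX$, as already observed in the excerpt. To upgrade this infinitesimal datum to an equivariant structure, I would identify $\Isom(\mathbb R^{1\vert1})$ as the super Lie group whose component group is $\mathbb Z/2$ and whose Lie superalgebra is generated by $\partial_t$ and the odd vector field $D = \partial_\theta - \theta\partial_t$; the pair consisting of the parity involution and the odd vector field $\mathbb A$ then integrates to an $\Isom(\mathbb R^{1\vert1})$-action on $\pi^* V$, whose action on the base is the standard one on $\Pi TX$ via the projection to $\Isom(\mathbb R^{0\vert1})$.

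The reverse construction restricts an $\Isom(\mathbb R^{1\vert1})$-equivariant bundle on $\Pi TX$ along the zero section $X \hookrightarrow \Pi TX$ to recover $V$, and extracts $\mathbb A$ as the infinitesimal action of $D$. These two passages are visibly mutually inverse on objects, and the morphism correspondence follows the same pattern: an even $\Phi\in\Omega^*(X;\Hom(V_0,V_1))$ satisfies $\mathbb A\Phi = 0$ precisely when the induced bundle map $\pi^*V_0 \to \pi^*V_1$ intertwines the odd infinitesimal actions, hence---after integration---the full super Lie group actions.

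The flat version reduces to the observation that the kernel of $\Isom(\mathbb R^{1\vert1}) \to \Isom(\mathbb R^{0\vert1})$ is the one-parameter group generated by $\partial_t = \tfrac{1}{2}[D,D]$, which acts on fibers of $\pi^*V$ via $\mathbb A^2$; hence the action factors through $\Isom(\mathbb R^{0\vert1})$ if and only if $\mathbb A^2 = 0$. The main obstacle I anticipate is making the infinitesimal-to-global integration fully functorial and compatible with arbitrary smooth families: one needs that integrating $\mathbb A$ to a super Lie group action commutes with base change and assembles into a morphism of stacks rather than a pointwise equivalence of groupoids. Once that bookkeeping is handled, the theorem reduces to the routine matching of data outlined above.
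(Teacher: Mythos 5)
Your argument is correct and follows exactly the route the paper itself sketches (the paper states theorem~\ref{thm:1} without proof, after outlining only the forward direction via $\pi$-relatedness of $\mathbb A$ to the de Rham vector field): local trivialization plus the standard dictionary between $\Isom(\mathbb R^{1\vert1})$-actions and a single odd generator, with the even part integrated by the fiberwise exponential of the curvature. The only blemish is a harmless sign, since $D^2=-\partial_t$ rather than $\partial_t=\tfrac12[D,D]$; this does not affect the conclusion that the action factors through $\Isom(\mathbb R^{0\vert1})$ exactly when $\mathbb A^2=0$.
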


As usual, we extend the above definitions by saying that a vector
bundle with superconnection on a stack $\mathfrak X$ is a fibered
functor $V\colon \mathfrak X \to \mathrm{Vect}^{\mathbb A}$; it is
flat if it takes values in the substack $\mathrm{Vect}^{\mathbb
  A\flat}$ of flat superconnections.  With this in place, we can
return to our discussion of twisted field theories.

\begin{proposition}
  \label{prop:3}
  For $\mathfrak X$ a differentiable stack, there is a natural
  equivalence of groupoids
  \begin{equation*}
    \mathrm{Vect}^{\mathbb A\flat}(\mathfrak X) \to 0\vert1\ETw(\mathfrak X).
  \end{equation*}
\end{proposition}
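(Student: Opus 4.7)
The plan is to upgrade Theorem~\ref{thm:1}, which handles the case of manifolds, to the general differentiable stack case by descent.  The candidate equivalence sends a flat superconnection $(V,\mathbb A)$ on $\mathfrak X$ to the $\Isom(\mathbb R^{0\vert1})$-equivariant bundle $\pi^{*}V \to \Pi T\mathfrak X$ constructed as in the paragraph preceding Theorem~\ref{thm:1}; this assignment is well defined on any chart and glues because both source and target are stacks on $\Man$.  For $\mathrm{Vect}^{\mathbb A\flat}$ this is stated explicitly; for the target, $\mathrm{Vect}(\Pi T\textrm{---}\sslash \Isom(\mathbb R^{0\vert1}))$ is a stack because $\mathrm{Vect}$ is one and because $\Pi T(\textrm{---})$ preserves the relevant covers.

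To show the resulting map is an equivalence, I would fix a Lie groupoid presentation $X_1 \rightrightarrows X_0$ of $\mathfrak X$ and exploit the homotopy limit descriptions
\begin{equation*}
  \mathrm{Vect}^{\mathbb A\flat}(\mathfrak X) \simeq \holim_n \mathrm{Vect}^{\mathbb A\flat}(X_n), \qquad
  0\vert1\ETw(\mathfrak X) \simeq \holim_n \mathrm{Vect}\bigl(\Pi TX_n \sslash \Isom(\mathbb R^{0\vert1})\bigr),
\end{equation*}
where $X_n$ is the $n$-fold fiber product coming from the nerve of $X_1 \rightrightarrows X_0$.  On each simplicial level, Theorem~\ref{thm:1} supplies an equivalence.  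It then suffices to verify that the face and degeneracy maps intertwine these level-wise equivalences, which amounts to naturality of the manifold construction: for every smooth $f\colon Y \to X$, the pullback $f^{*}(V,\mathbb A)$ must correspond to the pullback along $\Pi Tf$ of the equivariant bundle associated to $(V,\mathbb A)$.

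The main (and really only) obstacle is this naturality check.  Fortunately, it is essentially forced by the characterization of superconnections as odd, fiberwise-linear vector fields on $\pi^{*}V$ that are $\pi$-related to the de~Rham vector field on $\Pi TX$: both pullbacks are computed by restriction along $\Pi Tf$ and satisfy the same defining property, so they agree up to canonical isomorphism.  Once this is spelled out carefully, descent---that is, the fact that a level-wise natural equivalence of cosimplicial groupoids induces an equivalence of homotopy limits---yields the equivalence on $\mathfrak X$, and the same argument applied 2-functorially in $\mathfrak X$ gives the naturality asserted in the proposition.
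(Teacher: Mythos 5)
Your proposal is correct and follows essentially the same route as the paper: fix a Lie groupoid presentation, apply theorem~\ref{thm:1} levelwise, and pass to homotopy limits, checking naturality of the manifold-level construction. The only step you take as given that the paper spells out is the identification $0\vert1\ETw(\mathfrak X)\simeq\holim_n\mathrm{Vect}(\Pi TX_n\sslash\Isom(\mathbb R^{0\vert1}))$, which the paper justifies by computing the homotopy limit of a double cosimplicial groupoid (built from a bisimplicial manifold) in the two possible orders.
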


\begin{proof}
  There exists a bisimplicial manifold $\{ \Pi TX_j \times
  \Isom(\mathbb R^{0\vert1})^{\times i} \}_{i,j\geq 0}$ whose vertical
  structure maps give nerves of Lie groupoids presenting $\Pi
  T\mathfrak X \times \Isom(\mathbb R^{0\vert1})^{\times i}$ and whose
  horizontal structure maps give nerves of presentations of $\Pi
  TX_j\sslash\Isom(\mathbb R^{0\vert1})$.  Applying $\mathrm{Vect}$,
  we get a double cosimplicial groupoid
  \begin{equation}
    \label{eq:19}
    \begin{gathered}
      \xymatrix@R=1em@C=1.2em{
        \vdots & \vdots\\
        \mathrm{Vect}(\Pi TX_1) \ar@<-.66ex>[r]\ar@<.66ex>[r]
        \ar@<-1.33ex>[u]\ar@<+1.33ex>[u]\ar[u] & \mathrm{Vect}(\Pi
        TX_1 \times \Isom(\mathbb R^{0\vert1}))
        \ar@<-1.33ex>[r]\ar@<+1.33ex>[r]\ar[r]\ar@<-1.33ex>[u]\ar@<+1.33ex>[u]\ar[u]
        & \cdots
        \\
        \mathrm{Vect}(\Pi TX_0) \ar@<-.66ex>[r]\ar@<.66ex>[r]
        \ar@<-.66ex>[u]\ar@<.66ex>[u] & \mathrm{Vect}(\Pi TX_0 \times
        \Isom(\mathbb R^{0\vert1}))
        \ar@<-1.33ex>[r]\ar@<+1.33ex>[r]\ar[r]\ar@<-.66ex>[u]\ar@<+.66ex>[u]
        & \cdots }
    \end{gathered}
  \end{equation}

  Now we calculate the (homotopy) limit of this diagram in two
  different ways.  Taking the limit of the columns and then the limit
  of the resulting cosimplicial groupoid, we get, by proposition~8 of
  \cite{arXiv:1703.00314},
  \begin{multline*}
  \holim \left(  \Fun_{\mathrm{SM}}(\Pi T\mathfrak X, \mathrm{Vect})
  \rightrightarrows \Fun_{\mathrm{SM}}(\Pi T\mathfrak X \times
  \Isom(\mathbb R^{0\vert1}), \mathrm{Vect})\threerightarrows \cdots
\right)\\
     \cong  \Fun_{\mathrm{SM}}(\Pi T\mathfrak X\sslash \Isom(\mathbb
     R^{0\vert1}), \mathrm{Vect}) = 0\vert1\ETw(\mathfrak X).
  \end{multline*}
  On the other hand the limit of each row is equivalent to
  $\mathrm{Vect}(\Pi TX_i\sslash \Isom(\mathbb R^{0\vert1}))$, and the
  stack map $\mathrm{Vect}^{\mathbb A\flat} \to \mathrm{Vect}(\Pi
  T\text{---}\sslash\Isom(\mathbb R^{0\vert1}))$ of
  theorem~\ref{thm:1} gives us a levelwise equivalence of simplicial
  groupoids
  \begin{equation*}
    \mathrm{Vect}^{\mathbb A\flat}(X_\bullet) \to \mathrm{Vect}(\Pi TX_\bullet\sslash \Isom(\mathbb
    R^{0\vert1})).
  \end{equation*}
  Taking limits, we get an equivalence
  \begin{math}
    \mathrm{Vect}^{\mathbb A\flat}(\mathfrak X) \to
    0\vert1\ETw(\mathfrak X).
  \end{math}
\end{proof}

\begin{remark}
  \label{rem:nonflat}
  We can drop the flatness condition by considering vector bundles on
  $\Pi T\mathfrak X\sslash\Isom(\mathbb R^{1\vert1})$.
\end{remark}

\subsection{Concordance of flat sections}
\label{sec:conc-flat-sect}

The goal of this section is to identify concordance classes of twisted
$0\vert1$-EFTs.  This is an extension of the well-known fact that
closed differential forms are concordant through closed forms if and
only if they are cohomologous; the extension takes place in two
orthogonal directions: we replace manifolds with differentiable stacks
and the trivial flat line bundle with an arbitrary flat
superconnection.  Fix a differentiable stack $\mathfrak X$ with
presentation $X_1\rightrightarrows X_0$ and let $T \in
0\vert1\ETw(\mathfrak X)$ be the twist associated to the flat
superconnection $(V,\mathbb A)$ on $\mathfrak X$.

\begin{proposition}
\label{prop:1}
  There are natural bijections
  \begin{gather*}
    0\vert1\EFT^T(\mathfrak X) \cong \{ \text{$\mathbb A$-closed even
      forms with values
      in } V \},\\
    0\vert1\EFT^{T\otimes T_1}(\mathfrak X) \cong \{ \text{$\mathbb
      A$-closed odd forms with values in } V \}.
  \end{gather*}
\end{proposition}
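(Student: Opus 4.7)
The plan is to unravel the definition of sections of $T$ using the concrete model of twists from Theorem~\ref{thm:1}, and then decompose the invariance condition into its infinitesimal and discrete components.

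By Theorem~\ref{thm:1}, the twist $T$ corresponding to $(V,\mathbb A)$ is modeled concretely as the $\Isom(\mathbb R^{0\vert1})$-equivariant flat bundle $\pi^*V$ on $\Pi T\mathfrak X$. Unwinding the definition $0\vert1\EFT^T(\mathfrak X) = C^\infty(\mathfrak B(\mathfrak X), T)$, and using the descent argument of Proposition~\ref{prop:3} applied to the presentation $X_1 \rightrightarrows X_0$, I identify a $T$-twisted EFT with an even, $\Isom(\mathbb R^{0\vert1})$-invariant global section of $\pi^*V$ over $\Pi T\mathfrak X$, i.e.\ with an even element of $\Omega^*(\mathfrak X; V)$ invariant under the induced $\Isom(\mathbb R^{0\vert1})$-action.

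Writing $\Isom(\mathbb R^{0\vert1}) \cong \mathbb R^{0\vert1} \rtimes \mathbb Z/2$, I would split the invariance analysis into two pieces. First, the infinitesimal action of $\mathbb R^{0\vert1}$ on $\pi^*V$ is, by the vector-field interpretation of superconnections discussed before Theorem~\ref{thm:1}, precisely the operator $\mathbb A$; hence $\mathbb R^{0\vert1}$-invariance is equivalent to $\mathbb A\omega = 0$. Second, the parity-reversal generator of $\mathbb Z/2$ acts on $\Omega^k(\mathfrak X; V_j)$ by the total parity sign $(-1)^{k+j}$: the $(-1)^k$ is the base action on $\Pi T\mathfrak X$, while the $(-1)^j$ is the fiber action built into the equivalence of Theorem~\ref{thm:1}, as one checks by demanding that the Stolz--Teichner twist $T_1$ correspond to the trivial flat odd line. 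Thus $\mathbb Z/2$-invariance is equivalent to $\omega$ having even total parity, which coincides with the evenness required of a morphism in $\mathrm{Vect}$. This yields the first bijection.

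For the second bijection, tensoring $T$ with $T_1$ shifts the fiber parity by one and multiplies the $\mathbb Z/2$-character by $-1$; both adjustments conspire to turn ``even total parity'' into ``odd total parity'' while leaving the $\mathbb A$-closed condition untouched. The main obstacle will be the careful bookkeeping of the $\mathbb Z/2$-action on $\pi^*V$: one must verify that Theorem~\ref{thm:1} incorporates the grading involution of $V$ into the component-group action, so that $\mathbb Z/2$-invariance picks out forms of even total parity rather than the more restrictive forms with both form degree and fiber degree even separately.
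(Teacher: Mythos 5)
Your proposal is correct and follows essentially the same route as the paper: both unwind $C^\infty(\mathfrak B(\mathfrak X),T)$ via descent over the presentation and the equivariant-bundle model of theorem~\ref{thm:1}, identify $\mathbb R^{0\vert1}$-invariance with $\mathbb A$-closedness and $\mathbb Z/2$-invariance with even total parity, and reduce the $T\otimes T_1$ case to parity reversal of $V$. Your more explicit bookkeeping of the $\mathbb Z/2$-action is a reasonable expansion of what the paper compresses into ``this means it is even and closed.''
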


\begin{proof}
  The vector bundle $T\colon \mathfrak B(\mathfrak X) \to
  \mathrm{Vect}$ determines a sheaf $\Gamma_T$ on $\mathfrak
  B(\mathfrak X)$, assigning to an object $f\colon S \to \mathfrak
  B(\mathfrak X)$ the complex vector space of sections of $f^*T$.  The
  bundle $T$ is specified by a coherent family of objects in the
  double cosimplicial groupoid \eqref{eq:19}, representing an object
  in the limit of that diagram, and a global section is specified by a
  coherent family of sections.

  Similarly, the superconnection $\mathbb A$ determines a sheaf
  $\Gamma^*_{\mathbb A}$ on $\mathfrak X$ whose sections over
  $f\colon S \to \mathfrak X$ are the super vector space of forms in
  $\Omega^*(S, f^*V)$ annihilated by $\mathbb A$.  Global sections of
  $\Gamma^*_{\mathbb A}$ are the super vector space
  \begin{equation*}
    \Gamma^*_{\mathbb A}(\mathfrak X) = \lim\left( \Gamma^*_{\mathbb
        A}(X_0) \rightrightarrows \Gamma^*_{\mathbb A}(X_1) \right).
  \end{equation*}
  Now, the data of $(V,\mathbb A)$ is determined, by hypothesis, by
  the same coherent family of objects in \eqref{eq:19} as $T$.
  Suppose we are given a coherent family of sections.  Individually,
  the bottom row of \eqref{eq:19} specifies an element of
  $\Omega^*(X_0,V)$ which is invariant under the $\Isom(\mathbb
  R^{0\vert1}) \cong \mathbb R^{0\vert1} \rtimes \mathbb Z/2$-action;
  this means it is even and closed, i.e., a section of
  $\Gamma^0_{\mathbb A}(X_0)$.  Similarly, the second row by
  itself specifies a section of $\Gamma^0_{\mathbb A}(X_1)$, and
  the coherence conditions involving vertical maps say these two
  things determine a section of $\Gamma^0_{\mathbb A}(\mathfrak
  X)$.  The correspondence between sections of $T$ and
  $\Gamma_{\mathbb A}^0$ is clearly bijective.

  Replacing $T$ with $T \otimes T_1$ in the above argument amounts to
  replacing $V$ with its parity reversal $\Pi V$ (cf.\
  \textcite[proposition 6.3]{MR2763085}).
\end{proof}

Recall that a flat superconnection defines a differential on the space
of forms with values in the corresponding vector bundle.

\begin{proposition}
  \label{prop:13}
  Concordance classes of EFTs are in bijection with cohomology
  classes:
  \begin{equation*}
    0\vert1\EFT^{T\otimes T_n}[\mathfrak X]  \cong H^{\bar n}(\Omega^*(\mathfrak X,V),
    \mathbb A).
  \end{equation*}
\end{proposition}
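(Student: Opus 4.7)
The plan is to pass via Proposition~\ref{prop:1} from twisted EFTs to closed $V$-valued forms, and then to carry out the standard chain-homotopy argument, adapted to superconnections, at the level of a groupoid presentation.

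First I would apply Proposition~\ref{prop:1} to $\mathfrak X$ and, in parallel, to $\mathfrak X \times \mathbb R$ equipped with the pulled-back twist $p^*(T\otimes T_n)$, which arises from the flat superconnection $\mathbb A' = p^*\mathbb A + d_{\mathbb R}$ on $p^*V$. Since $T_1^{\otimes 2}$ is canonically trivial (the sign character $\Isom(\mathbb R^{0\vert 1}) \to \{\pm 1\}$ squared is trivial), one has $T_n \cong T_{\bar n}$, so this identifies $0\vert 1\EFT^{T\otimes T_n}(\mathfrak X)$ with the $\mathbb A$-closed elements of $\Omega^{\bar n}(\mathfrak X,V)$, and concordance of EFTs with the relation: $\omega_0 \sim \omega_1$ iff there is an $\mathbb A'$-closed $\tilde\omega \in \Omega^{\bar n}(\mathfrak X\times \mathbb R, p^*V)$ with $i_j^*\tilde\omega = \omega_j$ for $j=0,1$. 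It then remains to show that $\sim$ coincides with $\mathbb A$-cohomology.

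For the easy direction, given $\omega_1 - \omega_0 = \mathbb A \eta$, I would take
\[
\tilde\omega = (1-t)\omega_0 + t\omega_1 + dt \wedge \eta,
\]
whose restrictions are $\omega_0,\omega_1$; the Leibniz rule~\eqref{eq:12} yields $\mathbb A'\tilde\omega = -dt\wedge\omega_0 + dt\wedge\omega_1 - dt\wedge \mathbb A\eta = 0$. For the converse, I would introduce the fiber-integration operator
\[
K \colon \Omega^{*}(\mathfrak X \times \mathbb R, p^*V) \to \Omega^{*-1}(\mathfrak X,V), \qquad K\tilde\omega = \int_0^1 \iota_{\partial_t}\tilde\omega \,dt,
\]
and verify the chain-homotopy identity $\mathbb A \circ K + K\circ \mathbb A' = i_1^* - i_0^*$, which follows from the super-Cartan formula $\mathbb A'\iota_{\partial_t} + \iota_{\partial_t}\mathbb A' = \mathcal L_{\partial_t}$ together with the fact that the zero-order components $A_i$ of $\mathbb A$ contain no $dt$. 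Applied to an $\mathbb A'$-closed $\tilde\omega$, this gives $\omega_1 - \omega_0 = \mathbb A(K\tilde\omega)$, so $[\omega_0] = [\omega_1]$ in $H^{\bar n}(\Omega^*(\mathfrak X,V),\mathbb A)$.

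The main obstacle, though not a serious one, is that $\tilde\omega$ and $K$ are \emph{a priori} defined on a Lie groupoid presentation $X_1 \rightrightarrows X_0$ of $\mathfrak X$ rather than on $\mathfrak X$ itself. Fiber integration over $[0,1]$, however, commutes with pullback along the groupoid structure maps and with the descent data for $V$, so both constructions descend to $\mathfrak X$, and the chain-homotopy identity verified on $X_0$ transfers. This is essentially the same descent argument used in the proof of Proposition~\ref{prop:3}.
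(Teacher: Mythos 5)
Your proposal is correct and follows essentially the same route as the paper: reduce via Proposition~\ref{prop:1} to a statement about $\mathbb A$-closed forms, use the explicit primitive $\omega_0+\mathbb A(t\eta)$ (your $\tilde\omega$ is literally this element expanded via the Leibniz rule) for one direction, and fiberwise integration over $[0,1]$ for the converse. The only cosmetic difference is that the paper defines the integration operator on $S$-points of $\mathfrak X$, so that descent is automatic rather than checked on a groupoid presentation.
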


\begin{proof}
  By naturality of the correspondences in the previous proposition, it
  suffices to show that
  \begin{equation*}
    \Gamma_{\mathbb
      A}^{\bar n}(\mathfrak X)/\mathrm{concordance} \cong H^{\bar n}(\Omega^*(\mathfrak X,V),
    \mathbb A).
  \end{equation*}
  Suppose, first, that the closed forms $\omega_0, \omega_1 \in
  \Omega^*(\mathfrak X, V)$ are cohomologous, i.e., $\omega_1-\omega_0
  = \mathbb A\alpha$.  Then
  \begin{equation*}
    \omega = \omega_0 + \mathbb A(t\alpha) \in \Omega^*(\mathfrak
    X\times \mathbb R, V)
  \end{equation*}
  satisfies $i_j^*\omega = \omega_j$, $j=0,1$.  (Here, we use the same
  notation for an object over $\mathfrak X$ and its pullback via
  $\pr_1\colon \mathfrak X \times \mathbb R \to \mathfrak X$; as
  usual, $t$ is the coordinate on $\mathbb R$.)

  Conversely, suppose we are given a closed form $\omega \in
  \Omega^*(\mathfrak X \times \mathbb R, V)$ with $\omega_j =
  i_j^*\omega$, $j=0,1$.  We need to find a form $\alpha \in
  \Omega^*(\mathfrak X, V)$ such that $\omega_1-\omega_0 = \mathbb A
  \alpha$.  Schematically, it will be
  \begin{math}
    \alpha = -\int_{\mathfrak X \times [0,1]/\mathfrak X} \omega.
  \end{math}
  More precisely, we need to define $\alpha_f \in \Omega^*(S, f^*V)$
  for each $S$-point $f\colon S \to \mathfrak X$.  That will be given
  by the fiberwise integral
  \begin{equation*}
    \alpha_f = - \int_{S \times [0,1]/S} (f\times\mathrm{id})^*\omega,
  \end{equation*}
  which is clearly natural in $S$.  Notice that the vector bundle in
  which $\omega$ takes values comes with a canonical trivialization
  along the $\mathbb R$-direction, so the integral makes sense.

  Now, define operators $\mathbb A_f = f^*\mathbb A \otimes 1$ and
  $d_{\mathbb R} = 1 \otimes d$ on
  \begin{math}
    \Omega^*(S \times \mathbb R, V) \cong \Omega^*(S, V) \otimes
    \Omega^*(\mathbb R).
  \end{math}
  From the derivation property of $\mathbb A$, it follows that $(f
  \times \mathrm{id})^*\mathbb A = \mathbb A_f + d_{\mathbb R}$.
  Then, writing $\omega_f=(f\times\mathrm{id})^*\omega$, we have
  \begin{align*}
    f^*\mathbb A(\alpha_f)
    &= -f^*\mathbb A \int_{S\times[0,1]/S} \omega_f
      = -\int_{S\times[0,1]/S} \mathbb A_f\omega_f
      = \int_{S\times[0,1]/S} d_{\mathbb R} \omega_f\\
    &= f^*\omega_1 - f^*\omega_0.
  \end{align*}
  Thus $\omega_1-\omega_0=\mathbb A\alpha$.
\end{proof}

Applying the theorem to the trivial twist $T$, we get the following.

\begin{corollary}
  \label{cor:0|1-EFT-orbifold}
  For any differentiable stack $\mathfrak X$, there is a natural
  bijection
  \begin{equation*}
    0\vert1\EFT^{T_n}(\mathfrak X) \cong \Omega^{\bar n}_{\mathrm{cl}}(\mathfrak X)
  \end{equation*}
  between $T_n$-twisted $0\vert1$-EFTs over $\mathfrak X$ and closed
  differential forms of parity $\bar n$ on $\mathfrak X$.  If
  $\mathfrak X$ is an orbifold, passing to concordance classes gives
  an isomorphism with $\mathbb Z/2$-graded delocalized cohomology
  \begin{equation*}
    0\vert1\EFT^{T_n}[\Lambda\mathfrak X]
    \cong H^{\bar n}_{\mathrm{deloc}}(\mathfrak X).  
  \end{equation*}
\end{corollary}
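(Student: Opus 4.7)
The plan is to specialize Propositions \ref{prop:1} and \ref{prop:13} to the trivial flat superconnection and, for the delocalized statement, to pass to the inertia orbifold. Under the equivalence of Proposition \ref{prop:3}, the trivial flat line bundle $(\underline{\mathbb C}, d)$ on $\mathfrak X$ corresponds to the tensor-unit twist, so the $T$ appearing in Propositions \ref{prop:1} and \ref{prop:13} may be dropped from the notation when those propositions are applied with this $(V, \mathbb A)$.

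For the first bijection, I apply Proposition \ref{prop:1} in two cases. When $n$ is even, $T_n$ is canonically isomorphic to the trivial twist, and the first displayed bijection of Proposition \ref{prop:1} identifies $0\vert1\EFT^{T_n}(\mathfrak X)$ with closed even forms on $\mathfrak X$, i.e., $\Omega^{\bar 0}_{\mathrm{cl}}(\mathfrak X) = \Omega^{\bar n}_{\mathrm{cl}}(\mathfrak X)$. When $n$ is odd, $T_n$ is canonically isomorphic to $T_1$, and the second bijection identifies $0\vert1\EFT^{T_n}(\mathfrak X)$ with closed odd forms, again $\Omega^{\bar n}_{\mathrm{cl}}(\mathfrak X)$. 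Naturality in $\mathfrak X$ is inherited from Proposition \ref{prop:1}.

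For the concordance statement, I apply Proposition \ref{prop:13} with $(V, \mathbb A) = (\underline{\mathbb C}, d)$ on the inertia orbifold $\Lambda\mathfrak X$ in place of $\mathfrak X$, obtaining
\[
  0\vert1\EFT^{T_n}[\Lambda\mathfrak X] \cong H^{\bar n}(\Omega^*(\Lambda\mathfrak X), d).
\]
The right-hand side is, by definition, the $\mathbb Z/2$-graded delocalized cohomology $H^{\bar n}_{\mathrm{deloc}}(\mathfrak X)$ of the orbifold $\mathfrak X$. The orbifold hypothesis on $\mathfrak X$ enters here to ensure that $\Lambda\mathfrak X$ is again an orbifold with a well-behaved de Rham complex.

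No substantive obstacle arises; the corollary is a direct specialization of the preceding propositions. The only point meriting verification is that, under Proposition \ref{prop:3}, $T_n$ is canonically isomorphic to the trivial twist or to $T_1$ according to the parity of $n$, which is immediate from the construction of $T_1$ via the sign character of $\Isom(\mathbb R^{0\vert1})$.
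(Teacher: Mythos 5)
Your proposal is correct and follows essentially the same route as the paper, which simply invokes Proposition~\ref{prop:13} (and implicitly Proposition~\ref{prop:1}) for the trivial flat superconnection $(\underline{\mathbb C},d)$, applied to $\Lambda\mathfrak X$ for the concordance statement. Your extra observation that $T_n$ depends only on the parity of $n$ (since the sign character squares to the trivial one) is the right minor point to verify, and is consistent with the paper's subsequent remark that this fails for the \emph{topological} twists.
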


\begin{remark}
  Replacing $\mathfrak B(\mathfrak X)$ with the stack of connected
  $0\vert1$-dimensional manifolds over the orbifold $\mathfrak X$,
  \begin{equation*}
    \mathfrak B_{\mathrm{top}}(\mathfrak X) = \underline{\Fun}_{\mathrm{SM}}(\mathbb
    R^{0\vert1}, \mathfrak X)\sslash\mathrm{Diff}(\mathbb R^{0\vert1}),
  \end{equation*}
  we arrive at the notion of \emph{topological} twists and (twisted)
  field theories.  The basic twist $T_1$ lifts in a natural way to
  $\mathfrak B_{\text{top}}(\mathfrak X)$, and in this case $T_n$ in
  fact depends on $n$, and not just on its parity.  In an entirely
  analogous way to \textcite{MR2763085}, one can show that
  \begin{equation*}
    0\vert1\TFT^{T_n}(\mathfrak X) \cong \Omega^n_{\mathrm{cl}}(\mathfrak X),\quad
    0\vert1\TFT^{T_n}[\Lambda\mathfrak X] \cong H^n_{\mathrm{deloc}}(\mathfrak X),
  \end{equation*}
  where the latter identification requires the assumption that
  $\mathfrak X$ is an orbifold.
\end{remark}

\subsection{Twisted de Rham cohomology for orbifolds}
\label{sec:tu-xu-de-rham}

In this section, we review the construction of delocalized twisted de
Rham cohomology for orbifolds due to \textcite{MR2271013}, and show
that it can, in fact, be interpreted as concordance classes of
suitably twisted $0\vert1$-EFTs.  In view of
proposition~\ref{prop:13}, this is not necessarily surprising; the
point here is to give explicit descriptions allowing us to show, in
section~\ref{sec:dimens-reduct-twists}, how the relevant twist arises,
in a natural way, from dimensional reduction.

Let $\mathfrak X$ be an orbifold and $\tilde{\mathfrak X}$ a gerbe
with Dixmier--Douady class $\alpha \in H^3(\mathfrak X,\mathbb Z)$,
both with presentations as in appendix~\ref{sec:twisted-superconn}.
Then the ($\mathbb Z/2$-graded) delocalized twisted cohomology groups
$H_{\mathrm{deloc}}^*(\mathfrak X, \alpha)$ are defined to be the
cohomology of the complex
\begin{equation}
  \label{eq:20}
  (\Omega^*(\Lambda\mathfrak X, L'), \nabla' + \Omega\wedge\cdot).
\end{equation}
Here, $\Omega^*$ stands for the $\mathbb Z/2$-graded de Rham complex,
$\Lambda\mathfrak X$ is the inertia orbifold, $\Omega$ is the
$3$-curvature of $\tilde{\mathfrak X}$ pulled back to
$\Lambda\mathfrak X$, and $(L', \nabla')$ is a line bundle with flat
connection we will describe below.  (This differs from the definition
of \textcite[section 3.3]{MR2271013} in that we perform the usual
trick to convert between $\mathbb Z/2$-graded and $2$-periodic
$\mathbb Z$-graded complexes, and we have chosen a more convenient
constant in front of $\Omega$, which produces an isomorphic chain
complex.  We also remark that changing the gerbe with connective
structure representing the class $\alpha$ produces a noncanonically
isomorphic complex; a specific isomorphism between the complexes
depends on the choice of isomorphism between the gerbes.)

The line bundle $L'$ on the inertia groupoid $\hat X_1
\rightrightarrows \hat X_0$ is as follows:
\begin{enumerate}
\item the underlying line bundle $L'$ is the restriction of $L$ to
  $\hat X_0 \subset X_1$, with the restricted connection $\nabla'$;
\item the isomorphisms $s^*L' \to t^*L'$ over $\hat X_1$ are described
  fiberwise by the composition
  \begin{equation*}
    L'_{(x,g)} = L_g \to L_f \otimes L_g \otimes L_{f^{-1}}
    \to L_{fgf^{-1}} = L'_{(x',g')},
  \end{equation*}
  where $f \in X_1$ induces a morphism $f\colon (x,g) \to (x',g')$ in
  $\hat X_1 \rightrightarrows \hat X_0$, and we used the canonical map
  $\mathbb C \to L_f \otimes L_{f^{-1}}$.
\end{enumerate}
It is an exercise to check that $\nabla'$ is invariant, and flat
(provided our central extension admits a curving, which we always
assume).

Now, the operator $\nabla' + \Omega \wedge \cdot$ on
$\Omega^*(\Lambda\mathfrak X, L')$ is a flat superconnection on $L'
\in \mathrm{Vect}(\Lambda\mathfrak X)$ (since $\nabla'$ is flat,
$d\Omega = 0$ and $\Omega \wedge \Omega = 0$), and therefore gives
rise to a twist $T_\alpha\colon \mathfrak B(\Lambda\mathfrak X) \to
\mathrm{Vect}$.  Combining proposition~\ref{prop:13} with the main
result of \textcite{MR2271013}, we obtain a field-theoretic
interpretation of complexified twisted $K$-theory.  (The compactness
assumption can be dropped by using field theories and de Rham
cohomology with compact support.)

\begin{theorem}
  \label{thm:2}
  For every compact orbifold $\mathfrak X$ and $\alpha \in
  H^3(\mathfrak X,\mathbb Z)$, there are natural bijections
  \begin{gather*}
    0\vert1\EFT^{T_\alpha}[\Lambda\mathfrak X]
    \cong H^{\mathrm{ev}}_{\mathrm{deloc}}(\mathfrak X,\alpha)
    \cong K^\alpha(\mathfrak X) \otimes \mathbb C,\\
    0\vert1\EFT^{T_\alpha \otimes T_1}[\Lambda\mathfrak X]
    \cong H^{\mathrm{odd}}_{\mathrm{deloc}}(\mathfrak X,\alpha)
    \cong K^{1+\alpha}(\mathfrak X) \otimes \mathbb C.
  \end{gather*}
\end{theorem}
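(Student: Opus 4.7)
The plan is to combine proposition~\ref{prop:13} with the main result of \textcite{MR2271013}. By the construction in the preceding paragraph, the twist $T_\alpha \in 0\vert1\ETw(\Lambda\mathfrak X)$ is precisely the one associated via proposition~\ref{prop:3} to the flat superconnection $(L', \nabla' + \Omega\wedge\cdot)$ on $\Lambda\mathfrak X$. Applying proposition~\ref{prop:13} to this data immediately yields
\begin{equation*}
  0\vert1\EFT^{T_\alpha}[\Lambda\mathfrak X]
  \cong H^{\mathrm{ev}}\bigl(\Omega^*(\Lambda\mathfrak X,L'),\nabla'+\Omega\wedge\cdot\bigr),
\end{equation*}
which by the definition \eqref{eq:20} is exactly $H^{\mathrm{ev}}_{\mathrm{deloc}}(\mathfrak X,\alpha)$. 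Naturality in $\mathfrak X$ is inherited from proposition~\ref{prop:13}, and this half of the statement uses no compactness hypothesis.

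For the second bijection, I would quote \textcite{MR2271013} directly: under the compactness assumption, their delocalized twisted Chern character provides a natural isomorphism
\begin{equation*}
  K^{*+\alpha}(\mathfrak X)\otimes\mathbb C
  \cong H^*_{\mathrm{deloc}}(\mathfrak X,\alpha),
\end{equation*}
and splitting this by $\mathbb Z/2$-grading gives the stated identification with $K^\alpha(\mathfrak X)\otimes\mathbb C$. This is the place where compactness of $\mathfrak X$ is actually used.

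The odd case follows by exactly the same argument after tensoring the twist with $T_1$. Indeed, as observed at the end of the proof of proposition~\ref{prop:1}, tensoring with $T_1$ amounts, on the superconnection side, to replacing $L'$ by its parity reversal $\Pi L'$; proposition~\ref{prop:13} then delivers the odd part of the delocalized cohomology, which Tu--Xu identifies with $K^{1+\alpha}(\mathfrak X)\otimes\mathbb C$.

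The only verification that remains entirely on our side is that $\nabla' + \Omega\wedge\cdot$ is genuinely a flat superconnection, so that proposition~\ref{prop:13} may be invoked. Granted a curving (which we always assume), this reduces to the three identities that $\nabla'$ is flat, $d\Omega = 0$, and $\Omega\wedge\Omega = 0$, all immediate from the gerbe data and already noted as an exercise. The main substantive obstacle is therefore entirely on the Tu--Xu side, namely the construction of the twisted delocalized Chern character and the proof that it is an isomorphism after complexification; we simply import this as a black box.
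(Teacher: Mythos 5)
Your proposal matches the paper's argument exactly: the paper states this theorem without a separate proof precisely because it is obtained by combining proposition~\ref{prop:13} (applied to the flat superconnection $\nabla' + \Omega\wedge\cdot$ on $L'$, whose flatness is checked just as you describe) with the main result of Tu--Xu, imported as a black box. Your treatment of the odd case via $T_1$ and parity reversal, and your observation about where compactness enters, are likewise consistent with the paper.
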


\begin{remark}
  All objects indexed by $\alpha$ actually depend, up to noncanonical
  isomorphism, on the choice of a gerbe representative and its
  connective structure.  This abuse of language is standard in the
  literature.
\end{remark}

To finish this section, we rephrase the description of $(L', \nabla')$
in terms of a Deligne $2$-cocycle $(h,A,B)$ on $X_1\rightrightarrows
X_0$ representing $\tilde {\mathfrak X}$ (see
\eqref{eq:9}--\eqref{eq:11} for our notation).  Then $L'$ is
topologically trivial and the connection $\nabla'$ is $d +
A\vert_{\hat X_0}$; flatness is due to the fact that $dA\vert_{\hat
  X_0} = (t^*B - s^*B)\vert_{\hat X_0} = 0$ since $s=t$ on $\hat X_0$.
To describe the isomorphism $s^*L' \to t^*L'$, we use as input $h \in
C^\infty(X_2,\mathbb C^\times)$, and we just need to specify a
$\mathbb C^\times$-valued function $H$ on $\hat X_1$.  Let $v = (g,z)
\in X_1 \times \mathbb C$ and $\tilde f = (f, w) \in X_1\times \mathbb
C$.  Then $\tilde f^{-1}=(f^{-1}, w^{-1} h^{-1}(f,f^{-1}))$, and we
find that $\tilde f v = (fg, zwh(f,g))$ and
\begin{align*}
  \tilde f v \tilde f^{-1}
  & = (fgf^{-1}, zwh(f,g)w^{-1}h(f,f^{-1})h(fg,f^{-1}))\\
  & = (g', zh(f,g)h^{-1}(f,f^{-1})h(fg,f^{-1})).
\end{align*}
Thus, using the cocycle condition \eqref{eq:9} for the triple
$(g',f,f^{-1})$, we get
\begin{equation}
  \label{eq:14}
  \begin{aligned}
    H\left(g\overset f\to g'\right) & = h(f,g)h^{-1}(f,f^{-1})h(fg,f^{-1})\\
    & = \frac{h(f,g)}{h(fgf^{-1},f)}.
  \end{aligned}
\end{equation}

\section{Torsors and bordisms over an orbifold}
\label{sec:bordisms-over-orbifold}

In this section, we provide a manageable model of the bordism category
$1\vert1\EBord(\mathfrak X)$ as a category internal to symmetric
monoidal stacks.  This will also fix notation used in the remainder of
the paper.

\subsection{Basic definitions}
\label{sec:basic-definition}

We start recalling the construction of Euclidean bordism categories
over a manifold $X$, then note that it immediately generalizes to the
case of bordisms over a stack $\mathfrak X$, and finally recast the
result in the language of torsors for a given Lie groupoid
presentation of $\mathfrak X$.

Given integers $d, \delta \geq 0$ (subject to certain conditions) and
a manifold $X$, \textcite{MR2742432} construct a bordism category
$d\vert\delta\EBord(X)$, which we briefly review here.  It is a
category internal to the category of symmetric monoidal stacks; that
is, it is given by symmetric monoidal stacks
$d\vert\delta\EBord(X)_i$, $i=0,1$, called the stack of objects and
the stack of morphisms respectively, together with functors
\begin{equation*}
  \xymatrix{
    d\vert\delta\EBord(X)_1
    \times^{s,t}_{d\vert\delta\EBord(X)_0}
    d\vert\delta\EBord(X)_1 \ar[d]^c
  \\ d\vert\delta\EBord(X)_1 \ar@<-5ex>[d]^s\ar@<5ex>[d]^t
  \\ d\vert\delta\EBord(X)_0 \ar[u]_u,
  }
\end{equation*}
standing for composition, source, target and unit, satisfying the
expected conditions up to prescribed natural transformations
(associator and left and right unitors, similar to the data of a
bicategory).  In the stack of objects $d\vert\delta\EBord(X)_0$, an
object lying over $S$ is given by the following collection of data:
\begin{enumerate}
\item a submersion $Y \to S$ with $d\vert\delta$-dimensional fibers
  and Euclidean structure (in the sense of
  \cite[section~4.2]{MR2742432} or, more succinctly,
  \cite[appendix~B]{arXiv:1703.00314});
\item a map $f\colon Y \to X$;
\item a submersion $Y^c \to S$ with $(d-1\vert\delta)$-dimensional
  fibers, fiberwise embedding $Y^c \to Y$, and a decomposition
  $Y\setminus Y^c = Y^+ \amalg Y^-$.
\end{enumerate}
The $S$-family $Y^c$ is called the \emph{core}.  A morphism in the
stack of objects is given by a germ (around the cores) of $(G, \mathbb
M)$-isometries respecting the maps to $X$.  Thus, $Y^\pm$ should be
thought as germs of collar neighborhoods of the core; they are a
technical device needed, among other things, to define the composition
functor $c$.  In the stack of morphism $d\vert\delta\EBord(X)_1$, an
object lying over $S$ is given by the following collection of data:
\begin{enumerate}
\item a submersion $\Sigma \to S$ with $d\vert\delta$-dimensional
  fibers and Euclidean structure;
\item a map $f\colon \Sigma \to X$;
\item objects $(Y_{\mathrm{in}}, Y_{\mathrm{in}}^c,
  Y_{\mathrm{in}}^\pm)$, $(Y_{\mathrm{out}}, Y_{\mathrm{out}}^c,
  Y_{\mathrm{out}}^\pm)$ of $d\vert\delta\EBord(X)_0$;
\item isometries $Y_{\mathrm{in}} \to \Sigma$ and $Y_{\mathrm{out}}\to
  \Sigma$ respecting the maps to $X$.
\end{enumerate}
The maps of item (4) are ``parametrizations of the boundary'', and are
subject to certain conditions formalizing this idea.  A morphism in
the stack of morphisms is given by (1) isomorphisms in the object
stack between the respective incoming and outgoing boundaries and (2)
an isometry between the $\Sigma$'s (or, more precisely, germs of
isometries around their cores, that is, the region between
$Y^c_{\mathrm{in}}$ and $Y^c_{\mathrm{out}}$), respecting the maps to
$X$ and the parametrizations of the boundaries.  The symmetric
monoidal structures in the stacks of objects and morphisms are given
by fiberwise disjoint union.

Now, turning to aspects more specific to this paper, we note that is
easy to extend the above definition of bordism category to the case
where $X$ is replaced by a ``generalized manifold'', or stack
$\mathfrak X$: an object in $d\vert\delta\EBord(\mathfrak X)_1$ is
given by an object in $d\vert\delta\EBord(\mathrm{pt})_1$ together
with an object of $\mathfrak X_\Sigma$ (which, by the Yoneda lemma,
corresponds to a map $\psi\colon \Sigma \to \mathfrak X$ in the realm
of generalized manifolds) and the corresponding boundary information,
which we will not detail here.  A morphism over $f\colon S' \to S$ in
the stack of bordisms is determined by a fiberwise isometry $F\colon
\Sigma' \to \Sigma$ covering $f$ (and suitably compatible with the
boundary information) together with a morphism $\xi$ between objects
of $\mathfrak X_{\Sigma'}$ as indicated in the diagram below.
\begin{equation}
  \label{eq:5}
  \begin{gathered}
    \xymatrix@R-1.5em{ \Sigma'
      \ar[dd]\ar[rd]^F
      \ar@/^3ex/[rrd]^{\psi'}_{}="a"&&\\
      & \Sigma \ar[dd]\ar[r]^{\psi}  \ar_-\xi@{<=}"a"& \mathfrak X\\
      S' \ar[rd]^f &&\\
      & S &}
  \end{gathered}
\end{equation}

\begin{remark}
  The bordisms-over-stacks point of view is very natural from the
  perspective of geometric structures.  The treatment of rigid
  geometries in \cite{MR2742432}, and in particular Euclidean
  structures, can be interpreted as the definition of a stack of
  atlases.  Then, letting $\mathfrak X$ denote the stack of Euclidean
  atlases, we recover $1\vert1\EBord$ from the plain topological
  bordism category as $1\vert1\Bord(\mathfrak X)$.  We will further
  develop this idea elsewhere.
\end{remark}

Finally, we assume $\mathfrak X$ is a differentiable stack with Lie
groupoid presentation $X = (X_1\rightrightarrows X_0)$, and recall a
convenient way to describe maps $\Sigma \to \mathfrak X$, namely as
$X$-torsors.  (See \textcite[section~2.4]{MR2817778} for a full
account of the theory of torsors.)  An $X$-torsor over $\Sigma$ is
given by
\begin{enumerate}
\item a submersion $\pi\colon U \to \Sigma$,
\item an anchor map $\psi_0\colon U \to X_0$, and
\item an action map $\mu\colon U \times_{X_0} X_1 \to U$.
\end{enumerate}
The conditions required of $\mu$ make it equivalent to the data of a
map $\psi_1$ such that
\begin{equation*}
  \xymatrix{U \times_\Sigma U \ar[r]^-{\psi_1} \ar@<-1ex>[d]\ar@<+1ex>[d] &
    X_1 \ar@<-1ex>[d]\ar@<+1ex>[d]\\
    U \ar[r]^-{\psi_0} & X_0}
\end{equation*}
is an internal functor satisfying the condition that
\begin{equation*}
  (\pr_1, \psi_1)\colon U \times_\Sigma U \rightarrow U \times_{X_0}^{\psi_0,t} X_1
\end{equation*}
is a diffeomorphism; in that case, $\mu$ can be recovered as the
inverse to the above followed by projection onto the second factor.
We will often denote the torsor simply by $\psi$, and write $(\Sigma,
\psi)$ for a bordism in $1\vert1\EBord(\mathfrak X)$.

A morphism of torsors is an equivariant map between the corresponding
$U$'s.  Thus, given a second bordism $(\Sigma', \psi')$ equipped with
an $X$-torsor, which, more specifically, is given by the data
\begin{equation*}
  \Sigma' \to S',\quad
  \pi'\colon U' \to \Sigma',\quad
  \psi_0',\quad
  \mu',
\end{equation*}
a morphism $(F,\lambda)\colon (\Sigma, \psi) \to (\Sigma', \psi')$ in
$1\vert1\EBord(\mathfrak X)_1$ covering $f\colon S \to S'$ is
determined by a fiberwise isometry $F\colon \Sigma \to \Sigma'$
covering $f$ and compatible with the boundary data, together with an
equivariant map $\lambda\colon U \to U'$ covering $F$ and compatible
with the anchor maps: $\psi_0' \circ \lambda = \psi_0$ (this, again,
is taken up to a suitable germ equivalence relation).  When
$\pi'=\pi$, the datum of $\lambda$ is equivalent to an internal
natural transformation between the internal functors determined by
$\psi$, $\psi'$; namely we set $\Lambda\colon U \to X_1$ to be the
composition
\begin{equation*}
  U \xrightarrow{(\mathrm{id}, \lambda)} U \times_S U
  \xrightarrow[\mu'\text{-action}]{\cong}U\times_{X_0}^{\psi_0',t}X_1
  \xrightarrow{\pr_2} X_1.
\end{equation*}

The stack of objects $1\vert1\EBord(\mathfrak X)_0$ has an analogous
description. To fix our notation, which closely follows the previous
discussion, an object here is given by (1) an $S$-family $Y \to S$ of
$1\vert1$-manifolds, (2) a codimension $1$ family of submanifolds $Y^c
\subset Y$, called the core, (3) fiberwise Euclidean structures on the
pair $(Y, Y^c)$, (4) a decomposition $Y \setminus Y^c = Y^+ \amalg
Y^-$, and (5) an $X$-torsor
\begin{equation*}
  \pi\colon U \to Y, \quad
  \psi_0\colon U \to X_0, \quad
  \mu\colon U \times_{X_0} X_1 \to U
\end{equation*}
on $Y$.  We typically write $(Y, \psi)$ for such an object.  A
morphism $(Y, \psi) \to (Y', \psi')$ is given by the germ (around
$Y^c$) of an isometry $F\colon Y \to Y'$ together with the germ
(around $\pi^{-1}(Y^c)$) of an equivariant map $\lambda_0\colon U \to
U'$.

\subsection{Skeletons}
\label{sec:skeletons}

To get an intuitive understanding of bordism categories over a stack
$\mathfrak X$, avoiding torsors, we can think as follows.  First, fix
a Lie groupoid $X_1 \rightrightarrows X_0$ presentation of $\mathfrak
X$.  Then some bordisms and isometries between them can be represented
by pictures like the following.
\begin{equation}
  \label{eq:7}
  \begin{gathered}
    \xymatrix{\Sigma \ar[r]^\psi\ar[d]& X_0\\S}\qquad \xymatrix{
      \Sigma'\ar[d]\ar[r]^F & \Sigma \ar[d]\ar[r]^{\xi}&  X_1\\
      S' \ar[r]& S&}
  \end{gathered}
\end{equation}
In fact, the $\psi$ and $\xi$ above relate to their counterparts in
\eqref{eq:5} by postcomposition with the atlas $X_0 \to \mathfrak X$
respectively whiskering with the natural transformation between the
two maps $X_1\rightrightarrows \mathfrak X$.  Not every bordism is of
this form, but in a fully extended framework it is intuitively clear
that every bordism can be expressed as a composition of those; it is
also not hard to conceive relations between those basic building
blocks.  The notion of skeletons, which we introduce now, is
essentially a way of dealing systematically with these generators and
relations in our case of interest.

From now on, we assume that $X$ is an orbifold groupoid, so that the
submersions $\pi$ underlying all $X$-torsors are étale.  We then
define a \emph{skeleton} of a fiberwise connected $S$-family $(Y,\psi)
\in 1\vert1\EBord(\mathfrak X)_0$ to be given by a map
\begin{math}
  \iota\colon S \times \mathbb R^{0\vert1} \to U
\end{math}
such that the composition $\pi \circ \iota$ gives a Euclidean
parametrization of the core $Y_c$.  In general, a skeleton is given by
a skeleton for each connected component.

A skeleton of a fiberwise connected $S$-family $(\Sigma, \psi) \in
1\vert1\EBord(\mathfrak X)_1$ is given by skeletons for the incoming
and outgoing boundary components, together with the following:
\begin{enumerate}
\item A collection $I_i$, $0\leq i\leq n$, of $S$-families of
  superintervals, as defined in appendix~\ref{sec:fund-theor-calc}.
  We denote the inclusion of the outgoing and incoming boundary
  components by
  \begin{equation*}
    S \times \mathbb R^{0\vert1} \overset{\iota_i^{\mathrm{out}}}\hookrightarrow S \times
    \mathbb R^{1\vert1} \overset{\iota_i^{\mathrm{in}}}\hookleftarrow S \times \mathbb R^{0\vert1}.
  \end{equation*}
  It is not required that these intervals have strictly positive
  length.
\item For each $i$, an embedding $I_i \hookrightarrow U$, by which we
  mean a Euclidean map from a neighborhood of the ``core''
  $[\iota_i^{\mathrm{out}}, \iota_i^{\mathrm{in}}] \subset S \times
  \mathbb R^{1\vert1}$ of $I_i$ into $U$.
\end{enumerate}
Here and in what follows, we write
\begin{gather*}
  b_i\colon S \times\mathbb R^{0\vert1} \xrightarrow{\iota_i^{\mathrm{out}}}
  S\times\mathbb R^{1\vert1} \hookrightarrow U,\\
  a_i\colon S \times\mathbb R^{0\vert1} \xrightarrow{\iota_i^{\mathrm{in}}}
  S\times\mathbb R^{1\vert1} \hookrightarrow U.
\end{gather*}
We require the following conditions of the above data:
\begin{enumerate}
\item for each $1\leq i< n$, the maps $\pi \circ b_i$ and $\pi \circ
  a_{i+1} \colon S \times \mathbb R^{0\vert1} \to \Sigma$ coincide;
\item if $\Sigma$ is a family of supercircles, then we also have
  $\pi\circ b_n = \pi \circ a_0$;
\item if $\Sigma$ has boundary, then the maps $a_0$ and $b_n$,
  together with the parametrization of the boundary, induce skeletons
  on each boundary component; we require that those agree with the
  initially given skeletons.
\end{enumerate}

These conditions mean, intuitively, that the superintervals $I_i
\hookrightarrow U \xrightarrow\pi \Sigma$ prescribe an expression of
$\Sigma$ as a composition of shorter pieces (right elbows and length
zero left elbows) in $1\vert1\EBord$, together with expressions of
each of these pieces in the form \eqref{eq:7}.

We will use the shorthand notation $I = \{ I_i \}$ to refer to the
skeleton, and $(\Sigma, \psi, I)$ to refer to a bordism with a choice
of skeleton.

Notice that $b_{i-1}$ and $a_i$ are isomorphic in the groupoid of $(S
\times \mathbb R^{0\vert1})$-points of $U \times_\Sigma U
\rightrightarrows U$, since their images in $\Sigma$ agree; we denote
by $j_i\colon S \times \mathbb R^{0\vert1} \to U \times_\Sigma U$ the
unique morphism $b_{i-1} \to a_i$, that is, the unique map such that
\begin{equation}
  \label{eq:13}
  \pr_1 \circ j_i = b_{i-1}, \quad \pr_2 \circ j_i = a_i.
\end{equation}

We denote by $1\vert1\EBord(\mathfrak X)^{\mathrm{skel}}_i$, $i=0,1$,
the variants of $1\vert1\EBord(\mathfrak X)_i$ where each bordism and
boundary component comes with a choice of skeleton; morphisms in these
stacks are just morphisms in the old variants, after forgetting the
skeleton.  There is a canonical choice of skeleton on the composition
of bordisms with skeleton.  With this observation, we obtain an
internal category $1\vert1\EBord(\mathfrak X)^{\mathrm{skel}}$.

\begin{proposition}
  \label{prop:4}
  The forgetful map $1\vert1\EBord(\mathfrak X)^{\mathrm{skel}} \to
  1\vert1\EBord(\mathfrak X)$ is a levelwise equivalence.
\end{proposition}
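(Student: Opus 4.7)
The plan is to check the two conditions of an equivalence of stacks (fully faithfulness and essential surjectivity) at each level $i=0,1$ separately. The fully faithful part is essentially free: by the definition given in the excerpt, the morphisms in $1\vert1\EBord(\mathfrak X)_i^{\mathrm{skel}}$ are exactly the morphisms in $1\vert1\EBord(\mathfrak X)_i$ with the skeletons forgotten. Hence the forgetful functor induces an isomorphism on the sets of morphisms between any two skeletonized objects over any test supermanifold $S$. So the entire content is essential surjectivity, which, being a statement about stacks, only needs to be verified locally in $S$.

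For the stack of objects, I would proceed as follows. Given $(Y,\psi)\in 1\vert1\EBord(\mathfrak X)_0$ over $S$, the fiberwise core $Y^c \to S$ is a submersion whose fibers are $0\vert 1$-dimensional Euclidean manifolds. Hence, after restricting $S$ to a sufficiently small open subsupermanifold, one can pick a Euclidean parametrization $S\times\mathbb R^{0\vert 1}\to Y^c$. Since $X$ is assumed to be an étale Lie groupoid, the submersion $\pi\colon U\to Y$ is étale, so any such parametrization admits a local lift to a map $\iota\colon S\times\mathbb R^{0\vert 1}\to U$. This is exactly a skeleton of $(Y,\psi)$; the disjoint-union case is handled component by component.

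For the stack of morphisms I would do the same, but with a bit more bookkeeping. Given $(\Sigma,\psi)\in 1\vert1\EBord(\mathfrak X)_1$ over $S$ together with prescribed skeletons on $Y_{\mathrm{in}}^c$ and $Y_{\mathrm{out}}^c$, shrink $S$ so that the closed region of $\Sigma$ bounded by the incoming and outgoing cores, which is fiberwise compact and $1$-dimensional, can be cut into finitely many superinterval pieces $I_0,\dots,I_n$ whose cores parametrize this region end-to-end. Because $\pi\colon U\to\Sigma$ is étale, each such superinterval piece admits a local lift to $U$ along the identity on $S$; one arranges the cuts so that the endpoints of the lifts match (giving the maps $j_i$ with the property \eqref{eq:13}) and so that the boundary pieces $a_0$ and $b_n$ restrict, via the parametrizations of the boundary, to the preassigned skeletons of $Y_{\mathrm{in}}$ and $Y_{\mathrm{out}}$. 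This produces the required skeleton $I=\{I_i\}$ satisfying conditions (1)–(3). The closed/circle case is handled by additionally requiring $\pi\circ b_n=\pi\circ a_0$, which is a matter of choosing the last cut along the already-lifted curve.

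The main obstacle is not conceptual but combinatorial: one has to ensure that the local lifts of the superintervals through the étale map $\pi$ can be chosen so as to glue along overlaps and to match the prescribed boundary skeletons, rather than producing independent lifts that fail to compose. This is why one needs to shrink $S$ enough that the core admits a finite cover by superintervals whose lifts can be adjusted along the discrete fiber of $\pi$; with that done, the cocycle data for a torsor precisely provides the maps $j_i$ witnessing $b_{i-1}\cong a_i$ in $U\times_\Sigma U$, and everything assembles into a skeleton. Since all of this can be performed after passing to a cover of $S$, essential surjectivity holds locally, giving the levelwise stack equivalence.
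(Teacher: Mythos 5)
Your proposal is correct and follows the same route the paper takes: the paper disposes of the proposition in one line (``all spaces of skeletons are contractible''), and your argument is exactly the fleshed-out version of that remark --- full faithfulness is automatic because morphisms in the skeletal variant are defined by forgetting skeletons, and local essential surjectivity amounts to the local existence of skeletons, which you obtain by lifting core parametrizations and superinterval decompositions through the \'etale map $\pi\colon U \to \Sigma$ after shrinking $S$. Nothing is missing.
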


This is clear, since all spaces of skeletons are contractible.  It is
also clear that $1\vert1\EBord(\mathfrak X)$ does not depend on the
choice of a Lie groupoid presentation for $\mathfrak X$, since it only
makes reference to torsors over it.  On the other hand, the definition
of $1\vert1\EBord(\mathfrak X)^{\mathrm{skel}}$ does make explicit
reference to the groupoid $X_1 \rightrightarrows X_0$, so the notation
is slightly abusive.  This is harmless, as shown by the previous
proposition.

\begin{remark}
  Evidently, we can form a pullback of $(\Sigma, \psi, I) \in
  1\vert1\EBord(\mathfrak X)^{\mathrm{skel}}_1$ via a map $f\colon S'
  \to S$ by simply choosing a cartesian morphism $\lambda\colon
  (\Sigma', \psi') \to (\Sigma, \psi)$ covering $f$ in
  $1\vert1\EBord(\mathfrak X)_1$ and any skeleton $I'$ for $(\Sigma',
  \psi')$.  However, we note that there is a canonical choice to be
  made: we ask that $I, I'$ have the same indexing set and
  \begin{equation*}
    \xymatrix{ I'_i \ar[r]\ar[d] & I_i \ar[d]\\ U' \ar[r]^\lambda& U}
  \end{equation*}
  is cartesian for all $i$.  We denote that skeleton by $\lambda^*I$,
  the endpoints $a'_i$ by $\lambda^*a_i$, etc.
\end{remark}

We call the collection of maps $I_i \to U \overset\pi\to \Sigma$ the
associated triangulation of the skeleton $I$.  Triangulations such
that the diagrams
\begin{equation*}
  \xymatrix{ I'_i \ar[r]\ar[d] & I_i \ar[d]\\ \Sigma' \ar[r]^F& \Sigma}
\end{equation*}
are cartesian will be called \emph{compatible}.

Finally, suppose we have $(\Sigma, \psi) \in 1\vert1\EBord(\mathfrak
X)_1$ and
\begin{equation*}
  I = \{ I_i \}_{i \in \mathcal I}, \quad I' = \{ I_i' \}_{i \in \mathcal I'}
\end{equation*}
two skeletons.  Then we say that $I'$ is refinement of $I$ if there is
a surjective map $r\colon\mathcal I' \to \mathcal I$, such that, for
each $i \in \mathcal I$, $r^{-1}(i)$ indexes a collection $I_{i_1}',
\dots, I_{i_n}' \subset U$ where $b_{i_k}' = a_{i_{k+1}}'$ for each
$1\leq k < n$ and $a_{i_1} = a_i'$, $b_{i_n} = b_i'$; in words, the
$I_{i_k}'$, $1\leq k\leq n$, are adjacent subintervals whose
concatenation is precisely $I_i$.  We denote by $R_{I'}^I\colon (\psi,
I') \to (\psi, I)$ the morphism having the identity as its underlying
torsor map.

\subsection{The globular subcategory; the superpath stack}
\label{sec:globular-subcategory}

Denote by
\begin{equation*}
  1\vert1\EBord(\mathfrak X)_0^{\mathrm{glob}} \subset
  1\vert1\EBord(\mathfrak X)_0^{\mathrm{skel}}
\end{equation*}
the sub-prestack with the same objects but containing only those
morphisms $(F,\lambda)\colon (Y, \psi) \to (Y', \psi')$ such that the
diagram
\begin{equation}
  \label{eq:17}
  \xymatrix{
    S \times \mathbb R^{0\vert1} \ar[r]^{f\times \id} \ar[d]^\iota&
    S' \times \mathbb R^{0\vert1} \ar[d]^{\iota'}\\
    U \ar[r]^\lambda & U'}
\end{equation}
commutes, where $f\colon S \to S'$ is the map $F$ lies over.  Denote
by $1\vert1\EBord(\mathfrak X)_1^{\mathrm{glob}} \subset
1\vert1\EBord(\mathfrak X)_1^{\mathrm{skel}}$ the sub-prestack
containing only those morphisms that map into $1\vert1\EBord(\mathfrak
X)_0^{\mathrm{glob}}$ via the source and target functors.  These two
objects fit together into a “globular” internal category
$1\vert1\EBord(\mathfrak X)^{\mathrm{glob}}$, which can be thought of
as a smooth bicategory.

For each test manifold $S$, we obtain from each of the above variants
a category
\begin{equation*}
  1\vert1\EBord(\mathfrak X)_S^{\mathrm{glob}}, \qquad
  1\vert1\EBord(\mathfrak X)_S^{\mathrm{skel}}
\end{equation*}
internal to symmetric monoidal groupoids.  Those internal categories
are fibrant in the sense of \textcite{arXiv:1004.0993}, and they
clearly determine the same symmetric monoidal bicategory.  Thus, the
inclusion $1\vert1\EBord(\mathfrak X)^{\mathrm{glob}} \to
1\vert1\EBord(\mathfrak X)^{\mathrm{skel}}$ ought to be considered as
an equivalence of internal categories.  Since we do not know of a
comprehensive theory of internal categories to quote from, we will
leave this as an informal statement.

Our construction of twists and twisted field theories below will be
based on the globular variant.  This provides some slight
simplifications and allows us to focus on the more conceptual side of
the discussion.  More specifically, in order to extend our
construction of the twist functor $T$ in section~\ref{sec:twists} from
$1\vert1\EBord(\mathfrak X)^{\mathrm{glob}}$ to
$1\vert1\EBord(\mathfrak X)^{\mathrm{skel}}$, it is necessary (and
sufficient) to choose a stable trivialization of the gerbe
$\tilde{\mathfrak X}$ (passing, if needed, to a finer Lie groupoid
presentation of $\mathfrak X$), as the reader unsatisfied with the
argument of the previous paragraph should be able to verify.

\subsection{Some examples}
\label{sec:generators-relations}

We give here an essentially complete description of $1\vert1\EBord =
1\vert1\EBord(\mathrm{pt})$.  Examples of objects and morphisms in
$1\vert1\EBord(\mathfrak X)$ can then be constructed by pulling back
to a general base space $S$ and choosing a torsor representing a map
to $\mathfrak X$.

The object stack $1\vert1\EBord_0$ contains an object $\mathrm{sp}$
given by the manifold $Y = \mathbb R^{1\vert1}$ with core $Y^c =
\mathbb R^{0\vert1}$ and neighborhoods $Y^+ = \mathbb
R^{1\vert1}_{>0}$, $Y^- = \mathbb R^{1\vert1}_{<0}$.  There is a
similar (but nonisomorphic) object $\overline{\mathrm{sp}}$ with $Y^+$
and $Y^-$ interchanged.  Both of them have $\mathbb Z/2$ as
automorphism group, generated by the flip $\mathrm{fl}\colon \mathbb
R^{1\vert1} \to \mathbb R^{1\vert1}$ which acts by $-1$ on odd
functions.

Objects in $1\vert1\EBord_1$ are given by fiberwise disjoint unions of
one of four kinds of basic bordisms: superintervals, left and right
elbows, and supercircles.  The basic building blocks are as follows.
\begin{enumerate}
\item The left elbow of length $0$, $L_0 \in
  1\vert1\EBord(\overline{\mathrm{sp}} \amalg \mathrm{sp},
  \emptyset)$, has $\mathbb R^{1\vert1}$ as underlying manifold.  The
  boundary parametrization
  \begin{math}
    L_0 \leftarrow \overline{\mathrm{sp}} \amalg \mathrm{sp}
  \end{math}, in terms of the underlying manifolds, is the map
  $\mathrm{id} \amalg \mathrm{id}$.
\item The right elbows $R_r \in 1\vert1\EBord(\emptyset, \mathrm{sp}
  \amalg \overline{\mathrm{sp}})$ form a $\mathbb
  R^{1\vert1}_{>0}$-family, pa\-ram\-e\-tri\-zing the ``super length''
  $r$.  The underlying family of Euclidean manifolds is $\mathbb
  R^{1\vert1}_{>0} \times \mathbb R^{1\vert1} \to \mathbb
  R^{1\vert1}_{>0}$, and the parametrization of the boundary is
  \begin{equation*}
    \mathrm{id} \amalg T_r\colon
    (\mathbb R^{1\vert1}_{>0} \times \mathbb R^{1\vert1}) \amalg (\mathbb
    R^{1\vert1}_{>0} \times \mathbb R^{1\vert1}) \to \mathbb
    R^{1\vert1}_{>0} \times \mathbb R^{1\vert1},
  \end{equation*}
  where $T_r\colon (r, s) \mapsto (r, r \cdot s)$ is the translation
  on affine Euclidean space $\mathbb R^{1\vert1}$ by the amount
  specified by the $\mathbb R^{1\vert1}_{>0}$ parameter.
\item Any isomorphism $F\colon Y \to Y'$ in $1\vert1\EBord_0$ leads to
  a ``thin'' bordism $F \in 1\vert1\EBord_1$, having $Y'$ as
  underlying manifold, $F\colon Y \to Y'$ as incoming parametrization,
  and $\mathrm{id}_{Y'}$ as outgoing parametrization.
\end{enumerate}
Some isomorphisms in $1\vert1\EBord_1$ are listed below.  They
restrict to the identity on the boundaries.  Isomorphisms (1) and (4)
are given by the obvious identification of the underlying manifold of
each bordism, and (2) and (3) by a flip.
\begin{enumerate}
\item $\mathrm{fl}^2 \cong \mathrm{id}_{\mathrm{sp}}$
\item $L_0 \circ (\mathrm{fl} \amalg \mathrm{fl}) \cong L_0$
\item $(\mathrm{fl} \amalg \mathrm{fl}) \circ R_r \cong
  R_{\mathrm{fl}(r)}$
\item $R_{r_1\cdot r_2}
  \cong R_{r_1}
    \circ (\mathrm{id}_{\mathrm{sp}}
           \amalg L_0
           \amalg \mathrm{id}_{\mathrm{\overline{\mathrm{sp}}}})
    \circ R_{r_2}$.
\end{enumerate}
The last of them is an isomorphisms of $(\mathbb R^{1\vert1}_{>0}
\times \mathbb R^{1\vert1}_{>0})$-families, and $r_1, r_2$ indicate
the coordinate function of each factor; more formally, the $R$'s
indicate pullbacks of $R_r$ along the multiplication respectively
projection maps $\mathbb R^{1\vert1}_{>0} \times \mathbb
R^{1\vert1}_{>0} \to \mathbb R^{1\vert1}_{>0}$.  Similar bordisms
$\bar L$, $\bar R$, etc., are obtained by reversing the roles of
$\mathrm{sp}$ and $\overline{\mathrm{sp}}$.  They satisfy analogous
relations to the above, and there are also isomorphisms
\begin{equation*}
  L \circ \sigma \cong \bar L,\qquad \sigma \circ R \cong \bar R.
\end{equation*}

A family $I_r \in 1\vert1\EBord(\mathrm{sp}, \mathrm{sp})$ of
intervals of length $r$ is obtained by composing $L_0$ and $R_r$ along
the common $\overline{\mathrm{sp}}$ boundary:
\begin{equation*}
  I_r = (\mathrm{id}_{\mathrm{sp}} \amalg L_0)
   \circ (R_r \amalg \mathrm{id}_{\mathrm{sp}}).
\end{equation*}
Similarly, a family $K_r \in 1\vert1\EBord(\emptyset, \emptyset)$ of
supercircles of length $r$ is obtained from elbows and the braiding
isomorphism in the way indicated in figure~\ref{fig:circle}.  There
are stacks $\mathfrak P$ and $\mathfrak K$ of superintervals
respectively supercircles.  More generally, we write
\begin{equation*}
  \mathfrak K(\mathfrak X), 
  \mathfrak P(\mathfrak X)
   \hookrightarrow 1\vert1\EBord(\mathfrak X)^{\mathrm{glob}}_1
\end{equation*}
for the stacks of supercircles respectively superintervals over
$\mathfrak X$.

\begin{figure}
  \centering
    \begin{tikzpicture}[baseline={(0,0.5)}]
    \draw[l,L] (0,1) to["$L_0$"left] (0,0);
    \draw[S] ([r] 0,0) to["$\sigma$"right] ([l] 1,1);
    \draw[S, preaction={draw,ultra thick,white}] ([r] 0,1) to ([l] 1,0);
    \draw[r,R] (1,0)
      to[looseness=3, "$R_r$"right] (1,1);
    \draw (0,0) pic {dotm}
          (0,1) pic {dotp} node[above]{$\mathrm{sp}$}
          (1,0) pic {dotp} 
          (1,1) pic {dotm} node[above]{$\overline{\mathrm{sp}}$};
   \end{tikzpicture}
   \caption{A supercircle of length $r$.}
  \label{fig:circle}
\end{figure}
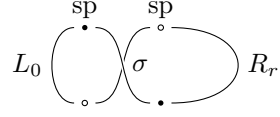

\begin{remark}
  \Textcite[theorem 6.42]{MR2648897} provide generators and relations
  for a variant of $1\vert1\EBord$ (in their case, unoriented and
  satisfying a positivity condition, but, more importantly, not
  extended up to include isometries of bordisms).  Since our goal is
  to give some examples of field theories, and not a classification,
  we will be satisfied with a somewhat informal approach to
  constructing functors between internal categories.  In fact, we will
  explain our constructions in detail on superintervals, and we will
  be less explicit about extending fibered functors on $\mathfrak
  P(\mathfrak X)$ to full-blown internal functors.  Such details are
  usually easy to guess, and compatibility with relations (1)--(4)
  above will be easy to verify.
\end{remark}

\section{Twists for $1|1$-EFTs from gerbes with connection}
\label{sec:twists}

Let $\mathfrak X$ be an orbifold and $\tilde{\mathfrak X} \to
\mathfrak X$ be a gerbe with connective structure, presented by a Lie
groupoid $X = (X_1 \rightrightarrows X_0)$ respectively a central
extension having $L \to X_1$ as the underlying line bundle with
connection, as in appendix~\ref{sec:twisted-superconn}.  The goal of
this section is to associate to $\tilde{\mathfrak X}$ a Euclidean
$1\vert1$-twist
\begin{equation*}
T = T_{\tilde{\mathfrak X}} \in 1\vert1\ETw(\mathfrak X) =
\Fun^\otimes_{\mathrm{SM}}(1\vert1\EBord(\mathfrak X)^{\mathrm{glob}}, \mathrm{Alg}).
\end{equation*}
This construction is drastically simplified by the fact that it takes
values in the subcategory $B\mathrm{Line} \hookrightarrow
\mathrm{Alg}$, where $\mathrm{Line}$ denotes the symmetric monoidal
stack of complex line bundles and $B\mathrm{Line}$ the internal
category having trivial stack of objects and $\mathrm{Line}$ as stack
of morphisms.  In other words, the only relevant algebra in this
construction is the monoidal unit $\mathbb C \in \mathrm{Alg}$, and
the only relevant modules are the invertible ones.

\subsection{Construction of the twist functor}
\label{sec:gerbes-1vert1-twists}

At the level of object stacks, there is no work to do.  We just need
to describe a map $1\vert1\EBord(\mathfrak X)_1^{\mathrm{glob}} \to
\mathrm{Line}$ of symmetric monoidal stacks, which by abuse of
notation we still call $T$, together with natural isomorphisms $\mu$
and $\epsilon$, the compositor and unitor (cf.\ \cite[defintion
2.18]{MR2742432}).  We start discussing the underlying fibered functor
$T$.

Fix a fiberwise connected $S$-family $(\Sigma, \psi, I)$ of bordisms
with skeleton.  Our goal is to describe a line bundle $T(\Sigma, \psi,
I)$ over $S$.  Recall \eqref{eq:13} that the skeleton $I$ determines a
collection of maps $j_i\colon S \times \mathbb R^{0\vert1} \to U
\times_\Sigma U$.  We set
\begin{equation*}
  T(\Sigma, \psi, I) = \bigotimes\nolimits_{1\leq i\leq n} L_{j_i}.
\end{equation*}
Here and in what follows, we write, for any map $f\colon S \times
\mathbb R^{0\vert1} \to U \times_\Sigma U$,
\begin{equation*}
  L_f = (\psi_1\circ f)^*L\vert_{S \times 0}.
\end{equation*}

As to morphisms, we initially consider two cases.
\begin{enumerate}
\item $\lambda\colon (\Sigma, \psi, I) \to (\Sigma', \psi', I')$ is a
  refinement of skeletons, i.e., the underlying torsors are equal, the
  torsor map $\lambda$ is the identity, and $I$ is a refinement of
  $I'$.
\item $\lambda\colon (\Sigma, \psi, I) \to (\Sigma', \psi', I')$
  covers a map $f\colon S \to S'$ and $I$ and $\lambda^*I'$ are
  compatible skeletons. This means that the endpoints of the
  superintervals $I_i, \lambda^*I_i' \subset U$ are (uniquely)
  isomorphic in the groupoid $U \times_K U \rightrightarrows U$, and
  we denote by $\tilde a_i, \tilde b_i \in (U\times_KU)_{S \times
    \mathbb R^{0\vert1}}$ the corresponding morphisms $a_i \to
  \lambda^*a_i'$, $b_i \to \lambda^*b_i'$.  Note that $\tilde a_i$,
  $\tilde b_i$ are the endpoints of the superintervals
  \begin{equation*}
    J_i = I_i \times_K \lambda^*I_i' \subset U \times_KU.
  \end{equation*}
\end{enumerate}
In situation (1), the line bundles $T(\Sigma, \psi, I)$ and $T(\Sigma,
\psi, I')$ only differ by the addition of canonically trivial tensor
factors, and $T(\lambda)$ is the natural identification.  Situation
(2) is more interesting.  We denote by $\mathrm{SP}_i\colon L_{\tilde
  a_i} \to L_{\tilde b_i}$ the super parallel transport of $\psi_1^*L$
along $J_i$, and by
\begin{equation*}
  h_i\colon L_{\tilde a_i} \otimes L_{j_i}
  \to L_{\lambda^*j_i'}  \otimes L_{\tilde b_{i-1}} 
\end{equation*}
the gerbe multiplication map, or any other map obtained by adjunction.
Then we finally consider the composition
\begin{multline}
  \label{eq:2}
  L_{\tilde a_0}^\vee \xrightarrow{\mathrm{SP}^\vee_0} L_{\tilde b_0}^\vee
  \xrightarrow{h_1} L_{\lambda^*j_1'} \otimes L_{\tilde a_1}^\vee \otimes
  L_{j_1}^\vee \xrightarrow{\mathrm{SP}^\vee_1} L_{\lambda^*j_1'} \otimes L_{\tilde b_1}^\vee \otimes
  L_{j_1}^\vee \xrightarrow{h_2}\\
  \xrightarrow{h_2} L_{\lambda^*j_1'} \otimes L_{\lambda^*j_2'} \otimes L_{\tilde a_2}^\vee \otimes
  L_{j_2}^\vee \otimes
  L_{j_1}^\vee \xrightarrow{\mathrm{SP}^\vee_2} \dots \to \\
  \xrightarrow{\mathrm{SP}^\vee_n} \left(
    \bigotimes\nolimits_{1\leq i\leq n} L_{\lambda^*j_i'} \right) \otimes
  L_{\tilde b_n}^\vee \otimes \left( \bigotimes\nolimits_{1\leq i\leq n} L_{j_i}^\vee \right).
\end{multline}
Since we are working with globular bordisms, $\tilde a_0$ and $\tilde
b_n$ are identities, so $L_{\tilde a_0}$, $L_{\tilde b_n}$ are
trivial and we let
\begin{equation}
  \label{eq:1}
  T(\lambda)\colon T(\Sigma, \psi, I) = \bigotimes_{1\leq i\leq n} L_{j_i} 
  \to \bigotimes_{1\leq i\leq n} L_{\lambda^*j_i'} = \lambda^*T(\Sigma', \psi', I')
\end{equation}
be adjoint to the above.

\begin{proposition}
  The prescriptions above uniquely determine a symmetric monoidal
  fibered functor $T\colon 1\vert1\EBord(\mathfrak
  X)_1^{\mathrm{glob}} \to \mathrm{Vect}$.
\end{proposition}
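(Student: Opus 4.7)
The plan is to show that the two prescriptions already given determine $T$ on every morphism via a canonical factorization, and then to verify functoriality, compatibility with pullbacks, and monoidality.

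First I would observe that any morphism $\lambda\colon (\Sigma,\psi,I) \to (\Sigma',\psi',I')$ in $1\vert1\EBord(\mathfrak X)^{\mathrm{glob}}_1$ admits a canonical factorization
\begin{equation*}
  (\Sigma,\psi,I) \xrightarrow{R} (\Sigma,\psi,\tilde I) \xrightarrow{\lambda'} (\Sigma',\psi',I'),
\end{equation*}
where $\tilde I$ is the coarsest common refinement of $I$ and $\lambda^\ast I'$, $R$ is the refinement morphism of case~(1), and $\lambda'$ has $\tilde I$ and $(\lambda')^\ast I'$ compatible in the sense of case~(2). The only choice is that of the common refinement, and since refinements between two prescribed skeletons form a contractible (essentially discrete) groupoid, defining $T(\lambda) := T(\lambda')\circ T(R)$ is forced and consistent. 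For the two prescriptions to be compatible on their overlap, observe that when a morphism has identity torsor map and compatible skeletons (so both (1) and (2) apply), the endpoints $\tilde a_i$ and $\tilde b_i$ of case~(2) are all identities, the parallel transports $\mathrm{SP}_i$ reduce to identities, the gerbe multiplications $h_i$ reduce to canonical isomorphisms of trivial line bundles, and \eqref{eq:2} collapses to the natural identification used in case~(1).

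The main step is functoriality under composition. Composition of two refinements is immediate. For two composable morphisms of type (2),
\begin{equation*}
  (\Sigma,\psi,I) \xrightarrow{\lambda} (\Sigma',\psi',I') \xrightarrow{\kappa} (\Sigma'',\psi'',I''),
\end{equation*}
one must check that the composite chain \eqref{eq:2} for $\kappa\circ\lambda$ agrees with the concatenation of the chains for $\lambda$ and $\kappa$. This reduces to two facts: (i) super parallel transport along a superinterval obtained by concatenating adjacent pieces equals the composition of the partial transports, which is the fundamental property of super parallel transport recalled in appendix~\ref{sec:fund-theor-calc}; and (ii) the gerbe multiplication maps satisfy the associativity condition \eqref{eq:9}, so that applying $h$ once for $\lambda$ and then once for $\kappa$ produces the same result as the single $h$-map associated with $\kappa\circ\lambda$. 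Mixed compositions (a refinement with a type-(2) morphism, or vice versa) are handled by commuting the refinement past the type-(2) morphism and absorbing it into a finer common refinement, which is consistent because case~(1) contributes only canonical trivial factors. I expect this bookkeeping---tracking several parallel transports and gerbe multiplications through a common refinement---to be the main technical obstacle.

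Finally, symmetric monoidality is essentially free: $T$ is defined by a tensor product indexed over the connected components and the tensor factors of a skeleton, so the compositor is the obvious identification coming from the fact that the canonical skeleton of a composition of bordisms is the concatenation of the individual skeletons, and the unitor is trivial. The fibered functor property---naturality of $T(\lambda)$ under a cartesian morphism covering $f\colon S'\to S$---is inherited from the corresponding naturality of the pullback line bundle $L$, its connection, its super parallel transport, and the gerbe multiplication, all of which are functorial in the base.
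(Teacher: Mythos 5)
Your overall strategy---factor an arbitrary morphism through a common refinement of the skeletons, check the two prescriptions agree, then verify compositions via gluing of super parallel transport and associativity of the gerbe multiplication---matches the paper's up to the point where it breaks down. The genuine gap is the assertion that $I$ and $\lambda^*I'$ admit a (coarsest) common refinement. For a single bordism this is true, but these are $S$-\emph{families}, and the subdivision points $c_i=\pi\circ b_{i-1}$ of the two triangulations can cross over one another as the parameter in $S$ varies: near some $p\in S$ neither $(c_i)_{\mathrm{red}}\leq(\lambda^*c_i')_{\mathrm{red}}$ nor the opposite inequality holds, so the union of the two families of subdivision points does not define a family of skeletons (the putative superinterval between $c_i$ and $\lambda^*c_i'$ would have to change sign over $S$). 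The paper devotes the entire second half of its proof to exactly this case: it works locally on $S$, perturbs the offending subdivision point to a nearby $d^1$ lying on one definite side of both $c_i$ and $\lambda^*c_i'$, stipulates $T(\lambda)$ by the resulting commutative square, and then checks independence of the choice of $d^1$ by interposing a third perturbation staying away from the first two. Without this (or an equivalent device) your $T(\lambda)$ is simply not defined on all morphisms of the stack, and the same issue infects your verification of composition, which also presumes common refinements exist.

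A smaller point: your consistency check for the overlap of prescriptions (1) and (2) only addresses the degenerate situation where the morphism is simultaneously a refinement and skeleton-compatible (which forces $I=I'$); the check actually needed is that computing \eqref{eq:2} after compatibly refining \emph{both} skeletons returns the same map. Appealing to contractibility of the groupoid of refinements does not by itself establish this; one must observe that each $\mathrm{SP}_i$ gets replaced by a composite of transports along the pieces of the subdivision interleaved with tautological identifications involving $L|_{X_0}\cong\mathbb C$, and then invoke the gluing property of super parallel transport. You do cite the gluing property, but aim it only at composition of morphisms rather than at well-definedness. The remaining ingredients---composition via the structure maps of the gerbe, monoidality from freeness of the disjoint-union monoidal structure, and base-change naturality---agree with the paper.
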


\begin{proof}
  Initially, we will assume we can pick compatible refinements for
  (families of) triangulations of bordisms whenever needed, and
  explain at the end of the proof how to deal with the fact that such
  refinements do not always exist.
  
  Fix a morphism $\lambda\colon (\Sigma, \psi, I) \to (\Sigma', \psi',
  I')$ in $1\vert1\EBord(\mathfrak X)^{\mathrm{glob}}$ and compatible
  refinements for the triangulations of $\Sigma$ and $\Sigma'$.  This
  yields refinements $\bar I$, $\bar I'$ of $I$ respectively $I'$.  We
  can then express $\lambda$ as the composition
  \begin{equation*}
    \xymatrix{(\psi, I) \ar@{<-}[r]^{R_{I}^{\bar I}} \ar@{|->}[d] & (\psi,
      \bar I) \ar[r]^{\bar \lambda}\ar@{|->}[d] & (\psi',
      \bar I')\ar@{|->}[d] & (\psi', I') \ar@{<-}[l]_{R_{I'}^{\bar I'}} \ar@{|->}[d]\\
      S \ar@{=}[r] & S \ar[r]^f& S' & S' \ar@{=}[l]}
  \end{equation*}
  where $\bar\lambda$ is the morphism in $1\vert1\EBord(\mathfrak
  X)_1^{\mathrm{glob}}$ corresponding to the same torsor map as
  $\lambda$, but relating objects with different skeletons.  This
  fixes $T(\lambda)$.

  Of course, we need to check that taking this as a definition for
  $L(\lambda)$ is consistent, that is, independent on the choice of
  $\bar I$ and $\bar I'$.  Verifying this in the case when all
  triangulations involved admit compatible refinements boils down to
  checking that if the original triangulations of $\Sigma$ and
  $\Sigma'$ were already compatible, applying formula \eqref{eq:1}
  would give
  \begin{equation*}
    T(\lambda) \circ T(R_{I}^{\bar I}) =  T(R_{I'}^{\bar I'}) \circ T(\bar\lambda).
  \end{equation*}
  But this is easy to see.
  When calculating $T(\bar\lambda)$, each appearance of $\mathrm{SP}_i$ in
  \eqref{eq:2} gets replaced by a composition
  \begin{equation*}
    \mathrm{SP}^\vee_{i_k} \circ h_{i_k} \circ \dots \circ \mathrm{SP}^\vee_{i_1}
    \circ h_{i_1} \circ  \mathrm{SP}^\vee_{i_0},
  \end{equation*}
  where $\mathrm{SP}_{i_j}$, $0\leq j\leq k$, denotes parallel
  transport along a subdivision $J_{i_j} \subset J_i$ and the
  $h_{i_j}$ are tautological identifications involving $L_{i_j} =
  \mathbb C$.  Thus our claim follows from compatibility of super
  parallel transport with gluing of superintervals.

  Next, we verify that $L$ respects compositions of isometries, at
  least when compatible refinements can be chosen.  So let us fix
  composable morphisms as in the diagram below.
  \begin{equation*}
    \xymatrix{
      (\Sigma, \psi, I) \ar[r]^\lambda \ar@{|->}[d] \ar@/^{4.8ex}/[rr]_{\lambda''}
      & (\Sigma', \psi', I') \ar[r]^{\lambda'}\ar@{|->}[d]
      & (\Sigma'', \psi'', I'')\ar@{|->}[d]
      \\
      S \ar[r]^f\ar@/_{4.2ex}/[rr]^{f''}
      & S' \ar[r]^{f'}
      & S''}
  \end{equation*}
  We can assume the all skeletons are compatible.  Using \eqref{eq:2}
  and the structure maps of the gerbe, we see that (up to braiding),
  \begin{equation*}
    T(\lambda) \otimes f^*T(\lambda') = T(\lambda'') \otimes \Id_{f^*T(\Sigma')}.
  \end{equation*}
  It follows that
  \begin{equation*}
    T(\lambda'') = T(\lambda') \circ T(\lambda).
  \end{equation*}

  Next, suppose we have a morphism $\lambda\colon (\psi, I) \to (\psi',
  I')$ where compatible refinements of the underlying triangulations
  fail to exist.  Since every morphism in $1\vert1\EBord(\mathfrak
  X)_1^{\mathrm{skel}}$ can be expressed as the composition of a
  morphism covering the identity and a morphism involving pullback
  skeletons, it suffices to consider the case when $\lambda$ covers
  $\mathrm{id}\colon S\to S$.  Write $c_i$ for the common value of
  $\pi \circ b_{i-1} = \pi\circ a_i\colon S \times \mathbb R^{0\vert1}
  \to \Sigma$, and define $c_i'$ similarly.  Then the nonexistence of
  a common refinement means that there is a pair $c_i$,
  $\lambda^*c_i'\colon S \times \mathbb R^{0\vert1} \to \Sigma$
  ``crossing over'' one another; more precisely, in a Euclidean local
  chart, neither $(c_i)_{\mathrm{red}}\leq
  (\lambda^*c_i')_{\mathrm{red}}$ nor the opposite holds.  It suffices
  to define $T(\lambda)$ in a small neighborhood $S_p$ of each point
  $p \in S$ where that happens; assuming, for simplicity, that the
  triangulations $\{ c_j \}$, $\{ \lambda^*c_j' \}$ are identical
  except for the problematic index $i$, it suffices to analyze the
  situation in a small neighborhood in $\Sigma$ of the point $x =
  c_i(p)=\lambda^*c_i'(p)$.  Then we can choose $d^1\colon S_p
  \times\mathbb R^{0\vert1}\to \Sigma|_{S_p}$ sufficiently close to
  $c_i$ satisfying either $d(p) < c_i(p), \lambda^*c_i'(p)$ or the
  opposite inequality.  Denote by $I^1$, $I^1{}'$ the modifications of
  $I$, $I'$ obtained by replacing $c_i$, $\lambda^*c_i'$ with $d^1$.
  Then of course $I$ and $I^1$ admit a common refinement, and so do
  $I'$ and $I^1{}'$; moreover, $I^1$ and $I^1{}'$ are based on the
  same triangulation of $\Sigma$.  We have a commutative square
  \begin{equation}
    \label{eq:15}
    \begin{gathered}
      \xymatrix{(\Sigma, \psi,I^1) \ar[r]^{\lambda^1}\ar[d] & (\Sigma', \psi',
        I^1{}')
        \ar[d]\\
        (\Sigma, \psi, I) \ar[r]^{\lambda} & (\Sigma', \psi', I'),}
    \end{gathered}
  \end{equation}
  and this stipulates the value of $T$ on $\lambda\colon(\Sigma, \psi,
  I) \to (\Sigma', \psi', I')$; here, the unlabeled arrows refer to
  morphisms whose underlying torsor maps are the identity.  We need to
  see why this is independent on the choice of $d^1$.  Suppose we
  repeat the above procedure using a different choice $d^2\colon S_p
  \times \mathbb R^{0\vert1}\to \Sigma\vert_{S_p}$; to compare them,
  we can use a third $d^3\colon S_p \times \mathbb R^{0\vert1}\to
  \Sigma\vert_{S_p}$ (restricting, perhaps, to a smaller neighborhood
  $S_p$) which stays away from both $d^1$ and $d^2$.  Thus, we can
  assume without loss of generality that $d^1$ and $d^2$ stay away
  from one another.  We have a commutative diagram
  \begin{equation*}
    \xymatrix@R=1ex{
      &(\Sigma, \psi,I^1)\ar[dd] \ar[ld]\ar[r]^{\lambda^1}&(\Sigma', \psi',I^1{}')
      \ar[rd] \ar[dd]\\
      (\Sigma, \psi,I)&&&(\Sigma', \psi',I')\\
      &(\Sigma, \psi,I^2) \ar[lu]\ar[r]^{\lambda^2}&(\Sigma', \psi',I^2{}') \ar[ru]
    }
  \end{equation*}
  where the skeletons appearing in each triangle and the in middle
  square admit common refinements, and it follows that $T(\lambda)$,
  as prescribed by \eqref{eq:15}, is independent on the choice of
  $d^1$.  Similarly, we can reduce the verification that $T$ respects
  composition of morphisms to the case where all triangulations
  involved admit compatible refinements.

  So far, we have defined $T$ on fiberwise connected families of
  bordisms.  Since the symmetric monoidal structure of
  $1\vert1\EBord(\mathfrak X)_1^{\mathrm{glob}}$ is free, we are done.
\end{proof}

The functor $T$ is compatible with composition of bordisms in an
obvious way, and we will not spell out the definition of the
compositor and unitor promoting $T$ to a functor of internal
categories.

\subsection{On the choice of presentations}
\label{sec:choice-presentations}

We need to argue that $T$, as constructed above, depends only on the
gerbe with connection $\tilde{\mathfrak X} \to \mathfrak X$, and not
on the chosen Lie groupoid presentations for $\mathfrak X$ and
$\tilde{\mathfrak X}$.  To formulate this statement more precisely, we
introduce some notation.  Recall that proposition~\ref{prop:4}
justified the lack of reference to $X_1 \rightrightarrows X_0$ in the
notations $1\vert1\EBord(\mathfrak X)^{\mathrm{skel}}$ and
$1\vert1\EBord(\mathfrak X)^{\mathrm{glob}}$.  In this subsection, we
must be explicit about the choices of presentations, so we will write
\begin{equation*}
  1\vert1\EBord(X_\bullet) = 1\vert1\EBord(\mathfrak X)^{\mathrm{glob}}.
\end{equation*}

Then, what we have achieved in section~\ref{sec:gerbes-1vert1-twists}
is the construction, from a Lie groupoid $X_1 \rightrightarrows X_0$
and a central extension $L \to X_1$, of a functor of internal
categories
\begin{equation*}
  T_L\colon 1\vert1\EBord(X_\bullet) \to \mathrm{Alg}.
\end{equation*}
Suppose now that $X_1' \rightrightarrows X_0'$ and the central
extension $L' \to X_1'$ provide a second presentation of $\mathfrak X$
and the gerbe $\tilde{\mathfrak X}$.  By
\cite[proposition~4.15]{MR2817778}, there exists a Lie groupoid $X_1''
\rightrightarrows X_0''$ and a central extension $L'' \to X_1''$
together with a Morita morphism $X_\bullet'' \to X_\bullet$ and a
Morita morphism of central extensions $L'' \to L$, as well as similar
data for $L' \to X_1' \rightrightarrows X_0'$.  These Morita morphisms
are uniquely determined, up to unique natural isomorphism, by the
requirement that they induce the identity of $\tilde{\mathfrak X} \to
\mathfrak X$ \cite[section~2.6]{MR2817778}.

We now have a diagram as follows.
\begin{equation}
  \label{eq:22}
  \begin{gathered}
  \xymatrix{
    &
    1\vert1\EBord(X_\bullet'')
    \ar[ld] \ar[rd] \ar[dd]^{T_{L''}}
    & \\
    1\vert1\EBord(X_\bullet')
    \ar[rd]_{T_{L'}}
    &&
    1\vert1\EBord(X_\bullet)
    \ar[ld]^{T_L}
    \\ &
    \mathrm{Alg}
    &
  }
  \end{gathered}
\end{equation}
The claim that $T_L$ determines a twist functor $T =
T_{\tilde{\mathfrak X}}$ depending only on the gerbe $\tilde{\mathfrak
  X}$ over $\mathfrak X$ is formalized by the following statement.

\begin{proposition}
  In the above situation, there exist canonical $2$-morphisms making
  \eqref{eq:22} commute.
\end{proposition}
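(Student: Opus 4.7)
The plan is to establish the statement one leg of \eqref{eq:22} at a time. More precisely, it suffices to construct, for any Morita refinement $\phi\colon X''_\bullet \to X_\bullet$ of Lie groupoids covering $\id_{\mathfrak X}$ together with a compatible Morita morphism $\Phi\colon L'' \to L$ of central extensions covering $\id_{\tilde{\mathfrak X}}$, a canonical 2-isomorphism
\[
  \eta_\Phi\colon T_{L''} \Rightarrow T_L \circ \phi_*,
\]
where $\phi_*\colon 1\vert1\EBord(X''_\bullet) \to 1\vert1\EBord(X_\bullet)$ is the induced functor on bordism categories. Applying this to both Morita morphisms appearing in \eqref{eq:22} and pasting the resulting 2-isomorphisms will yield the required 2-morphism making the diagram commute.

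The functor $\phi_*$ sends an $X''$-torsor $(\pi\colon U \to \Sigma, \psi_0'', \psi_1'')$ to the $X$-torsor with the same submersion $U \to \Sigma$, anchor $\phi \circ \psi_0''$, and torsor map $\phi \circ \psi_1''$. In particular, any skeleton $I = \{I_i\}$ of the former is also a skeleton of the latter, and the associated maps $j_i\colon S \times \mathbb R^{0\vert1} \to U \times_\Sigma U$ are literally unchanged. The Morita morphism $\Phi$ provides, by definition, canonical isomorphisms of fibers $L''_f \cong L_{\phi \circ f}$ natural in $f$; tensoring these over $1 \leq i \leq n$ gives the candidate isomorphism
\[
  \eta_\Phi(\Sigma, \psi'', I)\colon T_{L''}(\Sigma, \psi'', I)
  \xrightarrow{\cong} T_L(\Sigma, \phi_*\psi'', I).
\]

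The main work is in verifying naturality of $\eta_\Phi$ with respect to morphisms of bordisms with skeleton. Using the reductions from the proof of the preceding proposition (decomposing an arbitrary morphism first as a composition of a refinement of skeletons with a morphism of bordisms equipped with compatible triangulations, and finally dealing with ``crossing'' triangulations via an auxiliary $d^k$), this comes down to the compatibility of $\Phi$ with the two ingredients of formula \eqref{eq:2}: super parallel transport along superintervals in $U$, and the gerbe multiplication maps $h_i$. The first follows because $\Phi$ is, by hypothesis, a morphism of central extensions \emph{with connection}; the second is the multiplicative compatibility built into the definition of a Morita morphism of central extensions. Both compatibilities are recalled in appendix~\ref{sec:twisted-superconn}.

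Canonicity of $\eta_\Phi$ follows from the fact that the pair $(\phi, \Phi)$ is itself unique up to unique 2-isomorphism among Morita refinements inducing $\id_{\tilde{\mathfrak X} \to \mathfrak X}$ (see \cite[section~2.6]{MR2817778}), combined with the manifest functoriality of the construction in $\Phi$. I expect the main obstacle to be purely notational: one must carefully track how $\Phi$ acts on each of the parallel-transport and gerbe-multiplication factors appearing in \eqref{eq:2}, and verify the coherence of these identifications across the various cases in the proof of the previous proposition. No new geometric input beyond what is already encoded in the notion of a Morita morphism of central extensions with connection should be required.
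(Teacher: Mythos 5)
Your proposal is correct and follows exactly the route the paper has in mind: the paper simply states that "the proof is just a verification that the construction of $T_L$ is natural with respect to internal functors between Lie groupoids" and omits all details, which you supply. One small caveat: the pushforward $\phi_*$ of a torsor is in general the associated-bundle construction rather than literally "the same $U$ with composed anchor" (the diffeomorphism condition on $(\pr_1,\psi_1)$ need not survive composition with $\phi$), but since the skeleton transports along $U \to \phi_!U$ and the fibers $L_{j_i}$ are still canonically identified via $\Phi$, this does not affect your argument.
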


The proof is just a verification that the construction of $T_L$ is
natural with respect to internal functors between Lie groupoids (of
which Morita morphisms are particular cases), so we will omit further
details.

\begin{remark}
  Not every gerbe $\tilde{\mathfrak X}$ over $\mathfrak X$ is
  necessarily presentable as a central extension of a given
  presentation $X_1 \rightrightarrows X_0$; this condition is
  equivalent to $\tilde{\mathfrak X}$ admitting a trivialization when
  restricted to $X_0$ \cite[proposition~4.12]{MR2817778}.  Thus, given
  $\tilde{\mathfrak X}$, we need to pick a sufficiently fine groupoid
  presentation of $\mathfrak X$.  In this paper, we never let
  $\tilde{\mathfrak X}$ vary or make structural statements about the
  groupoid of twists $1\vert1\ETw(\mathfrak X)$, so we are allowed to
  fix, once and for all, a Lie groupoid $X_1 \rightrightarrows X_0$
  suitable for $\tilde{\mathfrak X}$.
\end{remark}

\subsection{The restriction to $\mathfrak K(\mathfrak X)$}
\label{sec:restriction-to-K(X)}

We denote by $\mathfrak K(\mathfrak X)$ the substack of closed and
connected bordisms in $1\vert1\EBord(\mathfrak X)^{\mathrm{glob}}$.
The twist functor $T_{\tilde{\mathfrak X}}$ determines, by
restriction, a line bundle $Q$ on $\mathfrak K(\mathfrak X)$.  Our
goal in this section is to give a detailed description of $Q$, in
terms of a Čech cocycle for Deligne cohomology representing the gerbe
$\tilde{\mathfrak X} \to \mathfrak X$.

Let us start fixing some notation.  The orbifold $\mathfrak X$ will be
presented, as before, by an étale Lie groupoid $X_1 \rightrightarrows
X_0$, and the gerbe $\tilde{\mathfrak X} \to \mathfrak X$ will be
presented by a Čech 2-cocycle for groupoid cohomology with
coefficients in the Deligne complex $\mathbb C^\times(3)$,
\begin{equation*}
  \mathbb C^\times \xrightarrow{d\log} \Omega^1 \to \Omega^2.
\end{equation*}
More explicitly, this cocycle is given by a triple
\begin{equation*}
  (h, A, B) \in C^\infty(X_2, \mathbb C^\times) \times
  \Omega^1_{\mathbb C}(X_1) \times \Omega^2_{\mathbb C}(X_0)
\end{equation*}
satisfying the cocycle conditions
\begin{gather}
  \label{eq:9}
  h(a,b) h(a,bc)^{-1} h(ab,c) h(b,c)^{-1} = 1 \text{ in }
  C^\infty(X_3, \mathbb C^\times), \\
  \label{eq:10}
  \pr_2^*A + \pr_1^*A - c^*A = d\log h \text{ in } \Omega^1(X_2),\\
  \label{eq:11}
  t^*B - s^*B = dA \text{ in } \Omega^2(X_1),
\end{gather}
where $X_n = X_1 \times_{X_0} \cdots \times_{X_0} X_1$ is the space of
sequences of $n$ composable morphisms.

To an object $(\psi,I) \in \mathfrak K(\mathfrak X)_S$ as above the
fibered functor
\begin{equation*}
  Q = T_{\tilde{\mathfrak X}}\vert_{\mathfrak K(\mathfrak X)}
  \colon \mathfrak K(\mathfrak X) \to \mathrm{Vect}
\end{equation*}
assigns the trivial line bundle over $S$; the interesting discussion,
of course, concerns morphisms.  Fix a second object $(\psi',I') \in
\mathfrak K(\mathfrak X)_{S'}$ and a morphism $\lambda\colon (\psi,I)
\to (\psi',I')$ over $f\colon S \to S'$.  To that $Q$ assigns a linear
map between the corresponding lines, which we identify with a function
$Q(\lambda)\colon S \to \mathbb C^\times$.  We consider the two
special cases of section~\ref{sec:gerbes-1vert1-twists}, using the
notation fixed there.

\begin{proposition}
  If $\lambda$ is a refinement of skeletons, then $Q(\lambda) = 1$.
  If the skeletons of $K$, $K'$ are compatible, then $Q(\lambda) \in
  C^\infty(S, \mathbb C^\times)$ is given by
  \begin{equation}
    \label{eq:3}
    Q(\lambda) =
    \exp \left(
      \sum_{1\leq i\leq n}\int_{J_i}\vol_D\, \langle D, \psi_1^*A\rangle 
    \right)
    \prod_{1\leq i\leq n} \left.
      \frac{\psi_2^*h(\tilde a_i, j_i)}{\psi_2^*h(\lambda^*j_i', \tilde b_{i-1})}
    \right\vert_{S\times\{ 0 \}}.
  \end{equation}
\end{proposition}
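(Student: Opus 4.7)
The first assertion is immediate from the construction in section~\ref{sec:gerbes-1vert1-twists}: when $\lambda$ is a refinement, the additional tensor factors appearing in $T(\Sigma, \psi, I)$ vs.\ $T(\Sigma, \psi, I')$ are of the form $L_{j}$ with $j$ an identity morphism of the groupoid $U\times_\Sigma U \rightrightarrows U$, hence canonically $\mathbb C$, and $T(\lambda)$ is the natural identification inserting these trivial factors. Under the identification with $C^\infty(S, \mathbb C^\times)$ this is the constant function $1$.

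For the second assertion, my plan is to unpack the composition \eqref{eq:2} explicitly using the Deligne cocycle $(h, A, B)$. First, recall from section~\ref{sec:notation} that super parallel transport on $\psi_1^*L$ over $J_i$ is performed along the left-invariant vector field $D = \partial_\theta - \theta\partial_t$. In the local trivialization afforded by the Deligne data the connection is $d + A$, so $\nabla_D = D + \langle D, A\rangle$ and super parallel transport $\mathrm{SP}_i$ is the solution operator of the ODE $(D + \langle D, \psi_1^*A\rangle)s = 0$ along $J_i$; integrating produces the exponential factor $\exp(\int_{J_i}\vol_D\,\langle D, \psi_1^*A\rangle)$, which is the global prefactor in \eqref{eq:3}. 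Second, the gerbe multiplication $L_g\otimes L_{g'}\to L_{gg'}$ is, in the given trivializations, multiplication by the Čech cocycle $h$; thus each gerbe-multiplication map $h_i$ in \eqref{eq:2}, rewritten via the adjunctions $L_f\otimes L_f^\vee\cong\mathbb C$ used in \eqref{eq:2} to turn tensor products into ratios, contributes a factor of the form $h(\tilde a_i, j_i)/h(\lambda^*j_i', \tilde b_{i-1})$. Evaluation at $S\times\{0\}$ arises because the $j_i$ are $(S\times\mathbb R^{0\vert1})$-points while $Q(\lambda)$ must be a function on $S$.

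What makes the output a function at all is the restriction to $\mathfrak K(\mathfrak X)$: the chain \eqref{eq:2} begins with $L_{\tilde a_0}^\vee$ and ends with $L_{\tilde b_n}^\vee$, and the supercircle closure $\pi\circ b_n = \pi\circ a_0$ provides a canonical identification of these two lines, turning the whole composition into a self-map of a canonically trivial line, i.e., a scalar. Multiplying together all the $\mathrm{SP}_i$ and $h_i$ contributions and taking the adjoint as prescribed in \eqref{eq:1} then yields \eqref{eq:3}.

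The main obstacle is combinatorial bookkeeping rather than anything conceptually deep: one must track carefully which of the $h_i$'s appear via a direct map and which via an adjoint (controlling whether $h$ ends up in the numerator or denominator), and one must verify that the cyclic identification coming from the supercircle closure matches up the indices in the product over $1\leq i\leq n$ of \eqref{eq:3}. The formula relating gerbe multiplication to the function $h$ in the chosen trivialization is essentially the computation leading to \eqref{eq:14}, adapted to the present setup.
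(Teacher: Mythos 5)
Your argument follows the same route as the paper's own (very terse) proof: unpack the composition \eqref{eq:2}, match each $\mathrm{SP}_i$ with one summand in the exponential via the super fundamental theorem of calculus (proposition~\ref{prop:10}), and match each gerbe multiplication $h_i$ with one factor of the product. Two points of divergence are worth flagging. First, there is an internal sign inconsistency in your derivation of the exponential factor: if the connection is $d+A$ and $\mathrm{SP}_i$ solves $(D+\langle D,\psi_1^*A\rangle)s=0$, then integrating $D\log s=-\langle D,\psi_1^*A\rangle$ with proposition~\ref{prop:10} yields $\exp\bigl(-\int_{J_i}\vol_D\,\langle D,\psi_1^*A\rangle\bigr)$, not the plus sign you claim; the paper gets the plus sign by taking the relevant connection form to be $d-A$. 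You arrive at the correct formula, but the intermediate step as written does not produce it. Second, the bookkeeping you defer as ``combinatorial'' is exactly the content the paper does state: the adjunctions $L_f^\vee\cong L_{f^{-1}}$ used to rewrite \eqref{eq:2} in the form \eqref{eq:1} introduce factors $h(f,f^{-1})$, and the paper's proof consists essentially of the observation that all such factors cancel, leaving only the ratio $h(\tilde a_i,j_i)/h(\lambda^*j_i',\tilde b_{i-1})$. Neither issue changes the structure of the argument, which is the paper's.
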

Here, $D \in C^\infty(TU)$ is a choice of Euclidean vector field for
the Euclidean structure induced by $\pi\colon U \to K$, and $\vol_D$
the corresponding volume form (cf.\
appendix~\ref{sec:fund-theor-calc}).  Moreover, $\psi_2\colon U
\times_KU\times_KU \to X_2$ denotes the map induced by $\psi_1$.

\begin{proof}
  The first claim is obvious.  As to the second claim, each $h_i$ in
  \eqref{eq:2} contributes one factor in the product, and each
  $\mathrm{SP}_i$ contributes one term in the summation.  In fact, we
  see easily from proposition \ref{prop:10} that super parallel
  transport on along a superinterval $J$ with respect to the
  connection form $d - A$ is given by $\exp(\int_J \vol_D \langle
  D,A\rangle)$.  All terms of the from $h(f,f^{-1})$ introduced by
  identifications $L_f^\vee \cong L_{f^{-1}}$ cancel out.
\end{proof}

\begin{remark}
  From the data of a Čech-cocycle presentation of a gerbe with
  connection, \textcite{MR2240409} constructed a line bundle with
  connection on the loop orbifold $L\mathfrak X$.  Our construction
  incorporates a super analogue of this transgression procedure:
  compare \eqref{eq:3} with their definition~4.2.  Our proof that $T$
  is a functor (or, rather, its purely bosonic analog) provides a more
  geometric explanation for Lupercio and Uribe's calculations with
  Deligne Čech cocycles.
\end{remark}

\begin{remark}
  The usual transgression of gerbes produces, in fact, a
  $\Diff^+(S^1)$-equivariant line bundle on the loop space
  \cite[proposition 6.2.3]{MR2362847}.  Our construction gives a line
  bundle on the moduli stack of supercircles over $\mathfrak X$, and
  not just on a ``super loop space'', so the super analogue of
  $\Diff^+(S^1)$-equivariance is automatically built into our
  discussion.
\end{remark}

\section{Dimensional reduction of twists}
\label{sec:dimens-reduct-twists}

In this section, we show that dimensional reduction of the
$1\vert1$-twists from section~\ref{sec:twists} recovers the
$0\vert1$-twists described in section~\ref{sec:tu-xu-de-rham}.  So we
start with the twist functor
\begin{equation*}
  T_{\tilde{\mathfrak X}} \in 1\vert1\ETw(\mathfrak X)
\end{equation*}
associated to a gerbe with connection $\tilde{\mathfrak X}$, and
describe its pullback to $0\vert1\EBord^{\mathbb T}(\Lambda\mathfrak
X)$ by the functor in \eqref{eq:21}.  As we will see below, that
corresponds to the data of a line bundle with superconnection on
$\Lambda\mathfrak X$.  We will then find that it is flat, and hence,
by proposition~\ref{prop:3}, descends to a line bundle
$T'_{\tilde{\mathfrak X}}$ on $\mathfrak B (\Lambda\mathfrak X) = \Pi
T\Lambda\mathfrak X\sslash\Isom(\mathbb R^{0\vert1})$, or, in other
words, a $0\vert1$-dimensional Euclidean twist over $\Lambda\mathfrak
X$.  We call $T'_{\tilde{\mathfrak X}}$ the dimensional reduction of
$T_{\tilde{\mathfrak X}}$.

\begin{theorem}
  \label{thm:3}
  Let $\mathfrak X$ be an orbifold, $\tilde{\mathfrak X}$ a gerbe with
  connection, and $\alpha \in H^3(\mathfrak X;\mathbb Z)$ its
  Dixmier-Douady class.  Then the twist $T'_{\tilde{\mathfrak X}} \in
  0\vert1\ETw(\Lambda\mathfrak X)$ obtained by dimensional reduction
  of the twist $T_{\tilde{\mathfrak X}} \in 1\vert1\ETw(\mathfrak X)$
  is isomorphic to the twist $T_\alpha$ from theorem~\ref{thm:2}.
\end{theorem}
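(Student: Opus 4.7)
By proposition~\ref{prop:3}, specifying $T'_{\tilde{\mathfrak X}}$ up to equivalence amounts to specifying a flat super vector bundle with superconnection on $\Lambda\mathfrak X$, so the statement reduces to identifying the dimensional reduction with $(L', \nabla'+\Omega\wedge\cdot)$ from section~\ref{sec:tu-xu-de-rham}. The key input is that $T_{\tilde{\mathfrak X}}$, restricted to the substack $\mathfrak K(\mathfrak X)$ of supercircles, admits the very concrete description~\eqref{eq:3}, and the functor $0\vert1\EBord^{\mathbb T}(\Lambda\mathfrak X)\to 1\vert1\EBord(\mathfrak X)$ in~\eqref{eq:21} factors through $\mathfrak K(\mathfrak X)$. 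So the plan is to unwind the pull-push in~\eqref{eq:21} and match the result term by term against the Deligne-cocycle description of $(L',\nabla')$ in~\eqref{eq:14}, together with the curvature correction $\Omega\wedge\cdot$.

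The first thing I would do is unpack an $S$-point of $0\vert1\EBord^{\mathbb T}(\Lambda\mathfrak X)$ into an $S$-family of infinitesimally short supercircles mapping to $\mathfrak X$: such a point amounts to an $S$-map to $\Pi T\hat X_0$, and the associated $X_\bullet$-torsor has a single ``monodromy'' datum valued in $\hat X_0 \subset X_1$. Plugging this into~\eqref{eq:2} leaves one tensor factor $L_g$, identifying the underlying line bundle with $L' = L\vert_{\hat X_0}$. Next, applied to an $\hat X_1$-conjugation $f\colon (x,g)\to (x',g')$, formula~\eqref{eq:3} produces, via the cocycle identity~\eqref{eq:9}, exactly the function $H$ of~\eqref{eq:14}, recovering the transition isomorphisms of $L'$; the parallel-transport exponential evaluated at $S\times\{0\}$ yields the degree-one piece $A\vert_{\hat X_0}$ of $\nabla' = d + A\vert_{\hat X_0}$.

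The final, and most delicate, step is to show that the odd part of the superconnection---picked up when one allows the family of short supercircles to vary in the $\Pi T$-direction of $\Pi T\Lambda\mathfrak X$---equals wedge product with $\Omega$. Expanding~\eqref{eq:3} one order beyond the strict length-zero limit produces, via a variational computation, a term involving $dA = t^*B - s^*B$; the $\mathbb T$-descent then averages over the supercircle fiber, and a fiberwise super-Stokes argument turns this into multiplication by the pullback of the $3$-curvature $\Omega$ to the inertia orbifold. Naturality in the Lie groupoid presentation follows from section~\ref{sec:choice-presentations}, and $\mathbb T$-invariance of the resulting cocycle is a formal consequence of the rotation-invariance of~\eqref{eq:9}--\eqref{eq:11}. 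I expect the main obstacle to be this last step: formula~\eqref{eq:3} only mentions $A$ and $h$, while $B$ only enters derivatively through $dA = t^*B - s^*B$, so isolating a clean $\Omega\wedge\cdot$ contribution requires carefully distinguishing two separate odd/circle parameters---the one coming from $\Pi T$ on $\Lambda\mathfrak X$ and the one from the $\mathbb T$-fiber along which we descend---and a somewhat delicate supergeometric fiberwise-integration argument to combine them.
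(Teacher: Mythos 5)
Your overall strategy is the paper's: restrict $T_{\tilde{\mathfrak X}}$ to $\mathfrak K(\mathfrak X)$, pull back along the right-hand functor of \eqref{eq:21}, and identify the resulting line bundle with superconnection on $\Lambda\mathfrak X$ with $(L',\nabla'+\Omega\wedge\cdot)$ using the explicit formula \eqref{eq:3}. The fiber identification with $L_g$ and the appearance of $h(f,g)/h(fgf^{-1},f)$ in the transition isomorphism are also correct. However, your account of how the superconnection, and in particular the $\Omega\wedge\cdot$ term, arises contains a genuine gap---precisely at the step you flag as the main obstacle. Note first that the relevant family $K_{\check x}$ consists of supercircles of length $1$, not ``infinitesimally short'' ones; this is not cosmetic, because the holonomy of $A$ around the full circle is exactly what produces the $B$-dependence.

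Concretely, two separate computations are being conflated. The transition isomorphism $H\colon s^*Q'\to t^*Q'$ over $\Pi T\hat X_1$ contains the holonomy factor $\exp(\int_{[0,1]}\vol_D\,\langle D,\beta^*A\rangle)$ over the \emph{whole} circle, which by lemma~\ref{lem:2} and the fundamental theorem of calculus equals $\exp(\widetilde{dA}) = t^*\exp(\tilde B)/s^*\exp(\tilde B)$; it is \emph{not} a contribution of $A\vert_{\hat X_0}$ to the connection, and it means $Q'$ agrees with $\pi^*L'$ only after the gauge transformation $\exp(-\tilde B)$ of \eqref{eq:25}. The superconnection itself is instead read off from the infinitesimal generator of the $\mathbb R^{1\vert1}$-rotation action on $K_{\check x}$ (figure~\ref{fig:step-by-step}), where the cocycle $T(R^{[1,1+r]}_{\check x}\to R^{[0,r]}_{\check x}) = 1+\theta\tilde A$ yields $d + A\vert_{\hat X_0}$ on $Q'$; the term $\Omega=dB$ then appears solely because conjugating $d+A$ by $\exp(-\tilde B)$ gives $d+A+dB$. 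Your proposed mechanism---a fiberwise super-Stokes argument converting a $dA = t^*B-s^*B$ contribution into $\Omega\wedge\cdot$---cannot work: $dA$ restricts to zero on $\hat X_0$ (since $s=t$ there), and $\Omega=dB$ is a $3$-form on $X_0$ that is not obtainable from $dA$ by integration over the circle fiber. Without the gauge transformation \eqref{eq:25}, the curving $B$ never enters nonderivatively and the identification with the complex \eqref{eq:20} fails.
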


This means, in particular, that
\begin{equation*}
  0\vert1\EFT^{T'_{\tilde{\mathfrak X}}\otimes T_i}[\Lambda\mathfrak X] \cong
  K^{i+\alpha}(\mathfrak X) \otimes \mathbb C,
\end{equation*}
and suggests that $T_{\tilde{\mathfrak X}}$ is the correct
$1\vert1$-twist to represent $\alpha$-twisted $K$-theory in the sense
of \eqref{eq:18}.

The proof of the theorem (and the flatness claim necessary to state
it) will occupy the remainder of this section.  Before getting
started, we record a technical lemma.

\begin{lemma}
  \label{lem:2}
  Let $X$ be an ordinary manifold, $\mathrm{ev}\colon \Pi TX \times
  \mathbb R^{0\vert1} \to X$ be evaluation map, and write $\tilde
  \omega$ for the function on $\Pi TX$ corresponding to the
  differential form $\omega \in \Omega^n(X)$.  Then we have
  \begin{equation*}
    \langle (\partial_\theta)^{\wedge n}, \mathrm{ev}^* \omega\rangle =
    \pm n!(\tilde\omega + \theta \widetilde{d\omega}),
  \end{equation*}
  where the sign $\pm$ is $-1$ if $n \equiv 2, 3$ mod $4$ and $+1$
  otherwise.
\end{lemma}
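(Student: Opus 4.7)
The statement is local on $X$, so I would prove it by direct computation in a coordinate chart. Pick coordinates $x^i$ on $X$, giving coordinates $(x^i,\psi^i)$ on $\Pi TX$ with $\psi^i$ odd, and let $\theta$ denote the odd coordinate on $\mathbb R^{0\vert1}$. Under the tautological identification $\Omega^*(X) \cong C^\infty(\Pi TX)$, the form $\omega = \frac{1}{n!}\omega_{i_1\cdots i_n}\,dx^{i_1}\wedge\cdots\wedge dx^{i_n}$ corresponds to $\tilde\omega = \frac{1}{n!}\omega_{i_1\cdots i_n}\psi^{i_1}\cdots\psi^{i_n}$. From the adjunction defining the mapping stack, $\mathrm{ev}^*x^i = x^i + \theta\psi^i$, so $\mathrm{ev}^*(dx^i) = dx^i + d\theta\cdot\psi^i - \theta\cdot d\psi^i$, the minus sign arising when the Leibniz rule is applied to the odd function $\theta$.

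Next, I would expand $\mathrm{ev}^*\omega$ as a wedge product and single out the $(d\theta)^n$-component, since lower powers of $d\theta$ contribute nothing under an $n$-fold contraction by $\partial_\theta$. Because $d\theta$ is even, it commutes with each $\psi^i$ and with itself, and the top piece simplifies to $\frac{1}{n!}\omega_{i_1\cdots i_n}(x+\theta\psi)\,(d\theta)^n\,\psi^{i_1}\cdots\psi^{i_n}$. A short induction using that $\iota_{\partial_\theta}$ is an odd derivation with $\iota_{\partial_\theta}(d\theta)=1$ yields $(\iota_{\partial_\theta})^n(d\theta)^n = n!$. Finally, Taylor expanding $\omega_{i_1\cdots i_n}(x+\theta\psi) = \omega_{i_1\cdots i_n}(x) + \theta\,\psi^k\partial_k\omega_{i_1\cdots i_n}(x)$ (higher $\theta$-powers vanish because $\theta^2=0$) separates the answer into a $\theta$-independent piece matching $n!\tilde\omega$ and a $\theta$-linear piece matching $n!\theta\widetilde{d\omega}$, once one recognizes $\partial_k\omega_{i_1\cdots i_n}\psi^k\psi^{i_1}\cdots\psi^{i_n}$ as the function-correspondent of $d\omega$.

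The sign $(-1)^{n(n-1)/2}$, equal to $+1$ for $n \equiv 0,1 \pmod 4$ and $-1$ for $n \equiv 2,3 \pmod 4$, comes from the Koszul sign relating the pairing $\langle(\partial_\theta)^{\wedge n},\cdot\rangle$ to the iterated contraction $(\iota_{\partial_\theta})^n$, i.e.\ from comparing the super-wedge product of $n$ copies of an odd vector with its $n$-fold symmetric dual. The hard part of the proof is keeping track of this sign together with the various Koszul factors produced by moving $\psi^i$, $\theta$, and $d\theta$ past one another; apart from that bookkeeping, the argument is entirely a coordinate-level verification.
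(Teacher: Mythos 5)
Your proposal is correct and follows essentially the same route as the paper's proof: both hinge on the formula $\mathrm{ev}^\sharp f = \tilde f + \theta D\tilde f$, expand $\mathrm{ev}^*\omega$ as a product of pulled-back one-forms, and observe that only the terms in which every contraction by $\partial_\theta$ hits a $d\theta$ survive, producing the factor $n!$, the Taylor term $\theta\,\widetilde{d\omega}$, and the Koszul sign $(-1)^{n(n-1)/2}$. The only difference is organizational (local coordinates and isolating the $(d\theta)^n$-component, versus the paper's reduction to decomposable forms $f_0\,df_1\cdots df_n$ with a term-by-term iteration of $i_{\partial_\theta}$), and your placement of the overall sign in the comparison between $\langle(\partial_\theta)^{\wedge n},\cdot\rangle$ and $(\iota_{\partial_\theta})^n$ rather than in the commutation of $i_{\partial_\theta}$ past the remaining factors is an equivalent bookkeeping of the same convention-dependent sign.
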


\begin{proof}
  It suffices to prove the lemma for $\omega = f_0df_1\cdots df_n$,
  where $f_i\in C^\infty(X)$.  The action $\mu\colon \Pi TX \times
  \mathbb R^{0\vert1} \to \Pi TX$ is given by the formula
  \begin{equation*}
    \mu^\sharp\colon \tilde \omega
    \mapsto \tilde \omega + \theta D\tilde{\omega},
  \end{equation*}
  where $D$ denotes the de Rham vector field on $\Pi TX$.  Thus
  \begin{align*}
    \mathrm{ev}^*\omega
    &= (f_0 + \theta Df_0) \prod_{1\leq i\leq n}
      d(f_i+\theta Df_i) \\
    &= (f_0 + \theta Df_0) \prod_{1\leq i\leq n}
      (df_i+ d\theta Df_i +\theta dDf_i).
  \end{align*}
  Writing $F_i = df_i+ d\theta Df_i +\theta dDf_i$, we have
  \begin{equation*}
    i_{\partial_\theta}\mathrm{ev}^*\omega = (-1)^{i-1} \sum_{1\leq i\leq
        n}  (f_0+\theta Df_0) F_1 \cdots F_{i-1} Df_i F_{i+1} \cdots F_n.
  \end{equation*}
  Contracting an expression like the one under the summation sign with
  $\partial_\theta$ produces as many new terms as there are $F_i$'s,
  and, in each of those, one of the $F_i$'s get converted into a
  $Df_i$.  Note also that commuting $i_{\partial_\theta}$ with any
  $F_i$ or $Df_i$ produces a minus sign.  Iterating this process, we
  find
  \begin{align*}
    (i_{\partial_\theta})^n\mathrm{ev}^*\omega
    & = (-1)^{0+1+\cdots+n-1} n! (f_0+\theta Df_0) Df_1 \cdots Df_n\\
    &= \pm n!(\tilde\omega + \theta \widetilde{d\omega}). \qedhere
  \end{align*}
\end{proof}

\begin{corollary}
  \label{cor:1}
  For $D$ the de Rham vector field on $\Pi TX$, $\mu\colon \Pi TX
  \times \mathbb R^{0\vert1} \to \Pi TX$ the action map, $\pi\colon
  \Pi TX \to X$ the projection, and the remaining notation as in the
  lemma,
  \begin{equation*}
    \mu^*\langle D^{\wedge n}, \pi^*\omega\rangle =
    \pm n!(\tilde\omega + \theta \widetilde{d\omega}).
  \end{equation*}
\end{corollary}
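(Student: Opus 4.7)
The plan is to derive the corollary directly from Lemma~\ref{lem:2} by exploiting two compatibilities between $\mu$ and $\mathrm{ev}$. First I would observe that the evaluation map factors through the action: under the identification $\Pi TX = \underline{\mathrm{SM}}(\mathbb R^{0\vert1}, X)$, for $\phi \in \Pi TX$ we have $\mathrm{ev}(\phi,\theta) = \phi(\theta) = (\mu(\phi,\theta))(0) = \pi(\mu(\phi,\theta))$, so $\pi \circ \mu = \mathrm{ev}$, and in particular $\mu^* \pi^* \omega = \mathrm{ev}^* \omega$.

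Next I would recall that $D$ is the infinitesimal generator of the $\mathbb R^{0\vert1}$-action $\mu$; equivalently, $\partial_\theta$ on $\Pi TX \times \mathbb R^{0\vert1}$ and $D$ on $\Pi TX$ are $\mu$-related. This is essentially the content of the formula $\mu^\sharp \tilde\omega = \tilde\omega + \theta D\tilde\omega$ already used in the proof of Lemma~\ref{lem:2}. Since interior product with a vector field commutes with pullback along a map for which the vector fields are related (a purely local statement that works for odd and even vector fields alike), we may iterate to obtain
\begin{equation*}
  \mu^* \langle D^{\wedge n}, \pi^*\omega\rangle
  = \langle (\partial_\theta)^{\wedge n}, \mu^*\pi^*\omega\rangle
  = \langle (\partial_\theta)^{\wedge n}, \mathrm{ev}^*\omega\rangle.
\end{equation*}
The right-hand side is $\pm n!(\tilde\omega + \theta \widetilde{d\omega})$ by Lemma~\ref{lem:2}, with the stated sign convention.

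There is no real obstacle: the only point demanding attention is the iteration of interior product through an odd vector field, but since $i_{\partial_\theta}\mu^* = \mu^*i_D$ holds as an identity of operators on forms, applying this $n$ times gives the corresponding identity for $(i_{\partial_\theta})^n$. Whichever explicit convention is used to express $\langle D^{\wedge n}, -\rangle$ as a (signed, normalized) iterate of $i_D$ — and analogously for $\partial_\theta$ — the same convention applies on both sides, so the compatibility with $\mu^*$ is automatic and no spurious signs appear.
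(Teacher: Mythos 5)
Your proposal is correct and is essentially the argument the paper intends: the corollary is stated without proof precisely because it follows from Lemma~\ref{lem:2} via $\mathrm{ev}=\pi\circ\mu$ and the fact that $\partial_\theta$ is $\mu$-related to $D$ (which uses $D^2=0$, as encoded in $\mu^\sharp\tilde\omega=\tilde\omega+\theta D\tilde\omega$), so that $\mu^*i_D=i_{\partial_\theta}\mu^*$ iterates to give $\mu^*\langle D^{\wedge n},\pi^*\omega\rangle=\langle(\partial_\theta)^{\wedge n},\mathrm{ev}^*\omega\rangle$. You have simply made explicit the steps the paper leaves implicit.
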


\subsection{Review of dimensional reduction}
\label{sec:remind-dimens-reduct}

In this subsection we recall the main points about our dimensional
reduction procedure, also fixing the notation.  See
\cite{arXiv:1703.00314} for the complete story.

It is sufficient to describe the effect of the internal functors
\eqref{eq:21} on the corresponding stacks of closed and connected
bordisms, which we denote
\begin{equation}
  \label{eq:24}
  \mathfrak B(\Lambda\mathfrak X)
  \xleftarrow{\mathcal L} \mathfrak B^{\mathbb T}(\Lambda\mathfrak X)
  \xrightarrow{\mathcal R} \mathfrak K(\mathfrak X).
\end{equation}
The left stack was already introduced in
section~\ref{sec:twisted-de-rham}, and is given by
\begin{equation*}
  \mathfrak B(\Lambda\mathfrak X) = \underline{\Fun}_{\mathrm{SM}}(\mathbb
  R^{0\vert1}, \Lambda\mathfrak X)\sslash\mathrm{Isom}(\mathbb R^{0\vert1}).
\end{equation*}
The middle stack is defined as
\begin{equation*}
  \mathfrak B^{\mathbb T}(\Lambda\mathfrak X) = \underline{\Fun}_{\mathrm{SM}}(\mathbb
  R^{0\vert1}, \Lambda\mathfrak X)\sslash\mathrm{Isom}(\mathbb R^{1\vert1}),
\end{equation*}
and the map $\mathcal L$ is induced by the group homomorphism
\begin{equation*}
  \Isom(\mathbb R^{1\vert1}) = \mathbb R^{1\vert1} \rtimes \mathbb Z/2
  \to \mathbb R^{0\vert1} \rtimes \mathbb Z/2 = \Isom(\mathbb R^{0\vert1}).
\end{equation*}

Next we turn to a brief description of the map $\mathcal R$, focusing
on our case of interest.  We fix an étale Lie groupoid presentation
$X_1 \rightrightarrows X_0$ for $\mathfrak X$, so that we also get
presentations
\begin{equation*}
  \Pi T X_1 \rightrightarrows \Pi T X_0, \quad
  \hat X_1 \rightrightarrows \hat X_0, \quad
  \Pi T\hat X_1 \rightrightarrows \Pi T\hat X_0,
\end{equation*}
of $\Pi T\mathfrak X$, $\Lambda\mathfrak X$ and $\Pi T\Lambda\mathfrak
X$ respectively.  Then we have an atlas
\begin{equation*}
  \check x\colon \Pi T\hat X_0 \to \mathfrak B^{\mathbb T}(\Lambda \mathfrak X).
\end{equation*}
Now, we want to describe the $\Pi T\hat X_0$-family classified by the
map $\mathcal R \circ \check x$, which we will denote $K_{\check x}$.
Chasing through the construction of \cite{arXiv:1703.00314}, we find
that
\begin{equation*}
  K_{\check x} = (K, \psi\colon K \to \mathfrak X, I)
  \in \mathfrak K(\mathfrak X)
\end{equation*}
is given, in the language of torsors, by the following data:
\begin{enumerate}
\item the trivial family $K = \Pi T \hat X_0 \times \mathbb
  T^{1\vert1}$ of length $1$ supercircles, together with the standard
  covering $U = \Pi T\hat X_0 \times \mathbb R^{1\vert1}\to K$,
\item the map $\psi_0\colon U \to X_0$ given by the composition
  \begin{equation*}
    U = \Pi T\hat X_0 \times \mathbb R^{1\vert1}
    \to \Pi T\hat X_0 \times \mathbb R^{0\vert1}
    \xrightarrow{\mathrm{ev}} \hat X_0
    \overset p \twoheadrightarrow X_0,
  \end{equation*}
\item the map $\psi_1\colon U \times_K U \to X_1$ which, over the
  component of points differing by $n$ units, is the $n$-fold iterate
  of
  \begin{equation*}
    \alpha\colon \Pi T\hat X_0 \times \mathbb R^{1\vert1} \to \Pi T\hat X_0
    \times \mathbb R^{0\vert1} \xrightarrow{\mathrm{ev}} \hat X_0
    \overset i \hookrightarrow X_1,
  \end{equation*}
\item a skeleton we may choose to be $\Pi T\hat X_0 \times
  [0,1]\subset U$.
\end{enumerate}

\begin{remark}
  To arrive at the above description of $K_{\check x}$, it is helpful
  to note that $\mathfrak B^{\mathbb T}(\Lambda\mathfrak X)$ admits a more
  geometrical formulation (cf.\ \cite[section~3.2]{arXiv:1703.00314}),
  where the map $\check x$ classifies the $\Pi T\hat X_0$-family of
  gadgets given by
  \begin{enumerate}
  \item the trivial family of Euclidean $0\vert1$-manifolds $\Sigma =
    \Pi T\hat X_0 \times \mathbb R^{0\vert1} \to \Pi T\hat X_0$,
  \item the trivial $\mathbb T$-bundle $P = \Pi T\hat X_0 \times
    \mathbb T^{1\vert1} \to \Sigma$ with the standard principal
    connection, and
  \item the map $\Sigma \to \Lambda\mathfrak X$ given by the
    composition
    \begin{equation*}
      \Sigma = \Pi T\hat X_0 \times \mathbb R^{0\vert1}
      \xrightarrow{\mathrm{ev}} \hat X_0
      \xrightarrow{\hat x} \Lambda\mathfrak X,
    \end{equation*}
    where $\hat x$ is the versal family for $\Lambda\mathfrak X$.
  \end{enumerate}
  In this picture, the map $\mathcal L$ simply forgets $P$.  The
  subtle aspect about $\mathcal R$ is that the map $\psi\colon K \to
  \mathfrak X$ is not simply the composition
  \begin{equation*}
    K = P \to \Sigma \to \Lambda\mathfrak X \to \mathfrak X,
  \end{equation*}
  but, rather, is given by a descent construction using the canonical
  automorphism of the inertia $\Lambda\mathfrak X$.
\end{remark}

Eventually, we will need to understand the action of $\mathbb
R^{1\vert1} \subset \Isom(\mathbb R^{1\vert1})$ on $K_{\check x}$ by
rotations.  More precisely, the natural isomorphism between the stack
maps
\begin{equation*}
  \Pi T\hat X_0 \times \mathbb R^{1\vert1}
  \underset\mu{\overset\pr\rightrightarrows} \Pi T\hat X_0
  \xrightarrow{\check x} \mathfrak B^{\mathbb T}(\Lambda\mathfrak X)
\end{equation*}
leads to an isomorphism
\begin{equation*}
  \mu^*K_{\check x} \cong \pr^*K_{\check x}
  \text{ over }
  \Pi T\hat X_0 \times \mathbb R^{1\vert1}.
\end{equation*}
This will be described later (cf.\ figure~\ref{fig:step-by-step}), but
we would like to notice two useful facts now.  First, $\mathcal R$ has
image in the substack
\begin{equation*}
  \mathfrak K_1(\mathfrak X) \cong \underline\Fun_{\mathrm{SM}}(\mathbb
  T^{1\vert1}, \mathfrak X)\sslash \Isom(\mathbb T^{1\vert1})
\end{equation*}
of length $1$ supercircles, so that the $\mathbb R^{1\vert1}$-action
comes from the action on $\mathbb T^{1\vert1}$ by rotations.  Second,
an expression of $K_{\check x}$ as a composition of more basic
bordisms (left and right elbows and braiding) is obtained as follows.

Let $R_{\check x}^{[0,r]}$ be the $S = (\Pi T\hat X_0 \times \mathbb
R^{1\vert1}_{>0})$-family of bordisms such that
\begin{enumerate}
\item its image in $1\vert1\EBord_1$ is simply the pullback of the
  $\mathbb R^{1\vert1}_{>0}$-family $R_r$ described in
  section~\ref{sec:generators-relations};
\item the $X$-torsor over $\Sigma = \Pi T\hat X_0 \times \mathbb
  R^{1\vert1}_{>0} \times \mathbb R^{1\vert1}$ is given by the trivial covering
  \begin{equation*}
    U = \Pi T\hat X_0 \times \mathbb
  R^{1\vert1}_{>0} \times \mathbb R^{1\vert1} \to \Sigma,
  \end{equation*}
  the map
  \begin{equation*}
    \psi_0\colon
    U = \Pi T\hat X_0 \times \mathbb R^{1\vert1}_{>0} \times \mathbb R^{1\vert1}
    \xrightarrow{\pr} \Pi T\hat X_0 \times \mathbb R^{0\vert1}
    \xrightarrow{\mathrm{ev}} \hat X_0
    \overset p \twoheadrightarrow X_0,
  \end{equation*}    
  and the obvious $\psi_1\colon U \times_\Sigma U = U \to X_1$;
\item the skeleton is given by the family $[0,r] \subset \Sigma$ of
  superintervals, where $r$ denotes the standard coordinate function of
  the $\mathbb R^{1\vert1}_{>0}$ factor of $S$.
\end{enumerate}
For future reference, note that there are obvious variants
$R^{[r,1]}_{\check x}$, $R^{[r,1+r]}_{\check x}$ corresponding to
different choices of skeletons.  Also, we write $R^{[0,1]}_{\check x}$
for the restriction of $R^{[0,r]}_{\check x}$ to $\Pi T\hat X_0 \times
\{ 1 \}$.  This description also fixes $\Pi T\hat X_0$-families
$\mathrm{sp}_{\check x}, \overline{\mathrm{sp}}_{\check x} \in
1\vert1\EBord(\mathfrak X)_0$ such that $R_{\check x}^{[0,1]}$ is a
bordism $\emptyset \to \mathrm{sp}_{\check x} \amalg
\overline{\mathrm{sp}}_{\check x}$, and therefore also a left elbow
$L_{\check x}\colon \overline{\mathrm{sp}}_{\check x} \amalg
\mathrm{sp}_{\check x} \to \emptyset$.  With this notation, we can
finally write
\begin{equation*}
  K_{\check x}
  \cong L_{\check x}
  \circ \sigma_{\mathrm{sp}_{\check x}, \overline{\mathrm{sp}}_{\check x}}
  \circ R^{[0,1]}_{\check x}.
\end{equation*}

\subsection{The underlying line bundle}
\label{sec:underly-line-bundle}

Our goal for the remainder of this proof is to understand the
restriction of the line bundle $Q$ from
section~\ref{sec:restriction-to-K(X)}, which we call
\begin{equation*}
  Q'\colon \Pi T\Lambda\mathfrak X\sslash \Isom(\mathbb R^{1\vert1})
  = \mathfrak B^{\mathbb T}(\Lambda\mathfrak X)
  \xrightarrow{\mathcal R} \mathfrak K(\mathfrak X)
  \xrightarrow{Q} \mathrm{Vect}.
\end{equation*}
Thus, $Q'$ is identified with line bundle with superconnection on
$\Lambda\mathfrak X$ (cf.\ remark~\ref{rem:nonflat}).

Let us not worry about the superconnection for now and simply describe
the underlying line bundle.  Thus our goal is to describe the line
bundle on $\Pi T\hat X_0$ (which we still call $Q'$) induced by the
atlas $\check x$ and the isomorphism $H$ between its two pullbacks via
the source and target maps $\Pi T\hat X_1\rightrightarrows \Pi T\hat
X_0$.

As a warm-up, pick a point $(x \in X_0, g\colon x \to x) \in \Pi T\hat
X_0$.  It determines a point of $\mathfrak K(\mathfrak X)$ consisting
of the length $1$ constant superpath $x$ in $\mathfrak X$ with its
endpoints glued together via $g$.  Thus $Q'_{(x,g)} = L_g$.  From this
we already conclude that $Q' \to \Pi T\hat X_0$ is trivial, since we
assume the same of $L$.  Now fix a second point $(x',g') \in \Pi T\hat
X_0$ and a compatible morphism $f\colon x \to x'$; meaning that $g' =
fgf^{-1}$.  This gives rise to a morphism $\bar f$ in $\mathfrak
K(\mathfrak X)$ which, in the same vein as above, we can describe as
being the constant $f$.  From \eqref{eq:3}, we conclude that the
identification $H(f)\colon Q'_{(x,g)} \to Q'_{(x',g')}$ is through
\begin{equation*}
  H(f) = L(\bar f)
  = \frac{h(f,g)}{h(g',f)}
  = \frac{h(f,g)}{h(fgf^{-1},f)},
\end{equation*}
which agrees with \eqref{eq:14}.

Since we worked above with \emph{points}, we have only shown that the
restriction of $Q'$ to $\Lambda\mathfrak X \hookrightarrow \Pi
T\Lambda\mathfrak X$ agrees with the $L'$ from
section~\ref{sec:tu-xu-de-rham}.  We want a slightly stronger
statement, namely we want to identify $Q'$ with the a pullback of $L'$
via $\Pi T\Lambda\mathfrak X \to \Lambda\mathfrak X$.  To do this, we
need to fully describe the isomorphism $H$, which we see as a function
$H\colon \Pi T\hat X_1 \to \mathbb C^\times$.  Consider again the
versal family $\check x\colon \Pi T\hat X_0 \to \mathfrak B^{\mathbb
  T}(\mathfrak X)$.  Then $H$ is determined by the composition
\begin{equation*}
  \xymatrix@C=1.5em@R=1ex{
    & \Pi T\hat X_0 \ar@{=>}[dd] \ar[dr]^{\check x}\\
    \Pi T\hat X_1 \ar[ur]^s \ar[dr]_t
    && \mathfrak B^{\mathbb T}(\Lambda \mathfrak X) \ar[r]^{\mathcal R}&
    \mathfrak K(\mathfrak X)
    \ar[r]^U & \mathrm{Vect}.\\
    & \Pi T\hat X_0  \ar[ur]_{\check x}}
\end{equation*}
From \eqref{eq:3} and the above description of $K_{\check x} =
\mathcal R \circ \check x$, we see that
\begin{equation*}
  H = \exp\left( \int_{[0,1]} \vol_D\,\langle D, \beta^*A
    \rangle\right)
  \frac{h(f,g)}{h(fgf^{-1},f)},
\end{equation*}
where $\beta$ is the composition
\begin{equation*}
  U = \Pi T\hat X_1 \times \mathbb R^{1\vert1}
  \to \Pi T\hat X_1 \times \mathbb R^{0\vert1}
  \xrightarrow{\mathrm{ev}} \hat X_1
  \xrightarrow{p} X_1,
\end{equation*}
the integral is fiberwise over $\Pi T\hat X_1$, and $f$, $g$ are
similar to the paragraph above (except now they stand for $\hat
X_1$-points of $X_1$ instead of $\mathrm{pt}$-points).  From
lemma~\ref{lem:2}, we have
\begin{equation*}
  \int_{[0,1]} \vol_D\,\langle D, \mathrm{ev}^*A \rangle
  = \int_{[0,1]} \vol_D\, \tilde A + \theta \widetilde{dA} =
  \widetilde{dA} = t^*\tilde B - s^*\tilde B,
\end{equation*}
so that
\begin{equation*}
  H = \frac{t^*\exp(\tilde B)}{s^*\exp(\tilde B)}
  \frac{h(f,g)}{h(fgf^{-1},f)}.
\end{equation*}
Seeing $\exp(-\tilde B)\colon \Pi T\hat X_0 \to \mathbb C^\times$ as
an isomorphism
\begin{equation}
  \label{eq:25}
  \exp(-\tilde B)\colon Q' \to \pi^*L',
\end{equation}
of trivial line bundles, $H$ gets identified with the defining datum
\eqref{eq:14} of $L'$, as desired.

\subsection{The superconnection}
\label{sec:superconnection}

A superconnection on the line bundle $L'\colon \Lambda\mathfrak X \to
\mathrm{Vect}$ consists of a superconnection $\mathbb A$ on the
underlying line bundle $L' \to \hat X_0$ whose two pullbacks over
$\hat X_1$ are identified with one another through the isomorphism
$s^*L' \to t^*L'$.  Since we just want to describe the superconnection
on $\hat X_0$ (which we know a priori to be invariant), it suffices to
look at the versal family
\begin{equation*}
  \check x\colon \Pi T\hat X_0
  \to \mathfrak B^{\mathbb T}(\Lambda\mathfrak X) 
  = \Pi T\Lambda\mathfrak X\sslash\mathrm{Isom}(\mathbb R^{1\vert1})
\end{equation*}
and its image in $\mathfrak K(\mathfrak X)$; nothing here will involve
$\hat X_1$.  Now, the superconnection we are seeking to describe is
geometrically encoded by the $\mathbb R^{1\vert1}$-action on $\Pi
T\hat X_0$ and the line bundle $Q'$ over it.  More specifically, the
operator $\mathbb A$ is the infinitesimal generator associated to the
vector field $D = \partial_\theta - \theta \partial_t$.

Thus, we need to understand the action of $\mathbb R^{1\vert1}$ by
rotations of the $\Pi T\hat X_0$-family $K_{\check x}$, i.e., the
isomorphism
\begin{equation}
  \label{eq:8}
  \mu^*K_{\check x} \to \pr^*K_{\check x}
  \text{ over }
  \Pi T\hat X_0 \times \mathbb R^{1\vert1},
\end{equation}
and its image under $T_{\tilde{\mathfrak X}}$, where $\mu, \pr\colon
\Pi T\hat X_0 \times \mathbb R^{1\vert1} \to \Pi T\hat X_0$ are the
action respectively projection maps.

The isomorphism \eqref{eq:8} can be expressed as a composition of
simpler steps as indicated in figure~\ref{fig:step-by-step}.  There,
the left elbows
\begin{tikzpicture}[y=.7em,baseline=0pt]
  \draw[l,L,looseness=2] (0,1) to (0,0);
  \draw (0,0) pic {dotm} (0,1) pic {dotp};
\end{tikzpicture}
always represent the bordism $L_{\check x}$, or, more precisely,
its pullback via $\pr\colon \Pi T\hat X_0 \times \mathbb R^{1\vert1}
\to \Pi T\hat X_0$; the right elbows
\begin{tikzpicture}[y=.7em,baseline=0pt]
  \draw[r,R,looseness=2] (0,1) to["{$\scriptstyle[a,b]$}"{right, inner sep=.1em}] (0,0);
  \draw (0,0) pic {dotp} (0,1) pic {dotm};
\end{tikzpicture}
represent bordisms $R_{\check x}^{[a,b]}$ with the indicated skeleton
$[a,b]$; straight and crossing lines denote appropriate identities and
braidings or, more precisely, their avatars as thin bordisms.  Thus,
for instance, the second picture represents the composition
\begin{equation*}
  L_{\check x}
  \circ \sigma_{\mathrm{sp}_{\check x} \amalg \overline{\mathrm{sp}}_{\check x}}
  \circ (\mathrm{Id}_{\overline{\mathrm{sp}}_{\check x}}
  \amalg L_{\check x}
  \amalg \mathrm{Id}_{\mathrm{sp}_{\check x}})
  \circ (R^{[0,r]}_{\check x} \amalg R^{[r,1]}_{\check x})
\end{equation*}
(leaving implicit, as usual, pullbacks via projection maps).  The
isometries between successive pictures are the obvious ones.  For
example, the second isomorphism is the semigroup property of right
elbows and the fourth uses the symmetry condition of the braiding.
Now, the image of each of the intermediate steps under
$T_{\tilde{\mathfrak X}}$ is a canonically trivial line bundle (over
$\Pi T\hat X_0 \times \mathbb R^{1\vert1}$), and, with the exception
of the fifth step, the corresponding isomorphism of line bundles is
the identity.  In fact, under the canonical trivilizations of all line
bundles in question, we have an identity
\begin{equation*}
  T_{\tilde{\mathfrak X}}(\mu^*K_{\check x} \to \pr^*K_{\check x}) =
  T_{\tilde{\mathfrak X}}(R_{\check x}^{[1,1+r]} \to R_{\check x}^{[0,r]}).
\end{equation*}
The right-hand side, once identified with a $\mathbb C$-valued
function on $\Pi T\hat X_0$, can be calculated from \eqref{eq:3} and
is given by
\begin{equation*}
  \exp\left(-\int_{[0,r]} \vol_D\,\langle D, \mathrm{ev}^*A \rangle \right)
  = \exp\int_{[0,r]} \vol_D\, - \tilde A =
  \exp(\theta \tilde A) = 1 + \theta \tilde A.
\end{equation*}
Here, the second equality uses corollary~\ref{cor:1} and the fact that
$\widetilde{dA}$ vanishes, the integral being fibered over $\Pi T\hat
X_0$.

\begin{figure}
  \def\Rsup#1{\scriptstyle #1}
  \def\extraspace{2em}
  \centering
  \begin{tabular}{rcl}
  $\pr^*K_{\check x}$ & $\cong$ & 
  \begin{tikzpicture}[baseline={(0,0.5)}]
    \draw[l,L] (0,1) to (0,0);
    \draw[S] ([r] 0,0) to ([l] 1,1);
    \draw[S, preaction={draw,ultra thick,white}] ([r] 0,1) to ([l] 1,0);
    \draw[r,R] (1,0)
      to[looseness=3, "$\Rsup{[0,1]}$"right] (1,1);
    \draw (0,0) pic {dotm}
          (0,1) pic {dotp}
          (1,0) pic {dotp}
          (1,1) pic {dotm};
   \end{tikzpicture}
   $\quad\cong\quad$     
  \begin{tikzpicture}[baseline={(0,0.5)}]
    \draw[l, L] (0,1) to (0,0);
    \draw[S] ([r]0,0) to ([l]1,1);
    \draw[S, preaction={draw,ultra thick,white}] ([r]0,1) to ([l]1,0);
    \draw ([r]1,0) to ([l]2,0) ([r]1,1) to ([l]2,1);
    \draw[l,L] (2,.6) to (2,.4);
    \draw[r,R] (2,0) to[looseness=3, "$\Rsup{[0,r]}$"right] (2,.4);
    \draw[r,R] (2,.6) to[looseness=3, "$\Rsup{[r,1]}$"right] (2,1);
    \draw (0,0) pic {dotm}
          (0,1) pic {dotp}
          (1,0) pic {dotp}
          (1,1) pic {dotm}
          (2,0) pic {dotp}
          (2,.4) pic {dotm}
          (2,.6) pic {dotp}
          (2,1) pic {dotm};
   \end{tikzpicture}
  \\[\extraspace] & $\cong$ &
  \begin{tikzpicture}[baseline={(0,0.5)}]
    \draw[l, L] (0,1) to (0,0);
    \draw[S] ([r]0,0) to ([l]1,1);
    \draw[S, cross] ([r]0,1) to ([l]1,0);
    \draw ([r]1,0) to ([l]2,0) ([r]1,1) to ([l]2,1);
    \draw[l,L] (2,.6) to (2,.4);
    \draw[S] ([r]2,0) to ([l]3,.6) ([r]2,.4) to ([l]3,1);
    \draw[S,cross] ([r]2,1) to ([l]3,.4) ([r]2,.6) to ([l]3,0);
    \draw[r,R] (3,0) to[looseness=3, "$\Rsup{[r,1]}$"right] (3,.4);
    \draw[r,R] (3,.6) to[looseness=3, "$\Rsup{[0,r]}$"right] (3,1);
    \draw (0,0) pic {dotm}
          (0,1) pic {dotp}
          (1,0) pic {dotp}
          (1,1) pic {dotm}
          (2,0) pic {dotp}
          (2,.4) pic {dotm}
          (2,.6) pic {dotp}
          (2,1) pic {dotm}
          (3,0) pic {dotp}
          (3,.4) pic {dotm}
          (3,.6) pic {dotp}
          (3,1) pic {dotm};
   \end{tikzpicture}
   $\quad\cong\quad$     
  \begin{tikzpicture}[baseline={(0,0.5)}]
    \draw[l, L] (0,1) to (0,0);
    \draw[S] ([r]0,0) to ([l]1,1);
    \draw[S, preaction={draw,ultra thick,white}] ([r]0,1) to ([l]1,0);
    \draw ([r]1,0) to ([l]2,0) ([r]1,1) to ([l]2,1);
    \draw[l,L] (2,.6) to (2,.4);
    \draw[r,R] (2,0) to[looseness=3, "$\Rsup{[r,1]}$"right] (2,.4);
    \draw[r,R] (2,.6) to[looseness=3, "$\Rsup{[0,r]}$"right] (2,1);
    \draw (0,0) pic {dotm}
          (0,1) pic {dotp}
          (1,0) pic {dotp}
          (1,1) pic {dotm}
          (2,0) pic {dotp}
          (2,.4) pic {dotm}
          (2,.6) pic {dotp}
          (2,1) pic {dotm};
  \end{tikzpicture}
  \\[\extraspace] & $\cong$ &
  \begin{tikzpicture}[baseline={(0,0.5)}]
    \draw[l, L] (0,1) to (0,0);
    \draw[S] ([r]0,0) to ([l]1,1);
    \draw[S, preaction={draw,ultra thick,white}] ([r]0,1) to ([l]1,0);
    \draw ([r]1,0) to ([l]2,0) ([r]1,1) to ([l]2,1);
    \draw[l,L] (2,.6) to (2,.4);
    \draw[r,R] (2,0) to[looseness=3, "$\Rsup{[r,1]}$"right] (2,.4);
    \draw[r,R] (2,.6) to[looseness=3, "$\Rsup{[1,1+r]}$"right] (2,1);
    \draw (0,0) pic {dotm}
          (0,1) pic {dotp}
          (1,0) pic {dotp}
          (1,1) pic {dotm}
          (2,0) pic {dotp}
          (2,.4) pic {dotm}
          (2,.6) pic {dotp}
          (2,1) pic {dotm};
    \end{tikzpicture}
    $\quad\cong\quad$
    \begin{tikzpicture}[baseline={(0,0.5)}]
    \draw[l,L] (0,1) to (0,0);
    \draw[S] ([r] 0,0) to ([l] 1,1);
    \draw[S, preaction={draw,ultra thick,white}] ([r] 0,1) to ([l] 1,0);
    \draw[r,R] (1,0)
      to[looseness=3, "$\Rsup{[r,1+r]}$"right] (1,1);
    \draw (0,0) pic {dotm}
          (0,1) pic {dotp}
          (1,0) pic {dotp}
          (1,1) pic {dotm};
   \end{tikzpicture}
   \\[\extraspace] & $\cong$ & $\mu^*K_{\check x}$
   \end{tabular}
   \caption{The map \eqref{eq:8}, step by step.}    
  \label{fig:step-by-step}
\end{figure}

Thus, the $\mathbb R^{1\vert1}$-action on $Q' = \Pi T\hat X_0
\times\mathbb C$, expressed as an algebra homomorphism
\begin{equation*}
  C^\infty(\Pi T\hat X_0 \times\mathbb C)
  \to C^{\infty}(\Pi T\hat X_0 \times\mathbb C \times \mathbb
  R^{1\vert1}),
\end{equation*}
is characterized by
\begin{equation*}
  \tilde\omega \in C^\infty(\Pi T\hat X_0) \cong \Omega^*(\hat X_0) \mapsto \tilde\omega + \theta \widetilde{d\omega},\quad
  z \in C^\infty(\mathbb C) \mapsto (1 + \theta \tilde A)z,
\end{equation*}
and the infinitesimal action sends
\begin{equation*}
  \tilde \omega z
  \mapsto (\partial_\theta - \theta\partial_t)
          ((\tilde\omega + \theta \widetilde{d\omega})
          (1 + \theta \tilde A)z)
  = (\widetilde{d\omega} + \tilde A\tilde \omega)z 
  = (D_d + \tilde A) \tilde\omega z.
\end{equation*}
The superconnection corresponding to this odd, fiberwise linear vector
field on the total space of $Q'$ is given by the formula $d + A$ and,
applying the gauge transformation \eqref{eq:25}, we find that the
superconnection on $L'$ is given by the formula
\begin{equation*}
  \mathbb A = d + A + dB = d + A + \Omega,
\end{equation*}
which agrees with \eqref{eq:20}.

\section{$1|1$-EFTs and the Chern character of twisted vector bundles}
\label{sec:ch-twisted}

Fix an orbifold $\mathfrak X$, a gerbe with connection
$\tilde{\mathfrak X}$ and an $\tilde{\mathfrak X}$-twisted vector
bundle with connection $\mathfrak V$, with the usual notation of
appendix~\ref{sec:twisted-superconn}.  In this section, we associate
to $\mathfrak V$ a $1\vert1$-dimensional $T = T_{\tilde{\mathfrak
    X}}$-twisted field theory $E =E_{\mathfrak V}$
\begin{equation*}
  \xymatrix@C=4em{
    1\vert1\EBord(\mathfrak X)^{\mathrm{glob}}
    \ar@/^{4ex}/[r]^1_{}="x"
    \ar@/_{4ex}/[r]_T^{}="y"
    \ar@{=>}"x";"y"^{E}
    & \mathrm{Alg}
  }
\end{equation*}
and show that its dimensional reduction provides a geometric
interpretation of the twisted orbifold Chern character.

If $\tilde{\mathfrak X}$ is trivial, and thus $V$ is just a usual
vector bundle over $\mathfrak X$, the basic idea behind the
construction of the field theory $E = E_V \in 1\vert1\EFT(\mathfrak
X)$ is the following.  To a posivitely oriented superpoint of
$\mathfrak X$, specified by a map to the atlas $X \to \mathfrak X$,
\begin{equation*}
  \mathbb R^{0\vert1} \xrightarrow{x} X \to \mathfrak X,
\end{equation*}
we assign the vector space $V_{x(0)}$.  To a superinterval $\Sigma$ as
in \eqref{eq:7}, we assign the super parallel transport along
$\psi^*V$.  Orientation-reversed manifolds map to the dual vector
spaces and maps, and the image of elbows is determined by the duality
pairing.  We need to check that this is consistent with isometries
between bordisms, in particular those of the form \eqref{eq:7}.  But,
indeed, the data of the vector bundle $V$ on $\mathfrak X$ induces a
superconnection-preserving bundle map $\psi'^*V \to (\psi\circ F)^*V$;
this ensures that $E_V$ is an internal functor.

Now, if $V$ is twisted by a nontrivial gerbe, then $\psi'^*V$ and
$(\psi \circ F)^*V$ only become isomorphic after tensoring with an
appropriate line bundle, namely $\xi^*L$.  As we will see, this
deviation from functoriality is expressed by the fact that $E_V$ is a
twisted field theory, and the relevant twist is the
$T_{\tilde{\mathfrak X}}$ from section~\ref{sec:twists}.

\begin{remark}
  The natural transformation $E$ is not invertible, so we must choose
  between the lax or oplax variants.  We make the choice that better
  fits with our conventions for twisted vector bundles.  Now,
  restricting to the moduli stack of closed, connected bordisms
  $\mathfrak K(\mathfrak X)$, the twist $T$ gives us a line bundle,
  and we made the convention that $E$ maps $1\to T$, and not the
  opposite, so that its partition function (i.e., the restriction
  $E\vert_{\mathfrak K(\mathfrak X)}$) determines a section of
  $T\vert_{\mathfrak K(\mathfrak X)}$, and not of its dual.
\end{remark}

\subsection{Construction of the twisted field theory}
\label{sec:constr-1|1-efts}

Unraveling the definition of natural transformation between internal
functors (see \cite[section~5.1]{MR2742432} for a detailed
explanation), we see that, at the level of object stacks, $E$
determines a symmetric monoidal fibered functor
\begin{equation*}
  E\colon 1\vert1\EBord(\mathfrak X)^{\mathrm{glob}}_0 \to \mathrm{Alg}_1.
\end{equation*}
More specifically, $E$ assigns to an $S$-family $Y$ in the domain a
left $T(Y)$-module $E(Y)$; thus, by construction of $T$, $E(Y)$ is
nothing but a vector bundle over $S$.  When
\begin{equation*}
  Y = (Y \to S,\psi, \iota\colon S\times\mathbb R^{0\vert1} \to U)
\end{equation*}
is a positively oriented superpoint, we set
\begin{equation*}
  E(Y) = V_\iota,
\end{equation*}
where, for any $f\colon S\times\mathbb R^{0\vert1} \to U$, we write
\begin{equation*}
  V_f =  (\psi_0 \circ f)^*V_{\vert S \times 0}.
\end{equation*}
If $Y$ is negatively oriented, then $E(Y)$ is the dual of the above.
To a morphism $\lambda\colon Y \to Y'$ over $f\colon S \to S'$, we
assign the obvious identification
\begin{equation*}
  E(\lambda)\colon V_\iota \to f^*V_{\iota'},
\end{equation*}
which makes sense in view of condition \eqref{eq:17}.  All the
remaining data is determined, uniquely up to unique isomorphism, by
the symmetric monoidal requirement for $E$.

At the level of morphism stacks, $E$ assigns, to each $S$-family of
bordisms $\Sigma$ from $Y_0$ to $Y_1$, a map of $T(Y_1)$-modules;
using the fact that the only algebra in sight is $\mathbb C$, we find
that $E(\Sigma)$ is a linear map
\begin{equation*}
  E(\Sigma)\colon E(Y_1) \to T(\Sigma) \otimes E(Y_0) 
\end{equation*}
of vector bundles over $S$.  Now there are several kinds of bordisms
to consider.  For the sake of clarity, we focus on the superpath stack
$\mathfrak P(\mathfrak X) \hookrightarrow 1\vert1\EBord(\mathfrak
X)_1^{\mathrm{glob}}$.

Fix $\Sigma \in \mathfrak P(\mathfrak X)$, and recall our usual
notation fixed in sections~\ref{sec:skeletons} and
\ref{sec:gerbes-1vert1-twists}.  Instead of describing $E(\Sigma)$, it
is more convenient to describe its inverse
\begin{equation*}
  E^{-1}(\Sigma)\colon \left( \bigotimes\nolimits_{1\leq i\leq n} L_{j_i} \right)
  \otimes V_{a_0} \to V_{b_n},
\end{equation*}
which we set to be the composition
\begin{equation*}
  E^{-1}(\Sigma) = (\id_n \otimes E_n) \circ \dots \circ (\id_1 \otimes E_1) \circ (\id_0 \otimes E_0)
\end{equation*}
where $\id_k$ denotes the identity map of $\bigotimes\nolimits_{k <
  i\leq n} L_{j_i}$, $E_0 = \mathrm{SP}_0$, and $E_i$ is the
composition
\begin{equation}
  \label{eq:4}
  E_i\colon L_{j_i} \otimes V_{b_{i-1}} 
  \xrightarrow{m_{j_i, b_{i-1}}} V_{a_i} \xrightarrow{\mathrm{SP}_i}
  V_{b_i}
\end{equation}
for $1\leq i\leq n$.  Here, $\mathrm{SP}_i$ denotes the super parallel
transport of $\psi_0^*V$ along $I_i$.

Finally, there are certain conditions on the above data that need to
be verified.  It is clear that $Y \mapsto E(Y)$ is a fibered functor,
so it remains to verify that $\Sigma \mapsto E(\Sigma)$ determines a
natural transformation between appropriate fibered functors, and
moreover that this is compatible with compositions of bordisms and
identity bordisms, that is, commutativity of diagrams (3.5) and (3.6)
in \cite{MR2264737}.

\begin{proposition}
  The assignment $\Sigma \mapsto E(\Sigma)$ respects compositions of
  intervals.  That is, given bordisms
  \begin{equation*}
    Y_0 \overset{\Sigma_1}\longrightarrow Y_1 \overset{\Sigma_2}\longrightarrow Y_2 
  \end{equation*}
  in $\mathfrak P(\mathfrak X)$ and writing $\Sigma = \Sigma_2
  \amalg_{Y_1}\Sigma_1$, the diagrams
  \begin{equation*}
    \xymatrix{
      T(\Sigma_2) \otimes T(\Sigma_1) \otimes E(Y_0)
      \ar[d]^{\mu_{\Sigma_2,\Sigma_1}} \ar[r]^-{E^{-1}(\Sigma_1)}
        & T(\Sigma_2) \otimes E(Y_1)\ar[d]^{E^{-1}(\Sigma_2)}\\
      T(\Sigma) \otimes E(Y'_0) \ar[r]^-{E^{-1}(\Sigma)}
        & E(Y_2)
      }
      \quad
      \xymatrix{T(\Id_{Y_0}) \otimes E(Y_0) \ar[r]^-{E^{-1}(\Id_{Y_0})}  \ar[d]^{\epsilon_{Y_0}}&
      E(Y_0)\\
      \Id_{T(Y_0)} \otimes E(Y_0) \ar[ur]}
  \end{equation*}
  commute.
\end{proposition}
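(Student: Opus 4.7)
The plan is to expand both paths in each diagram from the defining recipe of $E^{-1}$ as an ordered composite of super parallel transports $\mathrm{SP}_i$ and gerbe multiplication maps $m_{j_i, b_{i-1}}$, and to match the resulting expressions under the compositor $\mu_{\Sigma_2, \Sigma_1}$. To that end I would first make explicit a convenient choice of skeleton on the composite bordism: concatenating the skeletons of $\Sigma_1$ and $\Sigma_2$ yields intervals $I_0^{(1)}, \ldots, I_{n_1}^{(1)}, I_0^{(2)}, \ldots, I_{n_2}^{(2)}$ on $\Sigma$ meeting at the core of $Y_1$. At the interface the endpoints $b_{n_1}^{(1)}$ and $a_0^{(2)}$ agree as maps into $Y_1 \subset \Sigma$, so the interpolating morphism $j_{n_1+1}$ is the identity in the groupoid $U \times_\Sigma U \rightrightarrows U$; in particular, $L_{j_{n_1+1}}$ is canonically $\mathbb C$. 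With this skeleton, the compositor $\mu_{\Sigma_2, \Sigma_1}$ is nothing but the canonical tensor-product identification $T(\Sigma_2) \otimes T(\Sigma_1) \cong T(\Sigma)$ that absorbs this trivial factor.

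Next, I would expand $E^{-1}(\Sigma)$ using \eqref{eq:4} as the $(n_1 + n_2 + 2)$-fold composite of maps $E_i$. The first $n_1 + 1$ of them coincide, by construction, with those built from $\Sigma_1$'s skeleton and therefore assemble into $\id_{T(\Sigma_2)} \otimes E^{-1}(\Sigma_1)$; the remaining $n_2 + 1$ coincide with those built from $\Sigma_2$'s skeleton, where the first one, $E_{n_1+1}$, uses the trivial factor $L_{j_{n_1+1}} = \mathbb C$ via the tautological $\mathbb C \otimes V \cong V$ and therefore reduces to $E_0^{(2)} = \mathrm{SP}_0^{(2)}$; together they assemble into $E^{-1}(\Sigma_2)$. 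Keeping track of tensor factors, one then reads off that $E^{-1}(\Sigma) \circ \mu_{\Sigma_2, \Sigma_1} = E^{-1}(\Sigma_2) \circ (\id_{T(\Sigma_2)} \otimes E^{-1}(\Sigma_1))$, which is the content of the first square. The unit diagram is the analogous special case: an identity bordism carries a single zero-length interval for which $T(\Id_{Y_0}) = \mathbb C$ with $\epsilon_{Y_0}$ the tautological trivialization, and $E^{-1}(\Id_{Y_0})$ reduces to the identity on $E(Y_0)$.

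The main technical obstacle I anticipate is not the commutativity itself but the prerequisite that $E^{-1}(\Sigma)$ be independent of the choice of skeleton, which is needed in order to use the concatenated skeleton above to formulate the identity. This independence should be proved exactly in the spirit of the construction of $T$ in section~\ref{sec:gerbes-1vert1-twists}: compatibility of $\mathrm{SP}$ with subdivision together with associativity of the gerbe multiplication handles refinements of a common skeleton, and pairs of skeletons without a common refinement are compared through the auxiliary ``displaced'' triangulations employed there.
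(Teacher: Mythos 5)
Your proposal is correct and is essentially the argument the paper has in mind: the paper's own proof is literally the one-line ``This is clear from the definitions,'' and your expansion --- concatenating skeletons, observing that the interface morphism $j_{n_1+1}$ is an identity so $L_{j_{n_1+1}}$ is canonically trivial and absorbed by the compositor, and reducing the unit diagram to the degenerate length-zero case --- is the intended unpacking. Your closing remark about skeleton-independence is a reasonable caution, though it is not strictly needed here since the proposition is formulated with the canonical concatenated skeleton on $\Sigma_2\amalg_{Y_1}\Sigma_1$; comparison of different skeletons is handled separately by the naturality proposition that follows.
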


\begin{proof}
  This is clear from the definitions.
\end{proof}

\begin{proposition}
  The assignment $\Sigma \mapsto E(\Sigma)$ is natural in $\Sigma \in
  \mathfrak P(\mathfrak X)$, that is, for each morphism $\lambda\colon
  \Sigma \to \Sigma'$ lying over $f\colon S \to S'$, the diagram
  \begin{equation}
    \label{eq:6}
    \begin{gathered}
      \xymatrix{ T(\Sigma) \otimes E(Y_0) \ar[d]^{T(\lambda) \otimes
          E(\lambda_0)}\ar[r]^-{E^{-1}(\Sigma)} & E(Y_1)\ar[d]^{E(\lambda_1)}\\
        T(\Sigma') \otimes E(Y'_0) \ar[r]^-{E^{-1}(\Sigma')} & E(Y'_1) }
    \end{gathered}
  \end{equation}
  commutes.
\end{proposition}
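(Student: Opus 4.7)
The plan is to mirror the structure of the proof that $T$ is a functor in section~\ref{sec:gerbes-1vert1-twists}. Every morphism $\lambda$ in $\mathfrak P(\mathfrak X)$ factors as a cartesian arrow induced by the base change $f\colon S \to S'$ followed by a morphism covering $\id_S$, so I would first handle the cartesian case, which amounts to a routine verification that all ingredients of $E^{-1}$ pull back naturally, and then focus on morphisms over $\id_S$. Among these, I would further distinguish two types: refinements $R_I^{\bar I}\colon (\psi, \bar I) \to (\psi, I)$ of the skeleton (with identity torsor map), and morphisms $\bar\lambda\colon (\psi, \bar I) \to (\psi', \bar I')$ between bordisms whose skeletons are compatible in the sense of section~\ref{sec:skeletons}.

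For refinements, the diagram commutes almost tautologically: on the $T$-side, refinement only inserts canonically trivial tensor factors $L_{j_i} \cong \mathbb C$; on the $E$-side, the semigroup property of super parallel transport identifies $\mathrm{SP}^V$ along a subdivided interval with the composition of $\mathrm{SP}^V$'s along its subintervals, and the inserted gerbe multiplication maps $m_{j_i, b_{i-1}}$ at the new subdivision points become canonically the identity.

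The core of the argument is the compatible-skeleton case. For each $1\leq i\leq n$, the connecting superintervals $J_{i-1}, J_i \subset U \times_\Sigma U$ together with the parallel superintervals $I_i \subset U$ and $\lambda^* I_i' \subset U'$ assemble into a small rectangle, and I would verify that the corresponding square, involving super parallel transport $\mathrm{SP}^V$ along $I_i$ and $\lambda^*I_i'$, super parallel transport $\mathrm{SP}^L$ along $J_i$, and the multiplications $h_i$ and $m_{j_i, b_{i-1}}$, commutes. This is the $V$-analogue of the chain used in \eqref{eq:2} to define $T(\lambda)$, and it boils down to the defining compatibility of the superconnection on $\mathfrak V$ with the gerbe data of $\tilde{\mathfrak X}$ (appendix~\ref{sec:twisted-superconn}): parallel transport in $\psi_0^*V$ intertwines with parallel transport in $\psi_1^*L$ via the twisted action map $L_g \otimes V_x \to V_{gx}$. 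Concatenating these squares for $i=1,\dots,n$ and noting that $\tilde a_0$ and $\tilde b_n$ are identities (since we work in the globular subcategory) yields the commutativity of \eqref{eq:6} in this case.

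Finally, for a general $\lambda$ over $\id_S$ whose triangulations admit no common refinement, I would apply the same intermediate-skeleton trick used in section~\ref{sec:gerbes-1vert1-twists}: displace an offending subdivision $c_i$ to a nearby $d$ staying strictly away from both $c_i$ and $\lambda^*c_i'$, expressing $\lambda$ as a composition of two morphisms whose skeletons are both compatible with the auxiliary one, and invoking the previously established cases. The main obstacle lies in the compatible-skeleton step: the definitions of $E^{-1}(\Sigma)$ and $T(\lambda)$ involve many interleaved factors, and one must carefully unwind the twisted superconnection axioms to confirm that the local rectangle identities assemble correctly. Once this local verification is in hand, the remainder is the same diagrammatic bookkeeping as in the proof that $T$ is a functor.
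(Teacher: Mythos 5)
Your proposal is correct and follows essentially the same route as the paper: reduce to the refinement case (handled by the gluing property of super parallel transport) and the compatible-skeleton case, where the commutativity of \eqref{eq:6} is assembled from local squares that commute by the compatibility \eqref{eq:16} between $h$ and $m$ and by the compatibility of the connections on $L$ and $V$. Your extra care with the cartesian factorization and the intermediate-skeleton trick for incompatible triangulations is consistent with (and slightly more explicit than) the paper, which disposes of the general case by induction and compatibility with compositions.
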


Here, $\lambda_i\colon Y_i \to Y'_i$ denotes the image of $\lambda$ in
$1\vert1\EBord(\mathfrak X)_0$ via the source and target functors for
$i=0,1$ respectively.

\begin{proof}
  If $\lambda$ is a refinement of skeletons, the claim follows
  immediately from the gluing property of super parallel transport.
  Next, let us consider the case where the skeletons of $\Sigma$,
  $\Sigma'$ are compatible and the base map $f\colon S \to S'$ is the
  identity.  For simplicity, we may assume that the skeletons comprise
  three intervals $I_0, I_1, I_2$, and that the first and last of them
  have length zero.  The general case can be deduced by induction,
  using also the compatibility with compositions.

  In this particular situation, \eqref{eq:6} corresponds to the outer
  square in the following diagram.
  \begin{equation*}
    \xymatrix{
      L_{j_2} \otimes L_{j_1} \otimes V_{b_0} \ar[r]^m
      \ar@{}[dr]|{\mathrm{(a)}}
        & L_{j_2} \otimes V_{a_1} \ar[r]^{\mathrm{SP}_V}
        \ar@{}[dr]|{\mathrm{(d)}}
          & L_{j_2} \otimes V_{b_1} \ar[r]^m
            & V_{a_2} \ar@{=}[ddd] \\
      L_{j_2} \otimes L_{\tilde a_1}^\vee \otimes L_{j'_1} \otimes
      V_{b_0} \ar[r]^m \ar[u]^h \ar[d]^{\mathrm{SP}_L}
      \ar@{}[dr]|{\mathrm{(b)}}
        & L_{j_2} \otimes L_{\tilde a_1}^\vee \otimes V_{a'_1}
        \ar[r]^{\mathrm{SP}_L \otimes \mathrm{SP}_V}
        \ar[d]^{\mathrm{SP}_L} \ar[u]^m
         \ar@{}[ddr]|{\mathrm{(e)}}
          & L_{j_2} \otimes L_{\tilde b_1}^\vee \otimes V_{b'_1}
          \ar[dd]^h \ar[u]^m  \ar@{}[ddr]|{\mathrm{(f)}}&\\
      L_{j_2} \otimes L_{\tilde b_1}^\vee \otimes L_{j'_1} \otimes
      V_{b_0} \ar[r]^m \ar[d]^h  \ar@{}[dr]|{\mathrm{(c)}}
        & L_{j_2} \otimes L_{\tilde b_1}^\vee \otimes V_{a'_1} \ar[d]^h &&\\
      L_{j'_2} \otimes L_{j'_1} \otimes V_{b_0} \ar[r]^m
        & L_{j'_2} \otimes V_{a'_1} \ar[r]^{\mathrm{SP}_V}
          & L_{j'_2} \otimes V_{b'_1} \ar[r]^m
            & V_{a_2}
    }
  \end{equation*}
  Here, each morphism is tensored with an appropriate identity, which
  we leave implicit.  We notice that each of the inner diagrams
  commutes: (a) and (f) due to the compatibility between $h$ and $m$
  (cf.\ \eqref{eq:16}), (b), (c) and (e) because the maps in question
  act independently on the various tensor factors, and (d) due to the
  compatibility between the connections of $L$ and $V$.  Thus the
  outer square commutes, as claimed.
\end{proof}

Finally, we briefly explain how to extend $E$ from $\mathfrak
P(\mathfrak X)$ to the whole of $1\vert1\EBord(\mathfrak X)$.  First,
we require the so-called spin-statistics relation, that is, that the
flip of $1\vert1\EBord(\mathfrak X)$ maps to the grading involution of
$\mathrm{Vect}$.  Second, recall that in Stolz and Teichner's
definition, the bordism category does not admit duals, since a length
zero right elbow $R_0$ is not allowed; this is done because they want
to allow field theories with infinite dimensional state spaces.  In
our example, we could introduce those additional morphisms $R_0$, and
then $E$ would be uniquely determined by the above prescriptions, the
requirement that duals map to duals, and the symmetric monoidal
property.  Concretely, suppose that $\Sigma$ is a family of length
zero left elbows, that is
\begin{equation*}
  \Sigma\colon \bar Y \amalg Y \to \emptyset,
\end{equation*}
where $Y$ denotes an $S$-family of positive superpoints with a choice
of skeleton, say $\iota\colon S \times\mathbb R^{0\vert1} \to U$ and
$\bar Y$ denotes its orientation reversal, with the same underlying
skeleton.  Then we define $E(\bar Y) = V_i^\vee$ and
\begin{equation*}
  E(\Sigma)\colon V^\vee_\iota \otimes V_\iota \to \mathbb C_S
\end{equation*}
to be the evaluation pairing.  The image of other kinds of bordisms is
determined similarly.

\begin{remark}
  A trivialization of $L \to X_1$ allows us to extend, in a fairly
  obvious way, the construction above to $1\vert1\EBord(\mathfrak
  X)^{\mathrm{skel}}$.
\end{remark}

\subsection{Dimensional reduction and the Chern character}
\label{sec:twisted-ch}

In this subsection, we finish the proof of theorem~\ref{thm:4}, by
showing that the diagram indeed commutes.  So our goal is to study the
dimensional reduction of the twisted field theory $E$ associated to
the $\tilde{\mathfrak X}$-twisted vector bundle $\mathfrak V$.  Let us
denote it by
\begin{equation*}
  E' \in 0\vert1\EFT^{T'_{\tilde{\mathfrak X}}}(\Lambda\mathfrak X),
\end{equation*}
and recall, from proposition~\ref{prop:1} and theorem~\ref{thm:3} that
$E'$ determines and is completely determined by an even, closed form
$\omega \in \Omega^*(\Lambda\mathfrak X; L')$.  The underlying form on
the atlas $\hat X_0$, which by abuse of notation we still denote
$\omega$, corresponds to the section
\begin{equation*}
  E'(\mathcal L \circ \check x) \in
  C^\infty(\Pi T\hat X_0; Q')
\end{equation*}
under the isomorphism \eqref{eq:25} and usual identification
$C^\infty(\Pi T\hat X_0) \cong \Omega^*(\hat X_0)$.

\begin{proposition}
  \label{prop:2}
  $\omega = \mathrm{ch}(\mathfrak V) \in \Omega^*(\Lambda\mathfrak X; L')$.
\end{proposition}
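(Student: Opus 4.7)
My plan is to reduce the statement to a computation on the versal family $\check x\colon \Pi T\hat X_0 \to \mathfrak B^{\mathbb T}(\Lambda\mathfrak X)$. Since $\omega$ is determined by its pullback to $\hat X_0$, it suffices to evaluate $E'(\mathcal L \circ \check x)$ in $C^\infty(\Pi T\hat X_0;\pi^* L') \cong \Omega^*(\hat X_0; L')$, where the identification uses the gauge transformation \eqref{eq:25}, and compare the result with (the pullback to $\hat X_0$ of) the twisted Chern character $\mathrm{ch}(\mathfrak V)$. By the very definition of dimensional reduction, $E'(\mathcal L \circ \check x) = E(\mathcal R\circ\check x) = E(K_{\check x})$, so the task becomes computing the partition function of $E$ on the $\Pi T\hat X_0$-family of length-$1$ supercircles $K_{\check x}$.

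To evaluate $E(K_{\check x})$, I would use the decomposition $K_{\check x}\cong L_{\check x}\circ\sigma\circ R^{[0,1]}_{\check x}$ from section~\ref{sec:remind-dimens-reduct} together with the construction of $E$ in section~\ref{sec:constr-1|1-efts}. Applying $E$ to each factor and composing, the partition function takes the form of a supertrace of a composition mixing the super parallel transport of $\psi_0^*\mathfrak V$ along the skeleton interval with the twisted identification supplied by the gerbe multiplication map $m$ in \eqref{eq:4}; this is the $1\vert1$-dimensional super analogue of the classical ``trace of the holonomy'' formula for the partition function of a quantum-mechanical sigma model.

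To match this supertrace with the Chern character, I would invoke Dumitrescu's formula for super parallel transport along the standard superinterval $[0,1]\subset\mathbb R^{1\vert1}$, expressing it as an exponential of the form $\exp(\int_{[0,1]}\vol_D\,\langle D,-\rangle)$, just as was done for the $A$-contribution in section~\ref{sec:restriction-to-K(X)}. Applying Corollary~\ref{cor:1} to convert the resulting integrals of pulled-back forms into forms on $\hat X_0$, the supertrace rearranges into the shape $\exp(\tilde B)\cdot\str\exp(-(\mathbb A+\Omega)^2)\vert_{\hat X_0}$. The $\exp(\tilde B)$ prefactor is then absorbed precisely by the gauge transformation $\exp(-\tilde B)$ of \eqref{eq:25}, leaving the standard Quillen-type representative of $\mathrm{ch}(\mathfrak V)$ in the complex \eqref{eq:20}. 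The main obstacle is the bookkeeping in this last step---tracking the interplay between the superconnection on $V$, the $1$-form $A$ integrated along the skeleton interval, the gerbe function $h$ arising from $m$, and the $B$-field contribution from \eqref{eq:25}, and verifying that signs and normalizations match the conventions adopted in section~\ref{sec:tu-xu-de-rham}---but no genuinely new geometric input should be required beyond the direct twisted generalization of the untwisted computations of \textcite{arXiv:0711.3862, arXiv:1202.2719}.
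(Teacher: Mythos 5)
Your strategy is in substance the same as the paper's: reduce to the versal family, identify $E'(\mathcal L\circ\check x)$ with $E(K_{\check x})$, compute the latter via \eqref{eq:4} as a supertrace of super parallel transport composed with the gerbe multiplication $m$, and absorb the resulting $Q'$-versus-$\pi^*L'$ discrepancy via the gauge transformation \eqref{eq:25}. Two corrections to your last step, however. First, the key analytic input is not the abelian formula $\exp(\int\vol_D\,\langle D,-\rangle)$, which applies only to parallel transport in the line bundle $L$ (and is what was used for the $A$-contribution in the twist computation); for the vector bundle $V$ with its non-abelian connection one needs the identification of the super parallel transport along the standard length-$1$ superinterval over $\Pi T\hat X_0$ with $\exp(-\nabla_V^2)$, due to \textcite{arXiv:1202.2719} \mathemdash{} you cite this reference at the end but your sketch does not actually invoke it, and Corollary~\ref{cor:1} is not the relevant tool here. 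Second, your claimed intermediate expression $\exp(\tilde B)\cdot\str\exp(-(\mathbb A+\Omega)^2)\vert_{\hat X_0}$ is not correct: $\mathbb A$ is the superconnection on $L'$ and does not appear in the partition-function computation, and neither does the $1$-form $A$ that you list among the ingredients to track (it enters only the twist $Q'$, not the section $E(K_{\check x})$). The correct output is $\str(m^{-1}\circ\exp(\nabla_V^2))$ valued in $Q'$, which after applying $\exp(-\tilde B)$ from \eqref{eq:25} becomes $\str(m^{-1}\circ\exp(\nabla_V^2-B))$, i.e.\ precisely the defining formula \eqref{eq:23} for $\mathrm{ch}(\mathfrak V)$.
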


\begin{proof}
  It suffices to verify that the underlying forms on $\hat X_0$ agree.
  Our dimensional reduction procedure dictates that $E'(\mathcal L
  \circ \check x) = E(K_{\check x})$, where $K_{\check x} \in
  \mathfrak K(\mathfrak X)$ is the special $\Pi T\hat X_0$-family from
  section~\ref{sec:remind-dimens-reduct}.  By \eqref{eq:4},
  $E(K_{\check x})$ is obtained from the linear map
  \begin{equation*}
    \pi^*V \xrightarrow{\mathrm{SP}^{-1}} \pi^*V
    \xrightarrow{m^{-1}} Q' \otimes \pi^*V,
  \end{equation*}
  by taking supertrace in the $\End(\pi^*V)$ component.  (For clarity,
  we are leaving pullbacks by $p$ and $i$ implicit.)  Here,
  $\mathrm{SP}$ denotes the super parallel transport along the
  superinterval
  \begin{equation*}
    \Pi T\hat X_0 \times [0,1] \subset \Pi T\hat X_0 \times \mathbb
    R^{1\vert1} \xrightarrow{\mathrm{ev}} \hat X_0,
  \end{equation*}
  which \textcite{arXiv:1202.2719} identified with
  $\exp(-\nabla_V^2)$.  Thus, the above homomorphism of vector bundles
  on $\Pi T\hat X_0$ is given by
  \begin{equation*}
    m^{-1}\circ \exp(\nabla_V^2)
    \in C^\infty(\Pi T\hat X_0; \Hom(\pi^*V, Q' \otimes \pi^*V)).
  \end{equation*}
  Using \eqref{eq:25}, our chosen identification $\exp(-\tilde
  B)\colon Q' \to \pi^*L'$, we get
  \begin{equation*}
    \omega = \str(m^{-1} \circ \exp(\nabla_V^2 - B))
    \in \Omega^*(\hat X_0; L'),
  \end{equation*}
  which agrees with the definition \eqref{eq:23} of
  $\mathrm{ch}(\mathfrak V)$.
\end{proof}

\appendix

\section{A primitive integration theory on $\mathbb R^{1|1}$}
\label{sec:fund-theor-calc}

Integration of compactly supported sections of the Berezinian line
bundle is relatively simple to define \cite{MR1701597}.  The notion of
domains with boundary requires more care, as shown by the following
paradox, known as Rudakov's example: on $\mathbb R^{1\vert2}$ with
coordinates $t,\theta_1,\theta_2$, we consider a function $u$ with
$\partial_{\theta_1} u = \partial_{\theta_2} u = 0$.  Then
$\int_{[0,1] \times \mathbb R^{0\vert2}} [dtd\theta]\, u = 0$, but
performing the change of coordinates $t = t' + \theta_1\theta_2$ we
get
\begin{equation*}
  \int_{[0,1] \times \mathbb R^{0\vert2}} [dt'd\theta]\, u(t' + \theta_1\theta_2,\theta_1,\theta_2) =
  \int_{[0,1] \times \mathbb R^{0\vert2}} [dt d\theta]\,
  u + \theta_1\theta_2 \partial_tu = u(1) - u(0).
\end{equation*}
It turns out that the correct notion of boundary of a domain $U$ in a
supermanifold $X$ is a codimension $1\vert0$ submanifold $K
\hookrightarrow X$ whose reduced manifold is the boundary of
$\abs{U}$.  With this proviso, an integration theory featuring the
expected Stokes formula still exists \cite{MR0647158}, and we would
like to describe it concretely in a very special case.

Given $a, b\colon S\to \mathbb R^{1\vert1}$, we define the
superinterval $[b,a] \subset S \times \mathbb R^{1\vert1}$ to be the
domain with boundary prescribed by the embeddings
\begin{gather*}
  i_a\colon S \times \mathbb R^{0\vert1} \hookrightarrow S\times\mathbb
  R^{1\vert1} \xrightarrow{a\cdot} S \times\mathbb R^{1\vert1},\\
  i_b\colon  S \times \mathbb R^{0\vert1} \hookrightarrow S\times\mathbb
  R^{1\vert1} \xrightarrow{b\cdot} S \times\mathbb R^{1\vert1}.
\end{gather*}
We think of $i_a$ as the incoming and $i_b$ as the outgoing boundary
components.  To be consistent with the usual definition of
$1\vert1\EBord$, we will to assume that, modulo nilpotents, $a \geq b$
(cf.\ \textcite[definition~6.41]{MR2648897}).

The fiberwise Berezin integral of a function $u = f + \theta g \in
C^\infty(S\times\mathbb R^{1\vert1})$ on $[b,a]$ will be denoted
$\int_{[b,a]} [dtd\theta]\, u$.  Now, notice that we can always find
primitives with respect to the Euclidean vector field $D =
\partial_\theta-\theta\partial_t$.  In fact, if $G \in C^\infty(S
\times \mathbb R)$ satisfies $\partial_tG = g$, then
\begin{equation*}
  u = D(\theta f - G).
\end{equation*}
It is also clear that any two primitives differ by a constant.  We
have a fundamental theorem of calculus.

\begin{proposition}
  \label{prop:10}
  Given $u, v\in C^\infty(S \times\mathbb R^{1\vert1})$ with $u =
  (\partial_\theta-\theta\partial_t)v$ and $a, b\colon S\to \mathbb
  R^{1\vert1}$, with $a \geq b$ modulo nilpotents, we have
  \begin{equation*}
    \int_{[b,a]}[dtd\theta]\, u = v(b) - v(a).
  \end{equation*}
\end{proposition}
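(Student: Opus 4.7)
My plan is first to reduce to a special case via translation symmetry, and then to do a direct computation. Left translation by $b^{-1}$ (via the group structure on $\mathbb{R}^{1\vert1}$) is a diffeomorphism of $S \times \mathbb{R}^{1\vert1}$ sending $b$ to $0$ and $a$ to $b^{-1}a$; since $D$ is left-invariant (as emphasized in the conventions) and the Berezinian $[dt\,d\theta]$ is translation-invariant, pulling back along $L_b$ preserves both the integral $\int_{[b,a]} u\,[dt\,d\theta]$ and the hypothesis $u = Dv$, while $v(b)$ and $v(a)$ become $(L_b^*v)(0)$ and $(L_b^*v)(b^{-1}a)$. It therefore suffices to prove the statement when $b = 0$.

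With $b = 0$ and $a = (a_0, a_1) \in \mathbb{R}^{1\vert1}(S)$, I describe the domain explicitly. The outgoing boundary $i_0$ is the standard inclusion at $t = 0$, while $i_a$ sends $\theta \mapsto a \cdot (0, \theta) = (a_0 + a_1\theta,\, a_1 + \theta)$; reparameterizing by $\tau = a_1 + \theta$ identifies the image with the hypersurface $\{t = a_0 + a_1\tau\}$. Hence the fiberwise Berezin integral becomes
\[
\int_{[0,a]}[dt\,d\theta]\, u \;=\; \int d\tau \int_0^{a_0 + a_1\tau} u(t, \tau)\, dt.
\]
Writing $v = V_0(t) + \theta V_1(t)$ with $V_0, V_1 \in C^\infty(S \times \mathbb{R})$, we have $u = Dv = V_1 - \tau\partial_t V_0$.

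Finally, I compute. For the $V_1$-term, let $W$ be a classical antiderivative ($W' = V_1$); first-order Taylor expansion at the nilpotent endpoint (higher orders vanish since $a_1^2 = 0$) gives $W(a_0) + a_1\tau V_1(a_0) - W(0)$. For the $-\tau \partial_t V_0$-term, the inner integral yields $-\tau[V_0(a_0) - V_0(0)]$, the Taylor correction $-\tau \cdot a_1\tau \cdot V_0'(a_0)$ vanishing by $\tau^2 = 0$. Applying $\int d\tau$ and using super-commutativity to move $\tau$ rightmost through the odd factors leaves $-a_1 V_1(a_0) + V_0(0) - V_0(a_0)$, which matches $v(0) - v(a)$ since $v(a) = a^*v = V_0(a_0) + a_1 V_1(a_0)$. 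Undoing Step 1 produces $v(b) - v(a)$. The only real subtlety is the sign bookkeeping for products of odd quantities, but the calculation stays short because any term containing two factors of the same odd element vanishes; both parities of $v$ are handled uniformly by the same argument.
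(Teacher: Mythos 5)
Your proof is correct and follows essentially the same route as the paper's: a Berezinian-$1$ translation to normalize the boundary followed by a direct coordinate computation of the Berezin integral, with the nilpotent displacement of the endpoint absorbed by a first-order Taylor expansion. The only organizational difference is that the paper reduces to half-infinite intervals $[b,+\infty]$ via a partition of unity and straightens the $b$-boundary by the coordinate change $t'=t-b_0-b_1\theta$, whereas you translate $b$ to $0$ up front and treat the finite interval in one pass; the underlying computation is the same.
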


To clarify the meaning of the right-hand side, when using $a,b\colon S
\to \mathbb R^{1\vert1}$, etc., as arguments to a function, we
implicitly identify them with maps $S \to S \times\mathbb
R^{1\vert1}$, to avoid convoluted notation like $v(\mathrm{id}_S,b)$.

\begin{proof}
  Using partitions of unity, it suffices to prove the analogous
  statement for the half-unbounded interval $[b,+\infty]$, namely
  \begin{equation*}
    \int_{[b,+\infty]} [dtd\theta]\, u =  v(b),
  \end{equation*}
  assuming $u$ and $v$ are compactly supported.  Writing $u = f +
  \theta g$, we have $v = \theta f - G$ with $G$ the compactly
  supported primitive of $g$.  Thus,
  \begin{equation*}
    v(b) = b_1f(b_0) - G(b_0),
  \end{equation*}
  where $b_0$, $b_1$ are the components of $b$.  On the other hand the
  embedding $i_b\colon S \times \mathbb R^{0\vert1} \to S\times\mathbb
  R^{1\vert1}$ corresponding to the outgoing boundary of
  $[b,+\infty]$ is expressed, on $T$-points, as
  \begin{equation*}
    (s, \theta) \mapsto (s, b_0+ b_1\theta, \theta + b_1).
  \end{equation*}
  Thus, the domain of integration is picked out by the equation $t\geq
  b_0+ b_1\theta$.  Performing the change of coordinates $t' =
  t-b_0-  b_1\theta$, whose Berezinian is $1$, we get
  \begin{align*}
    \int_{t\geq b_0+ b_1\theta}[dtd\theta]\, u
    &= \int_{t'\geq 0} [dt'd\theta]\, f(t'+b_0+ b_1\theta)
      + \theta g(t'+b_0+ b_1\theta)\\
    &= \int_{t'\geq 0} [dt'd\theta]\,  b_1\theta \partial_tf(t'+b_0)
      + \theta g(t'+b_0)\\
    &= b_1 f(b_0) - G(b_0). \qedhere
  \end{align*}
\end{proof}

As we noticed in the proof, translations on $\mathbb R^{1\vert1}$
preserve the canonical section $[dtd\theta]$ of the Berezinian line;
the flips $\theta \mapsto -\theta$ of course do not.  Thus, an
abstract Euclidean $1\vert1$-manifold $X$ does not come with a
canonical section of $\mathrm{Ber}(\Omega^1_X)$, but the choice of a
Euclidean vector field $D$ fixes a section, which we denote $\vol_D$.
We can then restate the proposition in a coordinate-free way as
follows: for any $S$-family of superintervals $[b,a]$ with a choice of
Euclidean vector field $D$,
\begin{equation*}
\int_{[b,a]} \vol_D Du = u(b)-u(a).
\end{equation*}

\section{Gerbes, twisted vector bundles and Chern forms}
\label{sec:twisted-superconn}

A central extension of the Lie groupoid $X_1 \rightrightarrows X_0$ is
given by
\begin{enumerate}
\item a complex line bundle $L \to X_1$ with connection $\nabla_L$,
\item a (connection-preserving) isomorphism
\begin{math}
  h\colon \pr_2^*L \otimes \pr_1^*L \to c^*L
\end{math}
over the space $X_2 = X_1 \times_{X_0} X_1$ of pairs of composable
morphisms
\item a form $B \in \Omega^2(X_0)$ (called curving).
\end{enumerate}
In more friendly notation, for composable ($S$-points) $f, g \in X_1$,
$h$ is an operation
\begin{equation*}
  h_{f,g}\colon L_f \otimes L_g \to L_{f\circ g}.
\end{equation*}
The multiplication $h$ must satisfy the natural associativity
condition, and the curvature of $L$ the equation $\nabla^2_L = t^*B -
s^*B$.  Note that $dB$ is invariant, and therefore determines a form
$\Omega \in \Omega^3(\mathfrak X)$, called $3$-curvature.  Also, there
are canonical isomorphisms $L\vert_{X_0} \cong \mathbb C$ and
$L_{f^{-1}} \cong L_f^\vee$.  For better legibility, we will typically
use $L_f^\vee$ instead of $L_{f^{-1}}$.

There is an appropriate notion of Morita equivalence for central
extensions \cite[section~4.3]{MR2817778}.  Then, just like
differentiable stacks are Lie groupoids up to Morita equivalence, a
gerbe with connection $\tilde{\mathfrak X}$ over $\mathfrak X$ can be
defined as a central extension up to Morita equivalence.  Gerbes over
an orbifold $\mathfrak X$ are classified by classes in $H^3(\mathfrak
X;\mathbb Z)$, and $[\Omega]$ is the image in de Rham cohomology.

If $L \to X_1 \rightrightarrows X_0$ is a presentation of the gerbe
$\tilde{\mathfrak X}$, then an $\tilde{\mathfrak X}$-twisted vector
bundle $\mathfrak V$ is presented by
\begin{enumerate}
\item a (complex, super) vector bundle $V \to X_0$ with connection
  $\nabla_V$ and
\item an isomorphism $m\colon L \otimes s^*V \to t^*V$ of vector
  bundles with connection over $X_1$ (where the domain is endowed with
  the tensor product connection $\nabla_L \otimes 1 + 1 \otimes
  \nabla_V$)
\end{enumerate}
satisfying certain natural conditions, namely the commutativity of the
following diagrams, where $x,y,z$ and $f\colon x\to y$, $g\colon y\to
z$ denote generic ($S$-)points of $X_0$ respectively $X_1$.
\begin{equation}
  \begin{gathered}
  \label{eq:16}
  \xymatrix@C4em{
    L_g \otimes L_f \otimes V_x \ar[r]^-{\id\otimes m_{f,x}}\ar[d]_{h_{g,f}\otimes\id} 
    & L_g \otimes V_y \ar[d]^{m_{g,y}}\\
        L_{g\circ f} \otimes V_x \ar[r]^-{m_{g\circ f,
        x}}
    & V_z
  }\qquad
  \xymatrix{
    L_{\id_x} \otimes V_x \ar[r]^-{m_{\id_x,x}} \ar[d] & V_x\\
    \mathbb C \otimes V_x \ar[ur]_{\cdot}
  }
  \end{gathered}
\end{equation}

The Chern character form of $\mathfrak V$ is calculated from the above
presentation as follows:
\begin{equation}
  \label{eq:23}
  \mathrm{ch}(\mathfrak V) = \str(i^*m^{-1} \circ p^*\exp(\nabla_V^2 - B))
  \in \Omega^{\mathrm{ev}}(\Lambda\mathfrak X; L').
\end{equation}
(Recall that we write $i\colon \hat X_0 \to X_1$ for the inclusion and
$p\colon \hat X_0 \to X_0$ for the map $s\vert_{\hat X_0} =
t\vert_{\hat X_0}$.)  Let us describe the underlying $L' =
L\vert_{\hat X_0}$-valued differential form on $\hat X_0$ in more
detail.  The isomorphism $m\colon L \otimes s^*V \to t^*V$ gives us an
identification
\begin{equation*}
  \exp(\nabla_L^2) s^*\exp(\nabla_V^2) = t^*\exp(\nabla_V^2).
\end{equation*}
Using the fact that $\nabla_L^2 = t^*B - s^*B$, we get
\begin{equation*}
  s^*(\exp(\nabla_V^2 -B)) = t^*(\exp(\nabla_V^2 -B)),
\end{equation*}
so this defines an $\End(p^*V)$-valued form on $\hat X_0$.  Now,
$i^*m$ is an isomorphism $i^*L \otimes p^*V \to p^*V$, and the form
$\mathrm{ch}(\mathfrak V)$ is obtained by composing the coefficients
of $\exp(\nabla_V^2 - B)$ with
\begin{equation*}
  \End(p^*V) \xrightarrow{i^*m^{-1}} i^*L \otimes \End(p^*V)
  \xrightarrow{\mathrm{id} \otimes \mathrm{str}} i^*L.
\end{equation*}

\begin{proposition}
  This $L\vert_{\hat X_0}$-valued form defines an even, closed element
  in the complex \eqref{eq:20}.
\end{proposition}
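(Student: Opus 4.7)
My plan is to verify three things for $\omega := \str(i^*m^{-1} \cdot p^*\exp(\nabla_V^2 - B))$: evenness, closedness under the twisted differential $\nabla' + \Omega\wedge\cdot$, and invariance under the inertia groupoid $\hat X_1 \rightrightarrows \hat X_0$, so that $\omega$ descends from $\hat X_0$ to $\Lambda\mathfrak X$. Evenness is immediate, since $\nabla_V^2 - B$ is an even $\End(V)$-valued form and $i^*m^{-1}$ is an even section of $L'\otimes\End(p^*V)$.

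For closedness, I would combine two ingredients. The Bianchi identity gives $\nabla_V(\nabla_V^2) = 0$, and since $B$ is a scalar form it commutes with the connection, so $\nabla_V B = dB = \Omega$, whence $\nabla_V(\nabla_V^2 - B) = -\Omega$. Because $\Omega$ is a scalar central form of odd degree, this propagates through the exponential to give
\begin{equation*}
  \nabla_V \exp(\nabla_V^2 - B) = -\Omega\cdot\exp(\nabla_V^2 - B).
\end{equation*}
Next, the connection-preserving property of $m\colon L \otimes s^*V \to t^*V$ on $X_1$ implies that $i^*m$ is parallel for the induced connection on $\Hom(L' \otimes p^*V, p^*V)$, since on $\hat X_0$ we have $s\vert_{\hat X_0} = t\vert_{\hat X_0} = p$; hence $i^*m^{-1}$ is also parallel. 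Applying Leibniz for the tensor product connection $\tilde\nabla$ on $L' \otimes \End(p^*V)$ then yields
\begin{equation*}
  \tilde\nabla\bigl(i^*m^{-1}\cdot p^*\exp(\nabla_V^2-B)\bigr) = -p^*\Omega\cdot i^*m^{-1}\cdot p^*\exp(\nabla_V^2-B).
\end{equation*}
Since supertrace commutes with covariant differentiation, we obtain $\nabla'\omega = -\Omega\wedge\omega$, i.e.\ $(\nabla' + \Omega\wedge)\omega = 0$.

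For invariance, the plan is to show $s^*\omega = t^*\omega$ in $\Omega^*(\hat X_1; L')$ under the connecting isomorphism defining $L'$, as recalled in section~\ref{sec:tu-xu-de-rham}. The inputs are: first, the identity $m\circ s^*\exp(\nabla_V^2 - B)\circ m^{-1} = t^*\exp(\nabla_V^2 - B)$ on $X_1$, which follows from $\nabla_L^2 = t^*B - s^*B$ and the connection-preservation of $m$ (this is the computation sketched just before \eqref{eq:23}); second, the cocycle relations \eqref{eq:16} for $m$; and third, the explicit formula \eqref{eq:14} for the connecting isomorphism $L_g \to L_{fgf^{-1}}$ on $\hat X_1$. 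A direct diagram chase uses the $m_f$'s to conjugate $\exp(\nabla_V^2 - B)_{x'}$ into $\exp(\nabla_V^2 - B)_x$, and the two expressions for $m_{g'\circ f}$ coming from $g'f = fg$ collect the resulting $L_f$ and $L_{f^{-1}}$ factors into exactly the ratio of $h$'s appearing in \eqref{eq:14}.

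The main obstacle will be this last part: although conceptually it is forced by the compatibilities defining $\mathfrak V$, it requires careful bookkeeping of the twisted bundle cocycle and cyclicity of the supertrace. Parts (a) and (b), by contrast, are entirely formal once one notices that $i^*m$ is parallel.
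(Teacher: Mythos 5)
Your proposal is correct and is precisely the direct verification that the paper's proof invokes (``this is easy to check directly'') without writing out: the identity $\nabla_V\exp(\nabla_V^2-B)=-\Omega\,\exp(\nabla_V^2-B)$ from Bianchi plus $dB=\Omega$, the parallelism of $i^*m$ on $\hat X_0$ (where $s=t=p$), and the conjugation/cocycle bookkeeping for invariance under $\hat X_1\rightrightarrows\hat X_0$ are exactly the intended ingredients. The paper also records an alternative, indirect route that you might note: by theorem~\ref{thm:3} and proposition~\ref{prop:2} the form in question is the image of the dimensionally reduced field theory $E'$, which is a priori an even, $\mathbb A$-closed element of the complex \eqref{eq:20}, so no computation is needed at all.
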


\begin{proof}
  This is easy to check directly. It also follows from
  theorem~\ref{thm:3} and proposition~\ref{prop:2}, since we know a
  priori that the form $\omega$ in the statement of that proposition
  is even and closed with respect to the relevant differential.
\end{proof}

\begin{remark}
  If $X_1 \rightrightarrows X_0$ is the groupoid of a finite group
  action on a manifold and the central extension is trivial, then
  $\mathfrak V$ is just the data of an equivariant vector bundle.  In
  this case, $\mathrm{ch}(\mathfrak V)$ represents the equivariant
  Chern character of \textcite{MR928402}.  If $X_1\rightrightarrows
  X_0$ is Morita equivalent to a manifold, then $\mathfrak V$ is what
  is traditionally called a twisted vector bundle, and
  $\mathrm{ch}(\mathfrak V)$ agrees with the definition of
  \textcite{MR1911247}, \textcite{arXiv:1602.02292}, and others.
\end{remark}

\begin{remark}
  Finite-dimensional twisted vector bundles only exist when the
  twisting gerbe represents a torsion class.  Thus, it would be
  interesting to allow a more general target category and investigate
  $1\vert1\EFT$s twisted by non-torsion classes.
\end{remark}

\printbibliography

\end{document}